\newtheorem{theorem}{Theorem}[section]
 \newtheorem{lemma}[theorem]{Lemma}
 \newtheorem{prop}[theorem]{Proposition}
 \theoremstyle{definition}
 \newtheorem{definition}[theorem]{Definition}
 \newtheorem{remark}[theorem]{Remark}
 \theoremstyle{remark}
\renewcommand{\phi}{\varphi}
\newcommand{\nomi}{\mathbf{i}}
\newcommand{\nomj}{\mathbf{j}}
\newcommand{\nomk}{\mathbf{k}}
\newif\ifmargincoms
\definecolor{amethyst}{rgb}{0.6, 0.4, 0.8}
\definecolor{dodgerblue}{rgb}{0.12, 0.56, 1.0}
\newcommand{\At}{\mathit{At}}
\newcommand{\A}{\mathbf{A}}
\newcommand{\cut}[1]{}
\title{Algebraic semantics for hybrid logics}
\author{Willem Conradie and Claudette Robinson\\
{\small Dept. Pure and Applied Mathematics, University of Johannesburg}\\
{\small wconradie@uj.ac.za \quad claudette.robinson574@gmail.com}}
\date{}
\begin{document}

\maketitle

\begin{abstract}
We introduce hybrid algebras as algebraic semantics for hybrid languages with nominals and, possibly, the satisfaction operator. We establish a duality between hybrid algebras and the descriptive two-sorted general frames of Ten Cate. We show that all axiomatic extensions of the basic hybrid logics, with or without the satisfaction operator, are complete with respect to their classes of hybrid algebras. Moreover, we show that by adding the usual non-orthodox rules to these logics, they become complete with respect to their classes of permeated hybrid algebras, corresponding to strongly descriptive two-sorted general frames.
\end{abstract}

{\it Keywords:} Hybrid logic, hybrid algebras, algebraic semantics, descriptive two-sorted  general frames, completeness

\section*{Introduction}
Hybrid logics (\cite{Gargov:Goranko:1993, BlckbrnManifesto, areces05:_hybrid_logic}) extend modal logic with a second sort of atomic formulas, known as \emph{nominals}, which are constrained to range over singleton subsets of Kripke frames and thus act as names for states in models. The expressive power of hybrid languages is further enhanced by the addition of various other connectives which  capitalize on this naming power of the nominals.

Historically, hybrid logics can be traced back to the work of Arthur Prior \cite{Prior57, Prior67}. They were given their present form by Gargov and Goranko \cite{Gargov:Goranko:1993}, and have seen rapid development since the the late 1990s.

In recent years modal logic has benefited greatly from the use of algebraic methods and the utilization of the duality between its relational and algebraic semantics (see e.g. \cite{Goldblatt1989173} and \cite{VenemaChapter}). Strangely enough, given that modal and hybrid logic are such close cousins, there has been very little work on algebraic semantics for the latter. The only work in this direction of which we are aware is that of Litak \cite{Litak06:relmics}, who provides algebraic semantics for a very expressive hybrid language, which contains the `down arrow' binder. The algebras introduced by Litak are akin the cylindric algebras used as semantics for first-order logic.

In this paper we consider hybrid languages without binders and, as a result, our algebras are much simpler. In fact, the algebraic semantics we introduce will be based on structures we call \emph{hybrid algebras}, which are adapted versions of the Boolean algebras with operators (BAOs) which form the usual algebraic semantics of modal logic.

We consider three hybrid languages, namely, the language obtained by adding nominals to the basic modal language and the two languages obtained by additionally adding, respectively, the satisfaction operator $@$ and the universal modality $\mathsf{A}$. Our main results are general completeness results for axiomatic extensions of the basic logics associated with these languages, with respect to the corresponding classes of hybrid algebras.

Stepping through the looking glass of duality, the relational duals of hybrid algebras are already to be found in the PhD thesis of Balder ten Cate \cite{TenCate:Phd:Thesis}, in the form of \emph{descriptive two sorted  general frames}. Our inspiration for the definitions of hybrid algebras comes from these two sorted general frames. In fact, the completeness results presented in this paper could alternatively be obtained by properly establishing the duality between hybrid algebras and descriptive two sorted  general frames, and then appealing to ten Cate's completeness results in \cite{TenCate:Phd:Thesis}. However, in our opinion, there is much value in presenting these results purely algebraically: firstly, it shows that a purely algebraic approach to the semantics of hybrid logic is feasible and fruitful; secondly, it provides the opportunity to develop and showcase a number of techniques and constructions on hybrid algebras that will prove very useful when this semantics is used to derive other results for hybrid logics like Sahlqvist-type theorems \cite{ConRob2015} and finite model properties.

The paper is organized as follows: section \ref{sec:prelim} collects the necessary preliminaries on the syntax, relational semantics and axiomatics of the logics under consideration. In Section \ref{sec:AlgbraicSem} we introduce hybrid algebras and permeated hybrid algebras as semantics for hybrid languages and prove some basic preliminary propositions.  In Section \ref{sec:algebraic:comp} we present and prove our main results, namely the completeness of all axiomatic extensions of our basic logics with respect to the corresponding classes of (permeated) hybrid algebras. We conclude in Section \ref{sec:Conc}.

\section{Preliminaries} \label{sec:prelim}

In this section we collect some essential preliminaries.

\paragraph{Syntax.} Fix countably infinite disjoint sets \textsf{PROP} and \textsf{NOM} of \emph{propositional variables} (denoted $p, q, r, \ldots$) and \emph{nominals} (denoted $\nomi, \nomj, \nomk, \ldots$), respectively. Then the syntax of the languages $\mathcal{H}$, $\mathcal{H}(@)$ and $\mathcal{H}(\mathsf{E})$ is defined as follows:
\begin{eqnarray*}
\phi & ::= & \bot \mid p \mid \nomj \mid \neg\phi \mid \phi \wedge \psi \mid \Diamond \phi \\
\phi & ::= & \bot \mid p \mid \nomj \mid \neg\phi \mid \phi \wedge \psi \mid \Diamond \phi \mid @_{\nomj}\phi \\
\phi & ::= & \bot \mid p \mid \nomj \mid \neg\phi \mid \phi \wedge \psi \mid \Diamond \phi \mid \mathsf{E}\phi
\end{eqnarray*}
Here $p \in \textsf{PROP}$ and $\nomj \in \textsf{NOM}$. The Boolean connectives $\top, \vee, \rightarrow$ and $\leftrightarrow$ are defined as usual, and as usual, $\Box \phi := \neg\Diamond\neg \phi$ and $\mathsf{A}\phi := \neg\mathsf{E}\neg\phi$.

\paragraph{Relational semantics.} Like modal languages, the languages $\mathcal{H}$, $\mathcal{H}(@)$ and $\mathcal{H}(\mathsf{E})$ can be interpreted in various related structures. We will be concerned with Kripke frames, two sorted general frames, models, and hybrid algebras (to be defined later). A (\emph{Kripke}) \emph{frame} is a pair $\frak{F} = (W, R)$ such that $W$ is a non-empty set (called the \emph{domain}) of objects called states, and $R \subseteq W^2$ is a binary \emph{accessibility relation} on $W$. A (\emph{Kripke}) \emph{model} based on a frame $\frak{F} = (W, R)$ is a pair $\frak{M}$ = $(\frak{F}, V)$, where $V$ is a \emph{valuation} $V: \textsf{PROP} \cup \textsf{NOM} \rightarrow \mathcal{P}(W)$ such that $V(\nomi)$ is a singleton subset of $W$.

The \emph{truth} of a formula $\phi$ at a state $w$ in a Kripke model $\frak{M} = (W, V)$, denoted $\frak{M}, w \Vdash \phi$, is defined as usual. In particular, $\frak{M}, w \Vdash \nomi$ iff $V(\nomi) = \{ w\}$; $\frak{M}, w \Vdash \Diamond\phi$ iff there exists $v$ such that $wRv$ and $\frak{M}, v \Vdash \phi$; $\frak{M}, w \Vdash @_{\nomj}\phi$ iff $\frak{M}, v \Vdash \phi$ and $V(\nomj) = \{v\}$; $\frak{M}, w \Vdash \mathsf{E}\phi$ iff there is some state $v \in W$ such that $\frak{M}, v \Vdash \phi$. Global truth in a model and local and global validity in frames are defined as usual.

Recall that a general frame for the basic modal language (see e.g.  \cite{BdRV01}) is a triple $\frak{g} = (W, R, A)$ such that $(W, R)$ is a Kripke frame, $A$ is a non-empty collection of subsets of $W$ (called \textit{admissible subsets}) which is closed under finite intersection, relative complement, and under the operation $\langle R\rangle$ defined by $\langle R\rangle X := \{w \in W \mid \exists v \in X \mbox{ such that } wRv \}$. An \emph{admissible valuation} on $\frak{g}$ is a map $V: \mathsf{PROP} \rightarrow A$. We can think of Kripke frames as the special case of general frames for which $A = \mathcal{P}(W)$.

In order to interpret hybrid languages on general frames, suitable provision must be made for the nominals. The \emph{two-sorted general frames} introduced by Ten Cate in \cite{TenCate:Phd:Thesis} do this in a natural way. Specifically, a \emph{two-sorted general frame} is a structure $\frak{g} = (W, R, A, B)$, where $(W, R, A)$ is a general frame, and $B \subseteq W$ is a non-empty set  such that $\{ w \} \in A$ for all $w \in B$, called the  \emph{admissible points of $\frak{g}$}. An \emph{admissible valuation} on $\frak{g}$ is then a map $V: \mathsf{PROP} \cup \mathsf{NOM} \rightarrow A$ such that $V(\nomi) \in \{ \{ w \} \mid w \in B \}$ for each nominal $\nomi \in \mathsf{NOM}$. A \textit{model based on $\frak{g} = (W, R, A, B)$} is a Kripke model $(W, R, V)$, where $V$ is an admissible valuation on $\frak{g}$. A hybrid formula $\phi$ is valid at a point $w$ in $\frak{g}$, written $\frak{g}, w \Vdash \phi$, if $(W,R,V), w \Vdash \phi$ for every admissible valuation $V$ on $\frak{g}$.

Recall (from e.g. \cite{BdRV01}) that a general frame $(W, R, A)$ is said to be \emph{differentiated}, if for all $w, v \in W$ with $w \neq v$, there exists an $a \in A$ such that $w \in a$ and $v \notin a$. A general frame $(W, R, A)$ is \emph{tight}, if for all $u, v \in W$, it is the case that $uRv$ iff $\forall a \in A(v \in a \Longleftrightarrow u \in \langle R\rangle a)$. It is \emph{compact}, if $\bigcap A_0 \neq \varnothing$ for every subset $A_0$ of $A$ which has the finite intersection property. We say that $(W, R, A)$ is \emph{descriptive} if it is differentiated, tight and compact.

Following \cite{TenCate:Phd:Thesis}, we say that a two-sorted general frame $(W, R, A, B)$ is \emph{descriptive} if the associated general frame $(W, R, A)$ is descriptive. Finally, $(W, R, A, B)$ is \emph{strongly descriptive} if it is descriptive, and satisfies the following two conditions: (i) for all $a \in A$, if $a \neq \varnothing$, then there is a $w \in B$ such that $w \in a$; (ii) for all $a \in A$ and $u \in B$, if $\{v \in a \mid uRv\} \neq \varnothing$, then there is a $w \in B$ such that $w \in a$ and $uRw$.

\paragraph{Logics.} We now recall axiomatizations of the minimal hybrid logics in the languages $\mathcal{H}$, $\mathcal{H}(@)$ and $\mathcal{H}(\mathsf{E})$. The systems given here are based on those in \cite{BdRV01} and \cite{TenCate:Phd:Thesis}. We will use the notation $\Diamond^{n}$ with $n \in \mathbb{N}$ to denote a string of $n$ consecutive $\Diamond$'s. The notation $\Box^{n}$ is defined similarly.

The \textit{minimal normal hybrid logic} $\textbf{H}$ is the smallest set of $\mathcal{H}$-formulas containing all propositional tautologies and the axioms in Table \ref{Table:Axiomatization:H}, and which is closed under the inference rules in Table \ref{Table:Axiomatization:H}, except for ($\mathit{Name}$) and ($\mathit{Paste}$). $\textbf{H}^{+}$ is defined similarly, closing in addition under ($\mathit{Name}$) and ($\mathit{Paste}$). If $\Sigma$ is a set of $\mathcal{H}$-formulas, then $\mathbf{H}\oplus\Sigma$ and $\mathbf{H}^{+}\oplus\Sigma$ are the normal hybrid logics \emph{generated} by $\Sigma$.

By \emph{sorted substitution} we mean uniform substitution of formulas for propositional variables and nominals for nominals.

\begin{table}[h!]
\begin{center}
\begin{tabular}{|ll|}
\hline
Axioms: & \\
& \\
($\mathit{Taut}$) & $\vdash \phi$ for all classical propositional tautologies $\phi$. \\
($\mathit{Dual}$) & $\vdash \Diamond p \leftrightarrow \neg\Box\neg p$ \\
($K$) & $\vdash \Box(p \rightarrow q) \rightarrow (\Box p \rightarrow \Box q)$ \\
($\mathit{Nom}$) & $\vdash \Diamond^n(\nomi \wedge p) \rightarrow \Box^m(\nomi \to p)$ for all $n, m \in \mathbb{N}$. \\
& \\
Rules of inference:  & \\
& \\
($\mathit{Modus}$ $\mathit{ponens}$) & If $\vdash \phi \to \psi$ and $\vdash \phi$, then
$\vdash \psi$. \\
($\mathit{Sorted}$ $\mathit{substitution}$) & $\vdash \phi'$ whenever $\vdash \phi$, where $\phi'$ is obtained from $\phi$ by sorted \\
& substitution. \\
($\mathit{Nec}$) & If $\vdash \phi$, then $\vdash \Box \phi$. \\
($\mathit{NameLite}$) & If $\vdash \neg\nomi$, then $\vdash \bot$. \\
($\mathit{Name}$) & If $\vdash \nomi \to \phi$, then $\vdash \phi$ for $\nomi$ not occurring in $\phi$. \\
($\mathit{Paste}$) & If $\vdash \Diamond^{n}(\nomi \wedge \Diamond(\nomj \wedge \phi)) \to \psi$, the $\vdash \Diamond^{n}(\nomi \wedge \Diamond\phi) \to \psi$ for $n \in \mathbb{N}$, \\
& $\nomi \neq \nomj$, and $\nomj$ not occurring in $\phi$ and $\psi$. \\
\hline
\end{tabular}
\end{center}
\caption{Axioms and inference rules of $\textbf{H}$ and $\textbf{H}^{+}$} \label{Table:Axiomatization:H}
\end{table}

The role of ($\mathit{NameLite}$) is to render logics that derive $\neg\nomj$ for some nominal $\nomj$, inconsistent, reflecting
the fact that $\neg\nomj$ is not valid on any frame. As is not hard to see, without ($\mathit{NameLite}$), the logic $\mathbf{H}\oplus\{\neg\nomj\}$ would be consistent.

The rules ($\mathit{Name}$) and ($\mathit{Paste}$), as well as their $@$ and $\mathsf{E}$ versions, are known as `non-orthodox'
rules because of their syntactic side-conditions. It is well known that these rules are admissible in the minimal hybrid logics obtained by omitting them. However, they are needed in order to obtain the well-known general completeness result regarding extensions
with pure axioms (see \cite{BdRV01} and \cite{TenCate:Phd:Thesis}).

The \emph{minimal normal hybrid logic} $\textbf{H}(@)$ is the smallest set of $\mathcal{H}(@)$-formulas containing all propositional tautologies and the axioms in Table \ref{Table:Axiomatization:H@}, and which is closed under the inference rules in Table
\ref{Table:Axiomatization:H@}, except for ($\mathit{Name}_@$) and ($\mathit{BG}_@$). $\textbf{H}^{+}(@)$ is defined similarly, closing in addition under ($\mathit{Name}_@$) and ($\mathit{BG}_@$). If $\Sigma$ is a set of $\mathcal{H}(@)$-formulas, then $\mathbf{H}\oplus\Sigma$ and $\mathbf{H}^{+}\oplus\Sigma$ are the normal hybrid logics \emph{generated} by $\Sigma$.

\begin{table}[h!]
\begin{center}
\begin{tabular}{|ll|}
\hline

Axioms: & \\
& \\
($\mathit{Taut}$) & $\vdash \phi$ for all classical propositional tautologies $\phi$. \\
($K$) & $\vdash \Box(p \rightarrow q) \rightarrow (\Box p \rightarrow \Box q)$ \\
($\mathit{Dual}$) &  $\vdash \Diamond p \leftrightarrow \neg \Box \neg p$ \\
($K_{@}$) & $\vdash @_{\nomj}(p \to q) \rightarrow (@_{\nomj}p \to @_{\nomj}q)$ for all $\nomj \in \mathsf{NOM}$. \\
($\mathit{Selfdual}$) & $\vdash \neg @_{\nomj}p \leftrightarrow @_{\nomj}\neg p$ for all $\nomj \in \mathsf{NOM}$. \\
($\mathit{Intro}$) & $\vdash \nomj \wedge p \to @_{\nomj}p$ for all $\nomj \in \mathsf{NOM}$. \\
($\mathit{Ref}$) & $\vdash @_{\nomj}\nomj$ for all $\nomj \in \mathsf{NOM}$. \\
($\mathit{Agree}$) & $\vdash @_{\nomi}@_{\nomj}p \to @_{\nomj}p$ for all $\nomi, \nomj \in \mathsf{NOM}$. \\
($\mathit{Back}$) & $\vdash \Diamond @_{\nomj}p \to @_{\nomj}p$ for all $\nomj \in \mathsf{NOM}$. \\
& \\
Rules of inference:  & \\
& \\
($\mathit{Modus}$ $\mathit{ponens}$) & If $\vdash \phi \to \psi$ and $\vdash \phi$, then
$\vdash \psi$. \\
($\mathit{Sorted}$ $\mathit{substitution}$) & $\vdash \phi'$ whenever $\vdash \phi$, where $\phi'$ is obtained from $\phi$ by sorted \\
& substitution. \\
($\mathit{Nec}$) & If $\vdash \phi$, then $\vdash \Box \phi$. \\
($\mathit{Nec}_@$) & If $\vdash \phi$, then $\vdash @_{\nomj}\phi$. \\
($\mathit{Name}_@$) & If $\vdash @_{\nomj}\phi$, then $\vdash \phi$ for $\nomj$ not occurring in $\phi$. \\
($\mathit{BG}_@$) & If $\vdash @_{\nomi}\Diamond\nomj \wedge @_{\nomj}\phi \to \psi$, then $\vdash @_{\nomi}\Diamond\phi\to \psi$ \\
& for $\nomj \neq \nomi$ and $\nomj$ not occurring in $\phi$ and $\psi$. \\
\hline
\end{tabular}
\end{center}
\caption{Axioms and inference rules of $\textbf{H}(@)$ and $\textbf{H}^{+}(@)$} \label{Table:Axiomatization:H@}
\end{table}

The \textit{minimal normal hybrid logic} $\textbf{H}(\mathsf{E})$ is the smallest set of $\mathcal{H}(\mathsf{E})$-formulas containing all propositional tautologies and the axioms in Table \ref{Table:Axiomatization:HE}, and which is closed under the inference rules in Table \ref{Table:Axiomatization:HE}, except for ($\mathit{Name}_{\mathsf{E}}$), ($\mathit{BG}_{\mathsf{E}\Diamond}$) and ($\mathit{BG}_{\mathsf{E}\mathsf{E}}$). $\textbf{H}^{+}(\mathsf{E})$ is defined in the same way, closing in addition under ($\mathit{Name}_{\mathsf{E}}$), ($\mathit{BG}_{\mathsf{E}\Diamond}$) and ($\mathit{BG}_{\mathsf{E}\mathsf{E}}$). If $\Sigma$ is a set of $\mathcal{H}(\mathsf{E})$-formulas, then $\mathbf{H}(\mathsf{E})\oplus\Sigma$ and $\mathbf{H}^{+}(\mathsf{E})\oplus\Sigma$ are the normal hybrid logics generated by $\Sigma$.

\begin{table}[h!]
\begin{center}
\begin{tabular}{|ll|}
\hline

Axioms: & \\
& \\
($\mathit{Taut}$) & $\vdash \phi$ for all classical propositional tautologies. \\
($K$) & $\vdash \Box(p \rightarrow q) \rightarrow (\Box p \rightarrow \Box q)$ \\
($\mathit{Dual}$) &  $\vdash \Diamond p \leftrightarrow \neg \Box \neg p$ \\
($K_{\mathsf{A}}$) & $\vdash \mathsf{A}(p \to q) \rightarrow (\mathsf{A}p \to \mathsf{A}q)$ \\
($\mathit{Dual}_{\mathsf{A}}$) & $\vdash \mathsf{E} p \leftrightarrow \neg \mathsf{A} \neg p$ \\
($\mathit{Incl}_{\nomj}$) & $\vdash \mathsf{E}\nomj$ \\
($\mathit{Nom}_{\mathsf{E}}$) & $\vdash \textsf{E}(\nomi \wedge p) \to \textsf{A}(\nomi \to p)$ \\
($T\mathsf{E}$) & $\vdash p \to \textsf{E}p$ \\
($4\mathsf{E}$) & $\vdash \textsf{E}\textsf{E}p \to \textsf{E}p$ \\
($B\mathsf{E}$) & $\vdash p \to \textsf{AE}p$ \\
($\mathit{Incl}_{\Diamond}$) & $\vdash \Diamond p \to \textsf{E}p$ \\
& \\
Rules of inference:  & \\
& \\
($\mathit{Modus}$ $\mathit{ponens}$) & If $\vdash \phi \to \psi$ and $\vdash \phi$, then
$\vdash \psi$. \\
($\mathit{Sorted}$ $\mathit{substitution}$) & $\vdash \phi'$ whenever $\vdash \phi$, where $\phi'$ is obtained from $\phi$ by sorted \\
& substitution. \\
($\mathit{Nec}$) & If $\vdash \phi$, then $\vdash \Box \phi$. \\
($\mathit{Nec}_\mathsf{A}$) & If $\vdash \phi$, then $\vdash \mathsf{A}\phi$. \\
($\mathit{Name}_{\mathsf{E}}$) & If $\vdash \nomi \to \phi$, then $\vdash \phi$ for $\nomi$ not occurring in $\phi$. \\
($\mathit{BG}_{\mathsf{E}\Diamond}$) & If $\vdash \mathsf{E}(\nomi \wedge \Diamond\nomj) \wedge \mathsf{E}(\nomj \wedge \phi) \to \psi$, then $\vdash \mathsf{E}(\nomi \wedge \Diamond\phi) \to \psi$ for \\
& $\nomi \neq \nomj$ and $\nomj$ not occurring in $\phi$ and $\psi$. \\
($\mathit{BG}_{\mathsf{E}\mathsf{E}}$) & If $\vdash \mathsf{E}(\nomi \wedge \mathsf{E}\nomj) \wedge \mathsf{E}(\nomj \wedge \phi) \to \psi$, then $\vdash \mathsf{E}(\nomi \wedge \mathsf{E}\phi) \to \psi$ for \\
& $\nomi \neq \nomj$ and $\nomj$ not occurring in $\phi$ and $\psi$. \\
\hline
\end{tabular}
\end{center}
\caption{Axioms and inference rules of $\mathbf{H}(\mathsf{E})$ and $\textbf{H}^{+}(\mathsf{E})$} \label{Table:Axiomatization:HE}
\end{table}

\begin{remark}
One naturally wonders how the logics with and without the additional inference rules compare. The minimal hybrid logics $\mathbf{H}$ and $\mathbf{H}^{+}$ have the same theorems, since both are sound and strongly complete with respect to the class of all frames, see e.g. \cite{BdRV01}. This picture changes when we extend these logics with additional axioms. Consider, for example, the set $\Sigma = \{\nomj \to \Box\bot\}$ and the formula $\phi = \Diamond\top$. Let $\frak{g} = (W, R, A, B)$ be a descriptive two-sorted  general frame with $W = \{u, v\}$, $R = \{(u,u)\}$, $A = \mathcal{P}(W)$ and $B = \{v\}$ (see Figure \ref{Comparison:H:H+}). Then $\frak{g}$ validates the members of $\Sigma$, and furthermore, $\Diamond\top$ is satisfied at $u$. By the soundness of $\mathbf{H}\oplus\Sigma$ with respect to its class of descriptive two-sorted  general frames (see above), we have $\mathbf{H}\oplus\Sigma \not\vdash \Box\bot$. On the other hand, by applying the ($\mathit{Name}$) rule to $\nomj \to \Box\bot$, we see that $\mathbf{H}^+\oplus\Sigma \vdash \Box\bot$.

Analogous arguments, using the same two-sorted descriptive general frame $\frak{g}$, show that $\mathbf{H}(@)\oplus \{@_{\nomj }\Box \bot \} \not \vdash \Box \bot$ while $\mathbf{H}^{+}(@)\oplus \{@_{\nomj }\Box \bot \} \vdash \Box \bot$ and that $\mathbf{H}(\mathsf{E})\oplus \{\nomj \to \Box\bot \} \not \vdash \Box \bot$ while $\mathbf{H}^{+}(\mathsf{E})\oplus \{\nomj \to \Box\bot\} \vdash \Box \bot$. 

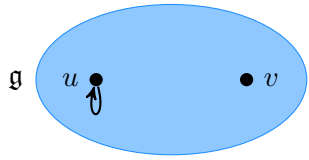
\begin{figure}[h!]
\centering
\begin{tikzpicture}[[->,>=stealth', node/.style={circle, draw, fill=black}, scale=1]

\draw[draw=dodgerblue, fill=dodgerblue!50] (0,0) ellipse (1.8 and 1);

\foreach \nom/\x/\y/\pos in {u/-1/0/left, v/1/0/right}
{
\node[circle,fill=black!100,minimum size=5pt,inner sep=0pt, label=\pos:$\nom$] (N-\nom)  at (\x, \y) {};
}

\draw[line width=0.8pt, ->, loop below=50] (N-u) to node {} (N-u);

\node (F) at (-1.7, 0) [label=left:$\frak{g}$] {};

\end{tikzpicture}
\caption{A two-sorted general frame satisfying the formula $\Diamond\top$.} \label{Comparison:H:H+}
\centering
\end{figure}
\end{remark}

\paragraph{General completeness with respect to two sorted general frames.} Ten Cate \cite{TenCate:Phd:Thesis} has shown that, for every set $\Sigma$ of $\mathcal{H}$-formulas (respectively, $\mathcal{H}(@)$-formulas, $\mathcal{H}(\mathsf{E})$-formulas), the logic $\mathbf{H}\oplus\Sigma$ (respectively, $\mathbf{H}(@)\oplus\Sigma$, $\mathbf{H}(\mathsf{E})\oplus\Sigma$) is strongly complete with respect to the class of descriptive two-sorted general frames validating its axioms. Moreover,  $\mathbf{H}^{+}\oplus\Sigma$ (respectively, $\mathbf{H}^{+}(@)\oplus\Sigma$, $\mathbf{H}^{+}(\mathsf{E})\oplus\Sigma$) is strongly complete with respect to the class of \emph{strongly} descriptive two-sorted general frames validating its axioms.

\paragraph{Boolean algebras with operators.}

Boolean algebras with additional operators (BAOs) offer a nuatural and standard interpretation of  modal languages.  A \emph{Boolean algebra with operator} is an algebra $\mathbf{A} = (A, \wedge, \vee, \neg, \bot, \top, f)$
such that $(A, \wedge, \vee, \neg, \bot, \top)$ is a Boolean algebra and $f$ is an \emph{operator}, i.e., a function from $A$ to $A$ satisfying the following:
\begin{description}

\item[$(\mathit{normality})$] \index{normality} $f(\bot) = \bot$, and

\item[$(\mathit{additivity})$] \index{additivity} $f(a \vee b) = f(a) \vee f(b)$.

\end{description}

An operator $f$ on a Boolean algebra is \emph{monotonic}, if $a \leq b$ implies $f(a) \leq f(b)$. All operators are monotonic. To see this, assume $a \leq b$. Then $a \vee b = b$, so $f(a) \vee f(b) = f(a \vee b) = f(b)$, which means $f(a) \leq f(b)$. We say that operators have the property of \emph{monotonicity}.

Let $\mathbf{A} = (A, \wedge, \vee, \neg, \bot, \top, f)$ be a BAO. An \emph{assignment} on $\mathbf{A}$ is a function $v$: $\mathsf{PROP} \to A$ associating an element of $A$ with each propositional variable in $\mathsf{PROP}$. Given such an assignment $v$, we calculate the \emph{meaning} $\tilde{v}(t)$ of a term $t$ as follows: $\tilde{v}(\bot) = \bot, \tilde{v}(p) = v(p), \tilde{v}(\neg \psi) = \neg\tilde{v}(\psi), \tilde{v}(\psi_1 \wedge \psi_2) = \tilde{v}(\psi_1) \wedge \tilde{v}(\psi_2)$ and $\tilde{v}(\Diamond\psi) = f(\tilde{v}(\psi))$.

We say that an equation $\phi \approx \psi$ is \emph{true} in a BAO $\mathbf{A}$ (denoted $\mathbf{A} \models \phi \approx \psi$), if for all assignments $\theta$, $\tilde{\theta}(\phi) = \tilde{\theta}(\psi)$.

Every normal modal logic is sound and complete with respect to the class of all BAOs which validate its axioms (see e.g. \cite{BdRV01}). This is no surprise as every normal logic is also sound and strongly complete with respect to its class of descriptive general frames \cite{Gold76}, and BAOs and descriptive general frames are duals of each other (see e.g. \cite{BdRV01}).

\paragraph{Canonical extensions.} Recall that the \emph{canonical extension} of a BAO $\A$ (see e.g., \cite{VenemaChapter}) is complete and atomic and is the unique superalgebra $\A^{\delta}$ of $\A$ satisfying:

\begin{description}

\item[(density)] every element of $\A^{\delta}$ can be expressed as both a join of meets and as a meet of joins of elements of $\A$, and

\item[(compactness)] for all subsets $S \subseteq A$ and $T \subseteq A$, if $\bigwedge S \leq \bigvee T$ in $\A^{\delta}$, then there exist finite sets $S_0 \subseteq S$ and $T_0 \subseteq T$ such that $\bigwedge S_0 \leq \bigvee T_0$.
\end{description}

\noindent
Arbitrary meets (joins) of elements of $\A$ form the \emph{closed} (\emph{open}) \emph{elements} of $\A^{\delta}$. It is not difficult to see that $\At \A \subseteq \At \A^{\delta}$.

\paragraph{Adjoint pairs.} In our general completeness proofs later we will give algebraic constructions that make crucial use of the adjoint of $\Box$, so let us recall some relevant preliminaries on adjoints. In what follows, $\mathbf{A}$ and $\mathbf{B}$ are two complete BAOs. The monotone maps $f$: $\mathbf{A} \to \mathbf{B}$ and $g$: $\mathbf{B} \to \mathbf{A}$ form an \emph{adjoint pair} (denoted $f \dashv g$), if for all $a$ in $\mathbf{A}$ and $b$ in $\mathbf{B}$, $f(a) \leq b$ iff $a \leq g(b)$. If $f \dashv g$, $f$ is called the \emph{left adjoint} of $g$, while $g$ is called the \emph{right adjoint} of $f$.

An important property of adjoint pairs is that if a map is completely join-preserving (meet-preserving), then we can compute its right (left) adjoint pointwise from the map itself and the order relation on the BAO: for monotone maps $f$: $A \to B$ and $g$: $B \to A$ such that $f \dashv g$, $f(a) = \bigwedge\{b \in B \mid a \leq g(b)\}$ and $g(b) = \bigvee\{a \in A \mid f(a) \leq b\}$. Moreover, for any map $f$: $A \to B$, $f$ is completely join-preserving (completely meet-preserving) iff it has a right adjoint (left adjoint). For the proofs of this see \cite{DaPriest02}.

We know that $\mathbf{A}^{\delta}$ is perfect (see e.g. \cite{VenemaChapter}), so the operations $\Diamond$ and $\Box$ are completely join- and meet-preserving, respectively, and therefore have right and left adjoints, respectively. We will denote the right adjoint of $\Diamond$ by $\Box^{-1}$, and the left adjoint of $\Box$ by $\Diamond^{-1}$. Alternatively, $\mathbf{A}^{\delta}$ is isomorphic to the complex algebra of the ultrafilter frame of $\mathbf{A}$, and $\Box^{-1}$ and $\Diamond^{-1}$ are the `tense' modalities interpreted with the inverse of the accessibility relation in that frame.

\section{Algebraic semantics} \label{sec:AlgbraicSem}

In this section we introduce the \emph{hybrid algebras} which will form the algebraic semantics for the languages $\mathcal{H}$, $\mathcal{H}(@)$. We also introduce the so-called \emph{grounded} and \emph{degenerate} versions of these structures and study certain truth preserving operations on them. We define \emph{orthodox interpretations} which treat nominals as constants, thus allowing us to fall back on much of the existing theory for modal logic as interpreted in BAOs.

\subsection{Hybrid algebras for the language $\mathcal{H}$}

\cut{It is a well-known fact that most of the familiar logical systems have a (natural) algebraic semantics. In this section, we develop algebraic semantics for the hybrid languages discussed in Section \ref{sec:prelim}. But in order to do that, we must first know which kinds of algebras are relevant. A first step towards an answer comes from Ten Cate's work in \cite{TenCate:Phd:Thesis}. He studied completeness theory for the axiomatizations of these languages. He proved, among other things, a general completeness result for the logics without the additional rules with respect to descriptive two sorted general frames, as well as completeness for the logics with the additional rules with respect to strongly descriptive two sorted general frames. This suggests using algebras corresponding to these two-sorted general frames. However, in the proof of his completeness result with respect to descriptive two-sorted general frames, he treats nominals as unary modalities. This suggests that there are two possible algebraic semantics for each of the hybrid languages discussed in Section \ref{sec:prelim}. One involves interpreting nominals as constants and then falling back on known results from algebraic logic. The second corresponds to two-sorted general frames where the nominals are seen as special variables ranging over a subset of the atoms of the algebra.
}

The first type of algebraic semantics for $\mathcal{H}$ is called an \textit{orthodox interpretation}. An \emph{orthodox interpretation} of $\mathcal{H}$ is a structure $\mathbf{A} = (A, \wedge, \vee, \neg, \bot, \top, \Diamond, \{s_\nomi\}_{\nomi \in \mathsf{NOM}})$, where $(A, \wedge, \vee, \neg, \bot, \top, \Diamond)$ is a BAO, and each $s_\nomi$ is the interpretation of the nominal $\nomi$ as a \emph{constant}. Moreover, $\mathbf{A}$ is required to validate the inequality $\Diamond^{n}(s_{\nomi} \wedge a) \leq \Box^{m}(\neg s_\nomi \vee a)$ for all $\nomi \in \mathsf{NOM}$ and $n, m \in \mathbb{N}$. We use the term ``orthodox'' since it is really the `standard' algebraic semantics for modal logics with constants. However, for us it is `non-standard' since it is not appropriately dual to the intended relational semantics of hybrid logic. Moreover, the rule ($\mathit{Sorted\; substitution}$) is not generally sound on orthodox interpretations, in the sense that for an orthodox interpretation $\mathbf{A}$ and $\mathcal{H}$-formula $\psi$, it may happen that $\mathbf{A} \models \psi \approx \top$ but that $\mathbf{A} \not \models \psi' \approx \top$ for some sorted substitution instance $\psi'$ of $\psi$. For example, consider the orthodox interpretation $\mathbf{A} = (\mathbf{2}, \Diamond, \{s_{\nomj}\}_{\nomj \in \textsf{NOM}})$, where $\mathbf{2}$ is the two element Boolean algebra, $\Diamond 0 = 0$, $\Diamond1 = 1$, $s_\nomj = 0$, and $s_\nomi = 1$ for $\nomi \neq \nomj$. Then $\mathbf{A} \models \Diamond\nomi \approx \top$ but $\mathbf{A} \not\models \Diamond\nomj \approx \top$. However, this will not be a concern to us, as in the ensuing we will always require that an orthodox interpretation validates (all theorems of a) logic $\mathbf{H}\oplus\Sigma$, which is by definition closed under sorted substitution already.

The second and main type of algebraic structures for $\mathcal{H}$ which we consider are \emph{hybrid algebras}, defined below. Unlike orthodox interpretations they do not hold nominals constant and they also enjoy a duality with two-sorted general frames.

\begin{definition}
A \emph{hybrid algebra} is a pair $\frak{A} = (\mathbf{A}, X_A)$, where $\mathbf{A} = (A, \wedge, \vee, \neg, \bot, \top, \Diamond)$ such that $(A, \wedge, \vee, \neg, \bot, \top, \Diamond)$ is a BAO containing at least one atom and $X_A$ is a non-empty subset of the set $At \mathbf{A}$ of atoms of $\mathbf{A}$. We will often refer to $X_A$ as a \textit{set of designated atoms} of the algebra. We also make the following convention: $\Box a := \neg\Diamond\neg a$. Finally, the the class of hybrid algebras will be denoted by $\mathsf{HA}$.

We say that a hybrid algebra $\frak{A} = (\A, X_A)$ is \emph{permeated}, if for each $\bot \neq a \in A$, there is an atom $x \in X_A$ such that $x \leq a$, and for all $x \in X_A$ and $a \in A$, if $x \leq \Diamond a$, then there exists a $y \in X_A$ such that $y \leq a$ and $x \leq \Diamond y$. We will denote the class of permeated hybrid algebras by $\mathsf{PHA}$.
\end{definition}


An \emph{assignment} on a hybrid algebra $\frak{A} = (\A, X_A)$ is a map $v$: $\mathsf{PROP} \cup \mathsf{NOM} \to A$ associating an element of $A$ with each propositional variable in $\mathsf{PROP}$ and an atom of $X_A$ with each nominal in $\mathsf{NOM}$. Given such an assignment $v$, we calculate the \emph{meaning} $\tilde{v}(t)$ of a term $t$ as follows: $\tilde{v}(\bot) = \bot$, $\tilde{v}(p) = v(p)$, $\tilde{v}(\nomj) = v(\nomj)$, $\tilde{v}(\neg \psi) = \neg\tilde{v}(\psi)$, $\tilde{v}(\psi_1 \wedge \psi_2) = \tilde{v}(\psi_1) \wedge \tilde{v}(\psi_2)$, and $\tilde{v}(\Diamond \psi) = \Diamond\tilde{v}(\psi)$.

An equation $\phi \approx \psi$ is \textit{true} in a hybrid algebra $\frak{A}$ (denoted $\frak{A} \models \phi \approx \psi$), if for all assignments $v$, $\tilde{v}(\phi) = \tilde{v}(\psi)$.  A set $E$ of equations is true in a hybrid algebra $\frak{A}$ (denoted $\frak{A} \models E$), if each equation in $E$ is true in $\frak{A}$.

We now turn our attention to products of hybrid algebras.
Let $\frak{A} = (\mathbf{A}, X_A)$ and $\frak{B} = (\mathbf{B}, X_B)$ be two hybrid algebras. The product $\frak{A} \times \frak{B}$ of $\frak{A}$ and $\frak{B}$ is given by $(\mathbf{A} \times \mathbf{B}, X_{A \times B})$, where $\mathbf{A} \times \mathbf{B}$ is defined in the usual way and $X_{A \times B} = \{(x, \bot^\frak{B}) \mid x \in X_A \} \cup \{(\bot^{\frak{A}}, y) \mid y \in X_B\}$.

Validity is \emph{not} generally preserved under products of hybrid algebras. This mirrors the fact that validity of hybrid formulas is not preserved under taking disjoint unions of frames. Consider the hybrid algebra $\frak{A} = (\mathbf{2}, \Diamond,\{1\})$, where $\mathbf{2}$ is the two element Boolean algebra, $\Diamond 0 = 0$, and $\Diamond1 = 1$. Then $\frak{A} \models \Diamond \nomi \approx \top$ but $\frak{A}^2 \not \models \Diamond \nomi \approx \top$.

We can easily fix this by simply adding $\bot$ to the sets of designated atoms. A \textit{grounded hybrid algebra} is just like a hybrid algebra $\frak{A} = (\mathbf{A}, X_A)$, except that the bottom element of the algebra is also included in the set $X_A$. $\mathcal{H}$-equations are interpreted in grounded hybrid algebras as they are in hybrid algebras with nominals ranging over the elements in the designated set of atoms and $\bot$. Given a hybrid algebra $\frak{B} = (\mathbf{B}, X_B)$, the \emph{associated grounded hybrid algebra} is the structure $\frak{B}_0 = (\mathbf{B}, X_B \cup \{\bot\})$.

Although validity is not preserved under taking products of hybrid algebras, we can show that if each of the associated grounded hybrid algebras of two hybrid algebras validates a $\mathcal{H}$-formula, then the product of the original hybrid algebras does too. But first, recall that the \textit{projection map} on the $i$th coordinate of $A_1 \times A_2$ is the map $\pi_i$: $A_1 \times A_2 \to A_i$ defined by $\pi_i(a_1, a_2) = a_i$.

\begin{prop}\label{Prop:Product:Pres}
If $\frak{A}$ and  $\frak{B}$ are hybrid algebras such that $\frak{A}_0 \models \phi \approx \psi$ and $\mathfrak{B}_0 \models \phi \approx \psi$, then $\frak{A} \times \frak{B} \models \phi \approx \psi$.
\end{prop}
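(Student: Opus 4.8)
The plan is to reduce validity in the product $\frak{A}\times\frak{B}$ to validity in the two factors via the projection maps. Recall that $\mathbf{A}\times\mathbf{B}$ is the usual product BAO, so both projections $\pi_1\colon\frak{A}\times\frak{B}\to\frak{A}$ and $\pi_2\colon\frak{A}\times\frak{B}\to\frak{B}$ are BAO-homomorphisms: they preserve $\wedge,\vee,\neg,\bot,\top$ and $\Diamond$ (hence also $\Box$). The key point is to track what the projections do to the designated atoms. By definition $X_{A\times B}=\{(x,\bot^{\frak B})\mid x\in X_A\}\cup\{(\bot^{\frak A},y)\mid y\in X_B\}$, so for any designated atom $z\in X_{A\times B}$ we have either $\pi_1(z)\in X_A$ and $\pi_2(z)=\bot^{\frak B}$, or $\pi_1(z)=\bot^{\frak A}$ and $\pi_2(z)\in X_B$. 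In both cases $\pi_i(z)$ lies in the designated set of atoms \emph{together with $\bot$}, i.e.\ in the designated set of the associated grounded hybrid algebra $\frak A_0$ or $\frak B_0$. This is precisely why we must pass to the grounded algebras.

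Now fix an arbitrary assignment $v\colon\mathsf{PROP}\cup\mathsf{NOM}\to A\times B$ on $\frak A\times\frak B$, so $v(p)\in A\times B$ for propositional variables and $v(\nomi)\in X_{A\times B}$ for nominals. First I would show, by induction on the structure of the $\mathcal{H}$-term $t$, that $\pi_i(\tilde v(t))=\widetilde{\pi_i\circ v}(t)$ for $i=1,2$, where $\pi_i\circ v$ is the composite assignment. The atomic cases $\bot,p,\nomj$ are immediate from the definitions; the Boolean and $\Diamond$ cases follow because $\pi_i$ is a BAO-homomorphism. The only thing needing care is that $\pi_i\circ v$ is a legitimate assignment on the grounded algebra $\frak A_0$ (resp.\ $\frak B_0$): by the previous paragraph, for each nominal $\nomj$ we have $\pi_i(v(\nomj))\in X_A\cup\{\bot\}$ (resp.\ $X_B\cup\{\bot\}$), which is exactly the designated set of $\frak A_0$ (resp.\ $\frak B_0$) and exactly the set over which nominals are allowed to range in grounded hybrid algebras.

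With the commutation lemma in hand, the conclusion follows quickly. Suppose $\frak A_0\models\phi\approx\psi$ and $\frak B_0\models\phi\approx\psi$. Given any assignment $v$ on $\frak A\times\frak B$, apply the lemma to get $\pi_1(\tilde v(\phi))=\widetilde{\pi_1\circ v}(\phi)$ and similarly for $\psi$; since $\pi_1\circ v$ is an assignment on $\frak A_0$ and $\frak A_0\models\phi\approx\psi$, these two are equal, so $\pi_1(\tilde v(\phi))=\pi_1(\tilde v(\psi))$. The same argument with $\pi_2$ and $\frak B_0$ gives $\pi_2(\tilde v(\phi))=\pi_2(\tilde v(\psi))$. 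Since an element of $A\times B$ is determined by its two projections, $\tilde v(\phi)=\tilde v(\psi)$. As $v$ was arbitrary, $\frak A\times\frak B\models\phi\approx\psi$.

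The main obstacle is not any hard calculation but rather getting the bookkeeping about assignments right: one must notice that an assignment on $\frak A\times\frak B$ sends a nominal to a designated atom \emph{of the product}, whose projections need not be designated atoms of the factors (one of them is always $\bot$), and hence that the induction can only be run against the grounded algebras $\frak A_0$, $\frak B_0$ — which is exactly the hypothesis we are given. Everything else is a routine structural induction using that projections are BAO-homomorphisms.
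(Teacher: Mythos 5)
Your proof is correct and is essentially the same argument as the paper's: both rest on the commutation of the projections with term evaluation and on the observation that projections send designated atoms of the product into $X_A \cup \{\bot\}$ (resp.\ $X_B \cup \{\bot\}$), which is exactly why the grounded algebras are needed. The only difference is that you argue directly while the paper argues by contraposition; your write-up is, if anything, more explicit about the bookkeeping.
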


\begin{proof}
We prove the contrapositive, so assume $\frak{A} \times \frak{B} \not\models \phi \approx \psi$. Then there is an assignment \break $\nu$: $\mathsf{PROP} \cup \mathsf{NOM} \to A \times B$ such that $\nu(\phi) \neq \nu(\psi)$. But then $\pi_1(\nu(\phi)) \neq \pi_1(\nu(\psi))$ or $\pi_2(\nu(\phi)) \neq \pi_2(\nu(\psi))$. If $\pi_1(\nu(\phi)) \neq \pi_1(\nu(\psi))$, consider the assignment $\iota$: $\mathsf{PROP} \cup \mathsf{NOM} \to A$ defined by $\iota(p) = \pi_1(\nu(p))$ and $\iota(\nomj) = \pi_1(\nu(\nomj))$ for each $p \in \mathsf{PROP}$ and $\nomj \in \mathsf{NOM}$. Note that $\iota(\nomj)$ might be $\bot$ since some of the atoms in $X_{A \times B}$ are of the form $(\bot, y)$. Using structural induction, we can show that $\iota(\gamma) = \pi_1(\nu(\gamma))$ for all $\mathcal{H}$-formulas $\gamma$. Hence, $\iota(\phi) = \pi_1(\nu(\pi)) \neq \pi_1(\nu(\psi)) = \iota(\psi)$, and so $\frak{A}_0 \not\models \phi \approx \psi$. The case where $\pi_2(\nu(\phi)) \neq \pi_2(\nu(\psi))$ is similar.
\end{proof}





In the next section, we would need to take the product of two hybrid algebras $\frak{A} = (\mathbf{A}, X_A)$ and $\frak{B} = (\mathbf{B}, X_B)$, where either $X_A$ or $X_B$ is empty. But since this goes beyond the definition of a hybrid algebra, we will refer to these structures as \emph{degenerate hybrid algebras}. A \textit{degenerate hybrid algebra} is a pair $\frak{A} = (\mathbf{A}, X_A)$, where $\mathbf{A}$ is a BAO and $X_A = \varnothing$. Given a degenerate hybrid algebra $\mathfrak{A} = (\mathbf{A}, X_A)$, the \emph{associated grounded degenerate hybrid algebra} is the structure $\mathfrak{A}_0 = (\mathbf{A}, \{\bot\})$. We then have the following useful preservation result:

\begin{prop}\label{Product:degenerate:Pres:Lemma}
Let $\frak{A}$ be a degenerate hybrid algebra and $\frak{B}$ a hybrid algebra. If $\frak{A}_0 \models \phi \approx \psi$ and $\mathfrak{B} \models \phi \approx \psi$, then $\frak{A} \times \frak{B} \models \phi \approx \psi$.
\end{prop}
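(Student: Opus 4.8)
The plan is to mimic the proof of Proposition~\ref{Prop:Product:Pres}: argue the contrapositive, and push a refuting assignment on $\frak{A}\times\frak{B}$ down to one of the two factors along the coordinate projections. The structural fact that makes this work is that, since $X_A=\varnothing$, the designated atoms of the product are $X_{A\times B}=\{(\bot^{\frak A},y)\mid y\in X_B\}$; hence under any assignment on $\frak{A}\times\frak{B}$ every nominal is sent to a pair whose first coordinate is $\bot^{\frak A}$ and whose second coordinate lies in $X_B$. This is exactly why the right companion on the $\frak{A}$-side is the grounded algebra $\frak{A}_0=(\mathbf{A},\{\bot\})$ --- it allows $\bot$ as the value of a nominal --- whereas on the $\frak{B}$-side the plain hybrid algebra $\frak{B}$ already suffices.

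Concretely, I would assume $\frak{A}\times\frak{B}\not\models\phi\approx\psi$, fix an assignment $\nu\colon\mathsf{PROP}\cup\mathsf{NOM}\to A\times B$ with $\nu(\phi)\neq\nu(\psi)$, and split on whether $\pi_1(\nu(\phi))\neq\pi_1(\nu(\psi))$ or $\pi_2(\nu(\phi))\neq\pi_2(\nu(\psi))$. In the first subcase I would set $\iota(p)=\pi_1(\nu(p))$ and $\iota(\nomj)=\pi_1(\nu(\nomj))$; by the observation above $\iota(\nomj)=\bot^{\frak A}$ always, so $\iota$ is a legitimate assignment on $\frak{A}_0$, and the usual structural induction (using that $\pi_1$ commutes with all the operations) gives $\iota(\gamma)=\pi_1(\nu(\gamma))$ for every $\mathcal{H}$-formula $\gamma$, hence $\frak{A}_0\not\models\phi\approx\psi$. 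In the second subcase I would set $\jmath(p)=\pi_2(\nu(p))$ and $\jmath(\nomj)=\pi_2(\nu(\nomj))$; now $\jmath(\nomj)\in X_B$, so $\jmath$ is a legitimate assignment on $\frak{B}$, and the symmetric induction yields $\frak{B}\not\models\phi\approx\psi$. In either subcase one of the hypotheses fails, which completes the contrapositive.

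There is no real obstacle here; the one point that deserves a line of justification --- and the only place the argument departs from Proposition~\ref{Prop:Product:Pres} --- is verifying that the projected assignments honour the sort constraint on nominals, i.e.\ that $\iota$ lands nominals in $X_A\cup\{\bot\}=\{\bot\}$ and that $\jmath$ lands them in $X_B$. Both are immediate from the description of $X_{A\times B}$ recorded in the first paragraph, and the structural inductions themselves are entirely routine and can simply be referred back to Proposition~\ref{Prop:Product:Pres}.
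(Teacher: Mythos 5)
Your proof is correct and follows exactly the route the paper intends: the paper's own proof of this proposition is just the remark that it is ``similar to that of Proposition~\ref{Prop:Product:Pres}''. Your explicit observation that $X_{A\times B}=\{(\bot^{\frak A},y)\mid y\in X_B\}$, so that the first projection of a nominal's value is always $\bot$ (legitimate in $\frak{A}_0$) while the second lands in $X_B$ (legitimate in $\frak{B}$ itself), is precisely the point that justifies the asymmetric hypotheses, and your write-up is if anything more complete than the paper's.
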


\begin{proof}
The proof is similar to that of Proposition \ref{Prop:Product:Pres}
\end{proof}

Later we will need the fact that the product of two permeated hybrid algebras is also permeated. We now show that this is indeed the case.

\begin{prop} \label{Prop:Perm:preserved:prod}
Let $\frak{A} = (\mathbf{A}, X_A)$ and $\frak{B} = (\mathbf{B}, X_B)$ be two permeated hybrid algebras. Then $\frak{A} \times \frak{B}$ is also permeated.
\end{prop}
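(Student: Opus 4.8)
The plan is to verify the two defining conditions of permeatedness for $\frak{A} \times \frak{B}$ directly, using the corresponding conditions for $\frak{A}$ and $\frak{B}$ together with the coordinatewise structure of the product. Recall that $X_{A \times B} = \{(x, \bot^{\frak{B}}) \mid x \in X_A\} \cup \{(\bot^{\frak{A}}, y) \mid y \in X_B\}$, that the order on the product is coordinatewise, that $\bot^{\frak{A} \times \frak{B}} = (\bot^{\frak{A}}, \bot^{\frak{B}})$, and that $\Diamond(a,b) = (\Diamond a, \Diamond b)$; note also that each element of $X_{A\times B}$ is genuinely an atom of $\mathbf{A} \times \mathbf{B}$, because $X_A \subseteq \At\mathbf{A}$ and $X_B \subseteq \At\mathbf{B}$ and an element of a product Boolean algebra with exactly one nonzero coordinate which is an atom is itself an atom.

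First I would check the first permeatedness condition: given $(a,b) \neq (\bot^{\frak{A}}, \bot^{\frak{B}})$, I must find an atom in $X_{A\times B}$ below it. Since $(a,b) \neq \bot$, at least one coordinate is nonzero; say $a \neq \bot^{\frak{A}}$ (the other case is symmetric). By permeatedness of $\frak{A}$, there is $x \in X_A$ with $x \leq a$. Then $(x, \bot^{\frak{B}}) \in X_{A \times B}$ and $(x, \bot^{\frak{B}}) \leq (a, b)$ coordinatewise, as required.

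Next I would check the second condition: suppose $(x_1, x_2) \in X_{A \times B}$ and $(x_1, x_2) \leq \Diamond(a, b) = (\Diamond a, \Diamond b)$. By definition of $X_{A \times B}$, exactly one coordinate of $(x_1, x_2)$ is nonzero; say it is the first, so $x_1 \in X_A$ and $x_2 = \bot^{\frak{B}}$ (again the other case is symmetric). Then $x_1 \leq \Diamond a$, so by permeatedness of $\frak{A}$ there is $y \in X_A$ with $y \leq a$ and $x_1 \leq \Diamond y$. Now take the atom $(y, \bot^{\frak{B}}) \in X_{A \times B}$: it satisfies $(y, \bot^{\frak{B}}) \leq (a, b)$ since $y \leq a$ and $\bot^{\frak{B}} \leq b$, and $(x_1, \bot^{\frak{B}}) \leq (\Diamond y, \Diamond \bot^{\frak{B}}) = \Diamond(y, \bot^{\frak{B}})$ since $x_1 \leq \Diamond y$ and $\Diamond \bot^{\frak{B}} = \bot^{\frak{B}}$ by normality. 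This is exactly the witness required.

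This proof is essentially bookkeeping; there is no genuine obstacle. The one mild subtlety to state carefully is that the nonzero coordinate of any member of $X_{A\times B}$ is itself an atom in the corresponding factor (so that the factor's permeatedness hypothesis applies), and that we only ever need to produce witnesses of the same "shape" (one coordinate from $X_A$ and the other $\bot^{\frak{B}}$, or vice versa), which keeps the witness inside $X_{A \times B}$. I would simply remark that both conditions are handled by splitting on which coordinate of the given designated atom is nonzero, doing one side explicitly and noting the other is symmetric.
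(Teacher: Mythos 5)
Your proof is correct and follows essentially the same route as the paper's: both verify the two permeatedness conditions coordinatewise, splitting on which coordinate of the given element (respectively, of the designated atom) is nonzero and invoking permeatedness of the corresponding factor to produce a witness of the same shape in $X_{A \times B}$. Your additional remark that each member of $X_{A \times B}$ is genuinely an atom of the product is a small but welcome piece of explicitness that the paper leaves implicit.
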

\begin{proof}
For the first condition, let $(a, b) \in A \times B$ such that $(a, b) \neq (\bot, \bot)$. Then $a \neq \bot$ or $b \neq \bot$. Assume $a \neq \bot$. Now, since $\frak{A}$ is permeated, there is some $x \in X_A$ such that $x \leq a$. So we know that $(x, \bot) \leq (a, b)$. Furthermore, $x \in X_A$, hence $(x, \bot) \in X_{A \times B}$ by definition. Similarly for $b \neq \bot$. For the second condition, we have two consider two cases cases:\

{\bf Case 1:} $(\bot, y) \in X_{A \times B}$ such that $(\bot, y) \leq \Diamond(a, b)$. Then $(\bot, y) \leq (\Diamond a, \Diamond b)$, and so, $y \leq \Diamond b$. But we know that $\frak{B}$ is permeated, so there is a $y' \in X_B$ such that $y' \leq b$ and $y \leq \Diamond y'$. Hence, $(\bot, y') \leq (a, b)$ and $(\bot, y) \leq (\bot, \Diamond y') = (\Diamond\bot, \Diamond y') = \Diamond(\bot, y')$. Furthermore, since $y' \in X_B$, $(\bot, y') \in X_{A \times B}$.

{\bf Case 2:} $(x, \bot) \in X_{A \times B}$ such that $(x, \bot) \leq \Diamond(a, b)$. Similar to Case 1.
\end{proof}

\subsection{Hybrid algebras for the language $\mathcal{H(@)}$}

As for the language $\mathcal{H}$, we present two possible algebraic semantics for $\mathcal{H}(@)$. Again the first involves interpreting  nominals as constants, and as for $\mathcal{H}$, we will refer to these algebras as orthodox interpretations. An \emph{orthodox interpretation} of $\mathcal{H}(@)$ is an algebra $\mathbf{A} = (A, \wedge, \vee, \neg, \bot, \top, \Diamond, @, \{s_\nomi\}_{\nomi \in \mathsf{NOM}})$, where $(A, \wedge, \vee, \neg, \bot, \top, \Diamond)$ is a BAO, $@$ is a binary operator, each $s_\nomi$ is the interpretation of the nominal $\nomi$ as a \emph{constant}, and $\mathbf{A}$ is required to validate the following for all $\nomi, \nomj \in \mathsf{NOM}$:
\begin{align*}
@_{s_{\nomi}}(\neg a \vee b) &\leq \neg @_{s_{\nomi}} a \vee @_{s_{\nomi}} b & \neg @_{s_{\nomi}}a &= @_{s_{\nomi}}\neg a & @_{s_{\nomi}}@_{s_{\nomj}}a &\leq @_{s_{\nomj}}a \\
@_{s_{\nomi}}s_{\nomi} &= \top & s_{\nomi} \wedge a &\leq @_{s_{\nomi}}a & \Diamond @_{s_{\nomi}}a &\leq @_{s_{\nomi}}a
\end{align*}

The behaviour of the $@$ operator in an orthodox interpretation of $\mathcal{H}(@)$ is characterized by Proposition \ref{Lemma:@:BehavesCorrectly:ortho}. To prove this proposition, we make use of the fact that, in orthodox interpretations, the satisfaction operator preserves finite (including empty) meets and joins in its second coordinate, and is consequently also monotone in that coordinate. The proofs of both these facts are left for the reader.

\begin{prop} \label{Lemma:@:BehavesCorrectly:ortho}
Let $\mathbf{A}$ be an orthodox interpretation of $\mathcal{H}(@)$, and let $a$ be an element of $A$ and $s_{\nomi}$ the constant interpretation of $\nomi$. Then $@_{s_{\nomi}}a = \top$ iff $s_{\nomi} \leq a$ and $@_{s_{\nomi}}a = \bot$ iff $s_{\nomi} \leq \neg a$.
\end{prop}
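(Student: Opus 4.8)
The plan is to prove both biconditionals directly from the defining inequalities of an orthodox interpretation, using the stated auxiliary facts that $@_{s_\nomi}(-)$ preserves finite meets and joins in its second argument (hence preserves $\top$, $\bot$, and order) in its second coordinate. The key algebraic ingredients are the axioms $@_{s_\nomi} s_\nomi = \top$, $s_\nomi \wedge a \leq @_{s_\nomi} a$, $\neg @_{s_\nomi} a = @_{s_\nomi}\neg a$ ($\mathit{Selfdual}$), and $@_{s_\nomi}(\neg a \vee b) \leq \neg @_{s_\nomi} a \vee @_{s_\nomi} b$ ($K_@$). I would first derive a couple of convenient lemmas: that $@_{s_\nomi}$ is monotone in its second coordinate, and that $@_{s_\nomi}\top = \top$ (apply $\mathit{Selfdual}$ and $\bot$-preservation, or note $s_\nomi \leq \top$ so $s_\nomi = s_\nomi \wedge \top \leq @_{s_\nomi}\top$).

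For the first biconditional, the ($\Leftarrow$) direction is immediate: if $s_\nomi \leq a$ then by monotonicity $@_{s_\nomi} s_\nomi \leq @_{s_\nomi} a$, and since $@_{s_\nomi} s_\nomi = \top$ we get $@_{s_\nomi} a = \top$. For ($\Rightarrow$), suppose $@_{s_\nomi} a = \top$. The trick is to bound $s_\nomi$ using the two axioms $s_\nomi \wedge b \leq @_{s_\nomi} b$: instantiate this at $b = \neg a$ to get $s_\nomi \wedge \neg a \leq @_{s_\nomi}\neg a = \neg @_{s_\nomi} a = \neg \top = \bot$, so $s_\nomi \wedge \neg a = \bot$, i.e. $s_\nomi \leq a$. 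This is the cleanest route and avoids needing $K_@$ at all for the first claim.

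For the second biconditional, ($\Leftarrow$): if $s_\nomi \leq \neg a$, then by the just-proved first biconditional $@_{s_\nomi}\neg a = \top$, and by $\mathit{Selfdual}$ $@_{s_\nomi} a = \neg @_{s_\nomi}\neg a = \neg\top = \bot$. For ($\Rightarrow$): if $@_{s_\nomi} a = \bot$, then $@_{s_\nomi}\neg a = \neg @_{s_\nomi} a = \neg\bot = \top$, so by the first biconditional $s_\nomi \leq \neg a$. So in fact the second biconditional follows formally from the first together with $\mathit{Selfdual}$, by swapping $a$ for $\neg a$.

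I do not anticipate a serious obstacle here; the main subtlety is simply to be careful that the auxiliary preservation facts (preservation of finite, including empty, meets/joins, hence monotonicity) are invoked correctly, since the proposition explicitly delegates their proofs to the reader. The one spot worth double-checking is the step $s_\nomi \wedge \neg a \leq @_{s_\nomi}\neg a = \neg\top = \bot$: this uses the axiom $s_\nomi \wedge b \leq @_{s_\nomi} b$ with $b := \neg a$ and then $\mathit{Selfdual}$, both of which are axioms of an orthodox interpretation, so everything is in place. The whole argument is a short chain of inequalities, so I would present it as such rather than invoking any heavy machinery.
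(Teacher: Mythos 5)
Your proposal is correct and follows essentially the same route as the paper: both use ($\mathit{Intro}$) at $b=\neg a$ together with ($\mathit{Selfdual}$) for the forward direction of the first biconditional, and monotonicity of $@$ in its second coordinate plus ($\mathit{Ref}$) for the converse. Your observation that the second biconditional follows formally from the first via ($\mathit{Selfdual}$) is a minor streamlining of the paper's direct argument, but the underlying steps are the same.
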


\begin{proof}
First, assume $@_{s_{\nomi}}a = \top$. Then $\neg @_{s_{\nomi}}a = \bot$, and so $@_{s_{\nomi}}\neg a = \bot$. But $s_{\nomi} \wedge \neg a \leq @_{s_{\nomi}}\neg a$, so $s_{\nomi} \wedge \neg a \leq \bot$, which implies that $s_{\nomi} \leq a$.

For the converse, assume $s_{\nomi} \leq a$. Then $s_{\nomi} \vee a = a$, and so $@_{s_{\nomi}}(s_{\nomi} \vee a) = @_{s_{\nomi}}a$. Hence, since @ preserves finite joins in its second coordinate, $@_{s_{\nomi}}s_{\nomi} \vee @_{s_{\nomi}}a = @_{s_{\nomi}}a$. But $@_{s_{\nomi}}s_{\nomi} = \top$, so $@_{s_{\nomi}}a = \top$.

Next, assume $@_{s_{\nomi}}a = \bot$. Then we have $s_{\nomi} \wedge a \leq @_{s_{\nomi}} a = \bot$, so $s_{\nomi} \leq \neg a$, as required.

Conversely, suppose $s_{\nomi} \leq \neg a$. By the monotonicity of $@$, $@_{s_{\nomi}}s_{\nomi} \leq @_{s_{\nomi}}\neg a$, and so, since $@_{s_{\nomi}}s_{\nomi} = \top$, $@_{s_{\nomi}}\neg a = \top$. Hence, $\neg @_{s_{\nomi}}a = \top$, and so $@_{s_{\nomi}}a = \bot$.
\end{proof}

We now extend the hybrid algebras introduced in the previous subsection to accommodate the interpretation of the $@$ operator.


\begin{definition}
A hybrid $@$-algebra is a pair $\frak{A} = (\mathbf{A}, X_A)$, where $\mathbf{A} = (A, \wedge, \vee, \neg, \bot, \top, \Diamond, @)$ with $(A, \wedge, \vee, \neg, \bot, \top, \Diamond)$ a BAO containing at least one atom, $X_A$ is non-empty subset of atoms of $\mathbf{A}$, $@$ is a binary operator whose first coordinate ranges over $X_A$ and the second coordinate over all elements of the algebra, and for all $a, b \in A$ and all $x, y \in X_A$ the following hold:

\begin{center}
\begin{minipage}{7cm}

\begin{description}

\item[($\mathit{K@}$)] $@_x(\neg a \vee b) \leq \neg @_x a \vee @_x b$,

\item[($\mathit{self}$-$\mathit{dual}$)] $\neg @_{x}a = @_{x}\neg a$,

\item[($\mathit{agree}$)] $@_{x}@_{y}a \leq @_{y}a$,

\end{description}

\end{minipage}
\begin{minipage}{6cm}

\begin{description}

\item[($\mathit{ref}$)] $@_{x}x = \top$,

\item[($\mathit{introduction}$)] $x \wedge a \leq @_{x}a$, and

\item[($\mathit{back}$)] $\Diamond @_{x}a \leq @_{x}a$.

\end{description}

\end{minipage}
\end{center}

The class of hybrid $@$-algebras will be denoted by $\mathsf{H}@\mathsf{A}$.

A hybrid $@$-algebra $\frak{A} = (\A, X_A)$ is said to be \emph{permeated} if its hybrid algebra reduct is permeated.
The class of permeated hybrid $@$-algebras will be denoted by $\mathsf{PH}@\mathsf{A}$.
\end{definition}

As before, an \textit{assignment} on a hybrid $@$-algebra $\frak{A}$ is a map $v$: $\mathsf{PROP} \cup \mathsf{NOM} \to A$ associating an element of $A$ with each propositional variable in $\mathsf{PROP}$ and an atom of $X_A$ with each nominal in $\mathsf{NOM}$. Given such an assignment $v$, we calculate the \textit{meaning} $\tilde{v}(t)$ of a term $t$ as before. In particular, $\tilde{v}(@_{\nomj}\psi) = @_{\tilde{v}(\nomj)}\tilde{v}(\psi)$. Truth is defined as for hybrid algebras.


Proposition \ref{Lemma:@:BehavesCorrectly:H@A} below characterizes the behaviour of the $@$-operator in hybrid $@$-algebras.

\begin{prop} \label{Lemma:@:BehavesCorrectly:H@A}
Let $\frak{A} = (\mathbf{A}, X_A)$, where $\mathbf{A} = (A, \wedge, \vee, \neg, \bot, \top, \Diamond, @)$ with $(A, \wedge, \vee, \neg, \bot, \top, \Diamond)$ a BAO, $X_A$ is non-empty subset of atoms of $\mathbf{A}$, and $@$ is a binary operator with first coordinate ranging over $X_A$ and second coordinate over all elements of the algebra. Then $\frak{A}$ is a hybrid @-algebra iff for all $x \in X_A$ and $a \in A$, $@_{x}a = \top$ iff $x \leq a$ and $@_{x}a = \bot$ iff $x \nleq a$.
\end{prop}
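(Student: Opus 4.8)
The plan is to prove both directions of the biconditional. For the left-to-right direction, assume $\frak{A}$ is a hybrid $@$-algebra; I must show that $@_x a$ takes only the values $\top$ and $\bot$, with $@_x a = \top$ exactly when $x \leq a$. First I would establish the ``$\Rightarrow$'' halves of the two inner biconditionals using essentially the same computations as in the proof of Proposition~\ref{Lemma:@:BehavesCorrectly:ortho}: if $@_x a = \top$ then by ($\mathit{self}$-$\mathit{dual}$) we get $@_x \neg a = \neg @_x a = \bot$, and then ($\mathit{introduction}$) gives $x \wedge \neg a \leq @_x \neg a = \bot$, so $x \leq a$; and if $@_x a = \bot$ then ($\mathit{introduction}$) gives $x \wedge a \leq @_x a = \bot$, i.e.\ $x \nleq a$ (here using that $x$ is an atom, so $x \wedge a = \bot$ iff $x \nleq a$). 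Conversely, if $x \leq a$: I would use that $@$ preserves finite joins in its second coordinate (which, as in the orthodox case, follows from ($\mathit{K@}$) and ($\mathit{self}$-$\mathit{dual}$); this should be stated or cited as a preliminary fact, analogous to the remark before Proposition~\ref{Lemma:@:BehavesCorrectly:ortho}), so from $x \vee a = a$ we get $@_x x \vee @_x a = @_x a$, and ($\mathit{ref}$) gives $@_x x = \top$, whence $@_x a = \top$. Finally, to get the full dichotomy, observe that $x$ being an atom means for any $a$ either $x \leq a$ or $x \leq \neg a$; in the first case $@_x a = \top$, and in the second case $@_x \neg a = \top$ so $@_x a = \neg @_x \neg a = \bot$ by ($\mathit{self}$-$\mathit{dual}$). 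This also shows $x \nleq a \Rightarrow @_x a = \bot$, completing the ``$\nleq$'' biconditional.

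For the right-to-left direction, assume $@_x a = \top$ iff $x \leq a$, and $@_x a = \bot$ iff $x \nleq a$ (equivalently, $@_x a \in \{\top,\bot\}$ always, and it is $\top$ precisely when $x \leq a$). I must verify each of the six axioms ($\mathit{K@}$), ($\mathit{self}$-$\mathit{dual}$), ($\mathit{agree}$), ($\mathit{ref}$), ($\mathit{introduction}$), ($\mathit{back}$). Each reduces to a routine case check on whether $x \leq a$ (and $x \leq b$, etc.). For instance, ($\mathit{ref}$): $x \leq x$ always, so $@_x x = \top$. For ($\mathit{self}$-$\mathit{dual}$): exactly one of $x \leq a$, $x \leq \neg a$ holds since $x$ is an atom, so exactly one of $@_x a$, $@_x \neg a$ is $\top$ and the other $\bot$, giving $\neg @_x a = @_x \neg a$. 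For ($\mathit{introduction}$): if $x \leq a$ then $@_x a = \top \geq x \wedge a$; if $x \nleq a$ then $x \wedge a = \bot \leq @_x a$. For ($\mathit{back}$): $@_x a$ is either $\top$ or $\bot$; if $\bot$, then $\Diamond @_x a = \Diamond \bot = \bot \leq @_x a$; if $\top$, the inequality is trivial. For ($\mathit{agree}$): $@_y a \in \{\top,\bot\}$; note $@_x \top = \top$ (since $x \leq \top$) and $@_x \bot = \bot$ (since $x \nleq \bot$, as $x$ is an atom hence $x \neq \bot$), so $@_x @_y a = @_y a$, giving ($\mathit{agree}$) with equality. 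For ($\mathit{K@}$): split on whether $x \leq \neg a \vee b$; if not, the left side is $\bot$ and the inequality is trivial; if so, then $x \leq \neg a$ or $x \leq b$, and in either case one checks $\neg @_x a \vee @_x b = \top$.

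I do not expect any genuine obstacle here; the proposition is essentially a restatement of Proposition~\ref{Lemma:@:BehavesCorrectly:ortho} adapted to the setting where nominals are atoms rather than arbitrary constants, and the real content is the bookkeeping. The one point requiring a little care is the repeated use of the fact that a designated nominal $x$ is an \emph{atom}, hence $x \neq \bot$ and $x \wedge a = \bot \iff x \nleq a \iff x \leq \neg a$; this is what makes the two-valuedness work and is the only feature distinguishing this from the orthodox case. I would also want to make explicit, as a small preliminary, that in a hybrid $@$-algebra $@$ preserves finite joins (and hence is monotone) in its second coordinate — mirroring the parenthetical remark preceding Proposition~\ref{Lemma:@:BehavesCorrectly:ortho} — since this is needed for the converse in the first direction.
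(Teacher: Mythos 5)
Your proposal is correct and follows essentially the same route as the paper: the forward direction is the orthodox computation (Proposition~\ref{Lemma:@:BehavesCorrectly:ortho}) adjusted by the observation that for an atom $x$ one has $x \leq \neg a$ iff $x \nleq a$, and the converse is the same six-axiom case check on whether $x \leq a$. Your explicit flagging of the join-preservation fact for $@$ in its second coordinate is a point the paper leaves implicit in its ``similar to'' reference, but it is the same argument.
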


\begin{proof}
The proof of the left-to-right direction is similar to that of Proposition \ref{Lemma:@:BehavesCorrectly:ortho}. For the converse direction, we have to show that $@$ satisfies $(K@$), $(\mathit{self}$-$\mathit{dual})$, $(\mathit{agree})$, $(\mathit{ref})$, $(\mathit{introduction})$ and $(\mathit{back})$.

For $(K@$), let $x \in X_A$ and $a, b \in A$, and assume $x \leq \neg a \vee b$. Then $@_x(\neg a \vee b) = \top$. But since $x$ is an atom, $x \leq \neg a$ or $x \leq b$. Hence, $x \nleq a$ or $x \leq b$, and so $@_xa = \bot$ or $@_xb = \top$. We then have $\neg @_xa = \top$ or $@_xb = \top$, which means that $\neg @_xa \vee @_xb = \top$. Therefore, $@_x(\neg a \vee b) \leq \neg @_xa \vee @_xb$, as required. On the other hand, if $x \nleq \neg a \vee b$, $@_x(\neg a \vee b) = \bot$, which gives $@_x(\neg a \vee b) \leq \neg @_xa \vee @_xb$.

For $(\mathit{self}$-$\mathit{dual})$, let $x \in X_A$ and $b \in A$, and assume $x \leq b$. Then $@_xb = \top$, and so $\neg @_xb = \bot$. But $x \leq b$ implies $x \nleq \neg b$, which means that $@_x\neg b = \bot$. Hence, $\neg @_xb = @_x\neg b$. On the other hand, if $x \nleq b$, then $@_xb = \bot$, which gives $\neg @_x b = \top$. But we know from $x \nleq b$ that $x \leq \neg b$, so $@_x\neg b = \top$.

To show that $@$ satisfies $(\mathit{agree})$, let $x, y \in X_A$ and $b \in A$, and assume $y \leq b$. Then $@_yb = \top$, and so $@_x@_yb \leq @_yb$. On the other hand, if $y \nleq b$, $@_yb = \bot$, which means that $@_x@_yb = @_x\bot = \bot$. Hence, $@_x@_yb \leq @_yb$.

For $(\mathit{ref})$, note that we have $x \leq x$ for all $x \in X_A$, so $@_xx = \top$.

$(\mathit{introduction})$ Let $x \in X$ and $b \in A$, and assume $x \leq b$. Then $@_xb = \top$, and so $x \wedge b \leq @_xb$. If $x \nleq b$, $x \wedge b = \bot$ and $@_{x}b = \bot$, which gives $x \wedge b \leq @_xb$.

Finally, to show that $@$ satisfies $(\mathit{back})$, let $x \in X_A$ and $b \in A$. If $x \leq b$, then $@_x b = \top$, which means that $\Diamond @_xb \leq @_xb$. On the other hand, assume $x \nleq b$. Then we have $@_xb = \bot$, and so we get $\Diamond @_xb = \Diamond\bot = \bot$. Therefore, $\Diamond @_xb \leq @_xb$.  \qedhere
\end{proof}

\begin{remark}
In an orthodox interpretation for $\mathcal{H}(@)$ it is not necessarily the case that $@_{s_{\nomi}}a = \bot$ iff $s_{\nomi} \nleq a$. This reflects the fact that the constant interpretations of nominals need not be atoms. Indeed, consider the orthodox interpretation $\mathbf{A} = (\mathbf{2}, \Diamond, @, \{s_{\nomj}\}_{\nomj \in \textsf{NOM}})$, where $\mathbf{2}$ is the two element Boolean algebra, $\Diamond 0 = 0$, $\Diamond 1 = 1$, $s_\nomj = 0$, and $s_\nomi = 0$ for $\nomi \neq \nomj$. Then $@_{s_{\nomj}}\neg s_{\nomj} = \neg @_{s_\nomj}s_{\nomj} = \neg 1 = 0$ but $s_{\nomj} = 0 \leq 1 = \neg s_{\nomj}$. This further motivates our choice to work with hybrid algebras instead of orthodox interpretations.
\end{remark}

\subsection{Duality with two sorted-general frames}

As already indicated, the duality between Boolean algebras with operators and descriptive general frames extends naturally to a duality between hybrid algebras and descriptive two-sorted general frames. In this subsection we will formulate these results more precisely. We will omit proofs, as these are relatively straightforward extensions of the corresponding proofs in the modal case which can be found e.g. in \cite[Section 5.4]{BdRV01}. For completely worked out proofs and also the duality of morphisms, we refer the interested reader to \cite[Chapter 2]{Robinson:PhD:Thesis}.

We turn two-sorted general frames into hybrid algebras, and vice versa, in a natural way. The underlying hybrid algebra of a two-sorted general frame consists of its algebra of admissible sets together with the singletons containing its admissible points:

\begin{definition}
Let $\frak{g} = (W, R, A, B)$ be a two-sorted general frame. The \emph{underlying hybrid algebra of} $\frak{g}$ is the structure
\[
\frak{g}^{*} = (A, \cap, \cup, -, \varnothing, W, \langle R\rangle, X_B),
\]
where $X_B = \{\{w\} \mid w \in B\}$.
\end{definition}

The two-sorted general ultrafilter frame of a hybrid algebra is  the (ordinary) ultrafilter frame of its BAO part, augmented with the set of principle ultrafilters generated by the designated atoms:

\begin{definition}
Let $\frak{A} = (\mathbf{A}, X_A)$ be a hybrid algebra. Then the \emph{two-sorted general ultrafilter frame of $\frak{A}$} is defined as
\[
\frak{A}_{*} = (\mathit{Uf}\frak{A}, Q_{\Diamond}, \widehat{A}, X_A\uparrow),
\]
where
\begin{itemize}
\item $\mathit{Uf}\frak{A}$ is the set of all ultrafilters of $\mathbf{A}$,
\item $Q_{\Diamond} \subseteq \mathit{Uf}\frak{A} \times \mathit{Uf}\frak{A}$ such that $Q_{\Diamond} u v$ iff $\Diamond a \in u$ for all $a \in v$,
\item $\widehat{A} := \{\widehat{a} \mid a \in A\}$ where $\widehat{a} = \{ u \in  \mathit{Uf}\frak{A} \mid a \in u\}$, and
\item $X_A\uparrow = \{x\uparrow \mid x \in X_A\}$ is the set of principle ultrafilters generated by the elements of $X_A$.
\end{itemize}
\end{definition}

These two constructions are systematically connected by the following theorem:

\begin{theorem}
Let $\frak{A} = (\mathbf{A}, X_A)$ be a hybrid algebra, and $\frak{g} = (W, R, A, B)$ a two-sorted general frame. Then
\begin{enumerate}

\item $\frak{A}_{*}$ is a descriptive two-sorted general frame,

\item $\frak{g}^{*}$ is a hybrid algebra,

\item $(\frak{A}_{*})^{*} \cong \frak{A}$,

\item $(\frak{g}^{*})_{*} \cong \frak{g}$ iff $\frak{g}$ is descriptive,

\item if $\frak{A}$ is permeated, then $\frak{A}_{*}$ is strongly descriptive, and

\item if $\frak{g}$ is strongly descriptive, then $\frak{g}^{*}$ is permeated.
\end{enumerate}
\end{theorem}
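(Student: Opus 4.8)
The plan is to bootstrap everything off the standard Stone--Jónsson--Tarski duality between BAOs and descriptive general frames (available e.g.\ in \cite[Section 5.4]{BdRV01}), treating the extra layer of designated atoms / admissible points as a bookkeeping task on top of it. The single elementary fact that makes this layer behave is: if $x$ is an atom of $\mathbf{A}$, then the only ultrafilter of $\mathbf{A}$ containing $x$ is the principal one $x\uparrow$, so $\widehat{x} = \{x\uparrow\}$; dually, for an admissible point $w \in B$ of a two-sorted general frame the singleton $\{w\}$ is an atom of the Boolean algebra $A$ of admissible sets. I would first record this fact and then use it repeatedly.

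For (1) and (2) I would invoke the modal case — $(\mathit{Uf}\frak{A}, Q_{\Diamond}, \widehat{A})$ is a descriptive general frame and $(A, \cap, \cup, -, \varnothing, W, \langle R\rangle)$ is a BAO — and check only the two-sorted clauses. For (1): $X_A\uparrow$ is non-empty since $X_A$ is, and for $x \in X_A$ we have $\{x\uparrow\} = \widehat{x} \in \widehat{A}$, so each admissible point of $\frak{A}_*$ is a point whose singleton is admissible; descriptivity is inherited from the general-frame reduct. For (2): pick $w \in B$ (possible since $B \neq \varnothing$); then $\{w\} \in A$ is an atom of $A$, so the BAO reduct of $\frak{g}^*$ contains an atom, $X_B$ is a non-empty set of atoms, and $\frak{g}^*$ is a hybrid algebra.

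For (3), the Stone map $\eta\colon a \mapsto \widehat{a}$ is the usual isomorphism between $\mathbf{A}$ and the BAO reduct of $(\frak{A}_*)^*$, and by the boxed fact $\eta[X_A] = \{\widehat{x} \mid x \in X_A\} = \{\{x\uparrow\}\mid x\in X_A\}$ is exactly the set of designated atoms of $(\frak{A}_*)^*$, so $\eta$ is an isomorphism of hybrid algebras. For (4), the classical fact is that $\mu\colon w \mapsto \{a \in A \mid w \in a\}$ is an isomorphism of general frames onto the reduct of $(\frak{g}^*)_*$ exactly when $\frak{g}$ is differentiated, tight and compact, i.e.\ descriptive; since $(\{w\})\uparrow = \{a \in A \mid w \in a\} = \mu(w)$, this isomorphism restricts to a bijection $B \to X_B\uparrow$ between the admissible-point layers, giving a two-sorted frame isomorphism when $\frak{g}$ is descriptive. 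Conversely, if $\frak{g} \cong (\frak{g}^*)_*$ then $\frak{g}$ is descriptive, because $(\frak{g}^*)_*$ is descriptive by (1) and differentiatedness, tightness and compactness are preserved by isomorphism.

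For (5) and (6) I would simply transcribe the permeation conditions through this dictionary. For (5): $\frak{A}_*$ is descriptive by (1); given $\varnothing \neq \widehat{c}$ (so $c \neq \bot$), permeation of $\frak{A}$ gives $x \in X_A$ with $x \leq c$, hence $x\uparrow \in \widehat{c}$, which is clause (i); and if $x\uparrow \in X_A\uparrow$ and some $v \in \widehat{c}$ satisfies $Q_{\Diamond}(x\uparrow, v)$, then $c \in v$ forces $x \leq \Diamond c$, so permeation gives $y \in X_A$ with $y \leq c$ and $x \leq \Diamond y$; since $y$ is an atom and $\Diamond$ is monotone one checks $Q_{\Diamond}(x\uparrow, y\uparrow)$ and $y\uparrow \in \widehat{c}$, which is clause (ii). For (6): unravelling definitions, the first permeation condition for $\frak{g}^*$ is verbatim strong-descriptivity clause (i), and the second is verbatim clause (ii), using $\{w\} \subseteq \langle R\rangle a \iff \exists v \in a\,(wRv)$ and $\{w\} \subseteq \langle R\rangle\{w'\} \iff wRw'$. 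The only genuinely substantial ingredient is the classical one we are importing (compactness of the ultrafilter frame and the self-duality criterion for general frames); all the hybrid-specific steps are routine once $\widehat{x} = \{x\uparrow\}$ is in hand, so the main care needed is just to track the two designated layers consistently through each construction.
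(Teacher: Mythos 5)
Your proposal is correct and takes essentially the same route the paper intends: the paper explicitly omits this proof, describing it as a straightforward extension of the standard BAO/descriptive-general-frame duality of \cite[Section 5.4]{BdRV01}, which is precisely what you carry out, with the key observation $\widehat{x}=\{x\uparrow\}$ for atoms $x$ (and its dual, that singletons of admissible points are atoms of the admissible-set algebra) handling the extra designated layer. All the individual verifications you sketch for (1)--(6) check out.
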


\section{Algebraic completeness} \label{sec:algebraic:comp}

We will now prove completeness of the axiomatizations in Section \ref{sec:prelim} with respect to the hybrid algebras introduced in Section \ref{sec:AlgbraicSem}. The general pattern is as follows: the axiomatizations without the additional `non-orthodox' rules are complete with respect to the class of hybrid algebras, whereas the axiomatizations with the additional `non-orthodox' rules are complete with respect to the class of permeated hybrid algebras.

\subsection{Algebraic completeness of $\mathbf{H}\oplus\Sigma$} \label{subsec:HSigma:complete}

The standard proof of the completeness of modal logics with respect to classes of BAOs proceeds by the well-known Lindenbaum-Tarski construction, see e.g. \cite{BdRV01}. In proving the completeness of logics $\mathbf{H}\oplus\Sigma$ with respect to classes of hybrid algebras we will also make use of this construction with, however, some significant complications. The main hurdle is the fact that the equivalence classes $[\nomi]$ of nominals need not be atoms of the Lindenbaum-Tarski algebra.
%
%
In order to be able to fall back on the established theory of modal logic,  we temporarily interpret the nominals as modal constants and work with the orthodox Lindenbaum-Tarski-algebra of $\mathbf{H}\oplus\Sigma$ over $\textsf{PROP}$. This algebra then requires a certain amount of sculpting to change it into a hybrid algebra of the right kind, as we will soon see.
We first state and prove the main theorem, and consequently prove the lemmas needed in it.

\begin{theorem} \label{Thm:HA:Comp}
For any set $\Sigma$ of $\mathcal{H}$-formulas, the logic $\mathbf{H}\oplus\Sigma$ is sound and complete with respect to the class of all hybrid algebras which validate $\Sigma$. That is, $\vdash_{\mathbf{H}\oplus\Sigma} \phi$ iff $\models_{\mathsf{HA}(\Sigma)} \phi \approx
\top$.
\end{theorem}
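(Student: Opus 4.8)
The plan is to prove soundness and completeness separately, following the Lindenbaum--Tarski recipe adapted to the hybrid setting. \emph{Soundness} is routine: one checks that each axiom of $\mathbf{H}$ yields an equation true in every hybrid algebra, and that the orthodox rules preserve truth of equations. The only nonroutine point is ($\mathit{Nom}$), which holds because on a hybrid algebra the nominals are interpreted as atoms: if $v(\nomi) = x \in X_A$ then $x \leq \Diamond^n(x \wedge a)$ forces, after unwinding, $x \wedge a \neq \bot$, hence $x \leq a$, hence $x \leq \Box^m(\neg x \vee a)$; and ($\mathit{NameLite}$) is sound because $X_A \neq \varnothing$ so $\neg\nomi$ is never $\top$. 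Soundness with respect to the subclass validating $\Sigma$ then follows since every member of $\Sigma$ is true in each algebra of $\mathsf{HA}(\Sigma)$ by definition and the derivable equations are generated from axioms and $\Sigma$ by truth-preserving rules.

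For \emph{completeness}, I would argue contrapositively: suppose $\not\vdash_{\mathbf{H}\oplus\Sigma}\phi$; I must produce a hybrid algebra validating $\Sigma$ on which $\phi \approx \top$ fails. First build the \emph{orthodox} Lindenbaum--Tarski algebra $\mathbf{L}$ of $\mathbf{H}\oplus\Sigma$, treating nominals as constants $s_\nomi = [\nomi]$; this is a BAO with constants, it validates $\Sigma$ and the orthodox nominal inequality, and $[\phi] \neq \top$ in it. The obstacle, flagged in the text, is that $[\nomi]$ need not be an atom of $\mathbf{L}$, so $(\mathbf{L},\{[\nomi]\}_\nomi)$ is not a hybrid algebra. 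The remedy is to pass to a suitable extension/quotient in which the classes of nominals \emph{do} become atoms: I expect the lemmas referred to after the theorem to perform exactly this ``sculpting,'' presumably by taking the canonical extension $\mathbf{L}^\delta$ (so that atoms abound and adjoints $\Box^{-1},\Diamond^{-1}$ exist), identifying for each $\nomi$ an atom $x_\nomi \leq [\nomi]$, and then cutting down to the subalgebra generated appropriately — or dually, using the two-sorted ultrafilter frame $\mathbf{L}_*$ whose admissible-points set $B$ records the principal ultrafilters and whose dual $(\mathbf{L}_*)^*$ is a genuine hybrid algebra. One then sets $X_A$ to be the set of these designated atoms and checks $X_A \neq \varnothing$ (using ($\mathit{NameLite}$): some $[\nomi] \neq \bot$, so an atom below it exists) and that the ($\mathit{Nom}$) axiom guarantees the designated elements behave like names.

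The remaining work is to transport the refuting assignment: given the orthodox assignment witnessing $[\phi] \neq \top$ in $\mathbf{L}$, I would show by induction on $\mathcal{H}$-formulas that the image of $\phi$ under the corresponding hybrid assignment (sending each nominal to its designated atom) is again $\neq \top$ in the constructed hybrid algebra, so $\mathfrak{A} \not\models \phi \approx \top$ while $\mathfrak{A} \models \Sigma$ (validity of $\Sigma$ being preserved by the canonical-extension / dual-frame construction, since $\Sigma$ is derivable and hence true in $\mathbf{L}$, and canonical extensions preserve validity of the relevant form of equations — this is where appeal to the lemmas on canonical extensions and adjoints enters). The main difficulty throughout is thus \emph{not} any single calculation but the construction of the hybrid algebra in which equivalence classes of nominals are realized as atoms while still validating all of $\Sigma$ and refuting $\phi$; once the requisite lemmas supply that algebra together with the translation of assignments, the theorem follows by stitching soundness and this refutation together.
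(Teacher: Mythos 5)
Your overall direction matches the paper's: start from the orthodox Lindenbaum--Tarski algebra $\mathbf{A}$ with nominals as constants, pass to the canonical extension to gain atoms and the adjoint $\Diamond^{-1}$, and then manufacture a hybrid algebra in which the nominal interpretations are atoms. But there are two genuine gaps. First, your claim that validity of $\Sigma$ is ``preserved by the canonical-extension / dual-frame construction'' is false for arbitrary $\Sigma$, and the paper says so explicitly: only the axioms of $\mathbf{H}$ are Sahlqvist under the orthodox reading; an arbitrary $\Sigma$ need not be canonical, so $\mathbf{A}^{\delta}$ (and hence anything you carve out of it, or the dual of the ultrafilter frame) may fail $\Sigma^{\approx}$. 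The paper's way around this is the one concrete construction your sketch lacks: fix an atom $d \leq [\neg\phi]$ of $\mathbf{A}^{\delta}$, set $D = \bigvee_{n}(\Diamond^{-1})^{n}d$, and relativize to $D$, taking $A_D = \{a \wedge D \mid a \in A\}$ with the operations cut down by $D$. The point is that $h(a) = a \wedge D$ is a surjective homomorphism from $\mathbf{A}$ onto $\mathbf{A}_D$ (this needs $D \leq \Box D$, which is where adjunction is actually used), so $\mathbf{A}_D$ inherits all of $\mathbf{H}\oplus\Sigma^{\approx}$ from $\mathbf{A}$ with no appeal to canonicity, while still refuting $\phi$ because $d \leq [\neg\phi] \wedge D$. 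Moreover it is this specific choice of $D$ (everything reachable from the single atom $d$), combined with the $(\mathit{Nom})$ axiom, that yields the key lemma that each $s_{\nomi} \wedge D$ is an atom or $\bot$; merely choosing some atom $x_{\nomi} \leq [\nomi]$ in $\mathbf{A}^{\delta}$, as you propose, gives no control over $\Sigma$ and does not make the constants themselves atoms of any algebra you have actually named.

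Second, you have not addressed the possibility that $s_{\nomi} \wedge D = \bot$ for some or all nominals $\nomi$, which can happen because the relativization may kill a nominal's interpretation even though $(\mathit{NameLite})$ guarantees $[\nomi] > \bot$ in $\mathbf{A}$. The paper needs a three-way case split here: if all $s_{\nomi}^{D}$ are nonzero one is done; if some but not all are zero, one passes to the product $\frak{A}_D \times \frak{A}_D$ and uses the preservation result for grounded hybrid algebras (Proposition \ref{Prop:Product:Pres}) to reroute the dead nominals to live atoms in the other factor; and if all are zero, one builds a second algebra $\mathbf{A}_{D'}$ from an atom $d' \leq [\nomj]$ (nonzero by $(\mathit{NameLite})$) and takes the product $\frak{A}_D \times \frak{A}_{D'}$. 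This machinery of grounded and degenerate hybrid algebras and their products is precisely what Section \ref{sec:AlgbraicSem} sets up, and without it your construction does not in general terminate in a hybrid algebra refuting $\phi$.
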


\begin{proof}
It is straightforward to check the soundness direction of the above. For the completeness direction, we prove the contrapositive. So suppose $\nvdash_{\mathbf{H}\oplus\Sigma} \phi$. We need to find a hybrid algebra $\frak{A}$ and an assignment $v$ such that $\frak{A}, v \not\models \phi \approx \top$. For the purpose of this proof, we will temporarily treat the nominals as modal constants and work with orthodox interpretations of $\mathcal{H}$.

Now, consider the orthodox Lindenbaum-Tarski algebra of $\textbf{H}\oplus\Sigma$ over $\mathsf{PROP}$, i.e., the usual Lindenbaum-Tarski algebra (see e.g. \cite{BdRV01}) of the logic $\textbf{H}\oplus\Sigma$ with the nominals treated like constants or $0$-ary modalities.  For the sake of brevity, we will denote this algebra simply by $\mathbf{A}$. Note that $[\neg\phi] > \bot$ in $\mathbf{A}$, for suppose not, then $\neg\phi$ must be provably equivalent to $\bot$, i.e. $\vdash_{\mathbf{H}\oplus\Sigma} \neg\phi \leftrightarrow \bot$. Hence, $\vdash_{\mathbf{H}\oplus\Sigma} \top \to \phi$ and therefore $\vdash_{\mathbf{H}\oplus\Sigma} \phi$, which is a contradiction.

The fact that $\mathbf{A}$ validates precisely the theorems of $\textbf{H}\oplus\Sigma$ is proved in the usual way. To see that $\mathbf{A} \not\models \phi \approx \top$, let $\nu$ be the map $\nu$: $\mathsf{PROP} \to A$ defined by $\nu(p) = [p]$. It can easily be verified by straightforward structural induction that $\widetilde{\nu}(\psi) =[\psi]$ for all formulas $\psi$ that use variables from the set $\mathsf{PROP}$. But then $\tilde{\nu}(\phi) = [\phi] \neq [\top] = \tilde{\nu}(\top)$, for otherwise, $[\phi] = [\top]$, which means that $[\neg\phi] = [\bot]$, a contradiction.

Next, consider the canonical extension $\mathbf{A}^{\delta}$ of $\mathbf{A}$ (as a BAO). First, note that since all axioms of $\textbf{H}$ are Sahlqvist under the orthodox interpretation, it follows from the canonicity of Sahlqvist equations that $\mathbf{A}^{\delta} \models \mathbf{H}^\approx$. However, the validity of the equations in $\Sigma^{\approx}$ is not necessarily preserved in passing from $\mathbf{A}$ to $\mathbf{A}^{\delta}$.

As mentioned at the end of Section \ref{sec:prelim}, $\Diamond$ and $\Box$ have right and left adjoints in $\mathbf{A}^{\delta}$, denoted by $\Box^{-1}$ and $\Diamond^{-1}$, respectively. 

Since $[\neg\phi] > \bot$ in $\mathbf{A}^{\delta}$ and $\mathbf{A}^{\delta}$ is atomic, there is some atom $d$ in $\mathbf{A}^{\delta}$ such that $d \leq [\neg\phi]$. Next define a sequence $d_0, d_1, d_2, \ldots$ of elements of $\mathbf{A}^{\delta}$, beginning with $d_0 = d$. Suppose $d_n$ is already defined, then let $d_{n + 1} = \Diamond^{-1}d_n$. Let
\[
D = \bigvee_{n \in \mathbb{N}}d_n.
\]
Define $\mathbf{A}_D = (A_D, \wedge^D, \vee^D, \neg^D, \bot^D, \top^D, \Diamond^D, \{s_{\nomj}^D\}_{\nomj \in \mathsf{NOM}})$,
where $A_D = \{a \wedge D \mid a \in A\}$, $\wedge^D$ and $\vee^D$ are the restriction of $\wedge$ and $\vee$ to $A_D$, and
\begin{align*}
\neg^D a &= \neg a \wedge D & \Diamond^D a &= \Diamond a \wedge D & s_{\nomj}^{D} &= s_{\nomj} \wedge D \\
\top^D &= D & \bot^D &= \bot
\end{align*}

Now, by Lemma \ref{Lemma:A_D:closed}, $\mathbf{A}_D$ is an algebra. We also know from Lemma \ref{Lemma:h:homomorphism} that $\mathbf{A}_D \models \textbf{H}\oplus\Sigma^{\approx}$. To see that $\mathbf{A}_D \not\models \phi \approx \top$, consider the assignment $\nu_D$: $\mathsf{PROP} \to A_D$ given by $\nu_D(p) = h(\nu(p))$, where $h$ is the homomorphism from $\mathbf{A}$ onto $\mathbf{A}_D$ defined in Lemma \ref{Lemma:h:homomorphism}. We can show by structural formula induction that $\tilde{\nu_D}(\psi) = h(\tilde{\nu}(\psi))$ for all $\mathcal{H}$-formulas $\psi$ that use variables from $\mathsf{PROP}$. Now, since $d \leq D$ and $d \leq [\neg\phi]$, $\tilde{\nu_D}(\neg\phi) = h(\tilde{\nu}(\neg\phi)) = \tilde{\nu}(\neg\phi) \wedge D = [\neg\phi] \wedge D \geq d > \bot$
Hence, $\tilde{\nu_D}(\neg\phi) \neq \bot^D$, and so $\tilde{\nu_D}(\phi) \neq D$, i.e., $\tilde{\nu_D}(\phi) \neq \tilde{\nu_D}(\top)$.

Next we want to produce a \emph{hybrid} algebra out of $\mathbf{A}_{D}$ which also refutes $\phi$. The desired result could be obtained if we could show that the constant interpretations of the nominals in $\mathbf{A}_{D}$ are atoms. This would allow us to drop the constants from the signature of $\mathbf{A}_D$ and replace them with a designated set of atoms consisting of all $s^D_{\nomi}$, $\nomi \in \mathsf{NOM}$. However, this is not necessarily the case. By Lemma \ref{s^_nomj:atoms:or:bot}, $s^D_{\nomi}$ is either an atom or $\bot$. So let $\frak{A}_{D} = (\mathbf{A}^-_{D}, X_{A_D})$, where $\mathbf{A}^-_{D}$ is the modal algebra reduct of $\mathbf{A}_{D}$ obtained by omitting the constant interpretations of nominals, and $X_{A_D} = \{s_{\nomi}^D \mid s_{\nomi}^D > \bot \}$. But this is still not necessarily a hybrid algebra since it is possible that $X_{A_D}$ can be empty. In fact, we have three possibilities, corresponding to the following three cases:

\paragraph{Case 1:} $s_{\nomi}^D > \bot$ for all $\nomi \in \mathsf{NOM}$. This is the simplest case. Since $X_{A_D} \neq \varnothing$, it follows from the foregoing that $\frak{A}_{D}$ is a hybrid algebra. Furthermore, since $\Sigma$ is closed under ($\mathit{Sorted}$ $\mathit{substitution}$), $\frak{A}_{D} \models \Sigma^{\approx}$. To see that $\frak{A}_D \not\models \phi \approx \top$, consider the assignment $\nu'_D$ which extends $\nu_D$ from $\mathsf{PROP}$ to $\mathsf{PROP} \cup \mathsf{NOM}$, obtained by simply setting $\nu'_D(\nomi) = s_{\nomi}^D$ for each $\nomi \in \mathsf{NOM}$. It is clear that $\widetilde{\nu}'_D(\psi) = \widetilde{\nu}_D(\psi)$ for all $\mathcal{H}$-formulas $\psi$, and hence $\widetilde{\nu}'_D(\phi) \neq \widetilde{\nu}'_D(\top)$.

\paragraph{Case 2:} $s_{\nomi}^D = \bot$ for some $\nomi \in \mathsf{NOM}$ but not all. From the foregoing, we know that $(\frak{A}_D)_0 \models \mathbf{H}\oplus\Sigma^{\approx}$, and hence, by Proposition \ref{Prop:Product:Pres}, $\mathfrak{A}_D \times \mathfrak{A}_D \models \textbf{H}\oplus\Sigma^{\approx}$. Now, let $\nomj \in \mathsf{NOM}$ such that $s_{\nomj}^D \neq \bot$.
%
Consider the assignment $\nu''_D$ obtained by setting $\nu''_D(p) = (\nu_D(p),\nu_D(p))$ for all propositional variables $p \in \mathsf{PROP}$, and
\[
\nu''_D(\nomi) = \left\{ \begin{array}{ll}
(s_{\nomi}^D, \bot) & \textnormal{ if } s_{\nomi}^{D} > \bot \\
(\bot, s_{\nomj}^D) & \textnormal{ if } s_{\nomi}^{D} = \bot
\end{array} \right.
\]
for all nominals $\nomi \in \textsf{NOM}$. It is straightforward to show (using structural induction) that for any $\mathcal{H}$-formula $\psi$, we have $\widetilde{\nu}''_D(\psi) = (\widetilde{\nu}_D(\psi), a_{\psi})$, where $a_{\psi}$ is some element of $\mathfrak{A}_D$. But then $\widetilde{\nu}''_D(\phi) = (\widetilde{\nu}_D (\phi), a_\phi) \neq (\widetilde{\nu}_D(\top), D) = \widetilde{\nu}''_D(\top)$ since $\widetilde{\nu}_D(\phi) \neq \widetilde{\nu}_D(\top)$.

\paragraph{Case 3:} $s_{\nomi}^D = \bot$ for all $\nomi \in \mathsf{NOM}$. In this case, $X_{A_D} = \varnothing$, so $\frak{A}_D$ is not a hybrid algebra. So we need another strategy for constructing a hybrid algebra that will work here. First, we claim that $[\nomi] > \bot$ in $\mathbf{A}$ for all $\nomi \in \mathsf{NOM}$. To see this, suppose $[\nomi] = \bot$. Then $\vdash \nomi \leftrightarrow \bot$, and so $\vdash \neg\nomi \leftrightarrow \top$. Hence, $\vdash \neg\nomi$, which means that $\vdash \bot$ by the (\textit{NameLite}) rule. However, this is a contradiction. We thus also have that $[\nomi] > \bot$ in $\mathbf{A}^{\delta}$ for all $\nomi \in \mathsf{NOM}$. So choose some nominal $\nomj \in \mathsf{NOM}$. Since $\mathbf{A}^{\delta}$ is atomic, there is an atom $d'$ in $\mathbf{A}^{\delta}$ such that $d' \leq [\nomj]$.
Now, using $d'$ instead of $d$, we define $D'$ and $\mathbf{A}_{D'}$ in the same way as $D$ and $\mathbf{A}_D$, i.e. by setting $d'_0 = d'$, $d'_{n+1} = \Diamond^{-1} d'_{n}$ and $D' = \bigvee_{n \in \mathbb{N}} d'_n$. In the same way as for $\mathbf{A}_D$, we can prove that $\mathbf{A}_{D'} \models \textbf{H}\oplus\Sigma^{\approx}$ and that $s_{\nomi}^{D'}$ is either $\bot$ or an atom of $\mathbf{A}_{D'}$. Define $\nu_{D'}$ in the same way as $\nu_D$. Note that we do not know if $\mathbf{A}_{D'}, \nu_{D'} \not\models \phi \approx \top$. But this is not a problem, as we will soon see. Now, let $\frak{A}_{D'} = (\mathbf{A}^-_{D'}, X_{A_{D'}})$ where $\mathbf{A}^-_{D'}$ is the reduct of $\mathbf{A}_{D'}$ obtained by omitting the constant interpretations of nominals and $X_{A_{D'}} = \{s_{\nomi}^{D'} \mid s_{\nomi}^{D'} > \bot \}$. We know that $X_{A_{D'}} \neq \varnothing$ since at least $s_{\nomj}^{D'} \neq \bot$. Furthermore, $(\frak{A}_{D'})_{0} \models \textbf{H}\oplus\Sigma^{\approx}$, and so, since $(\mathfrak{A}_D)_0 \models \textbf{H}\oplus\Sigma^{\approx}$, $\frak{A}_D \times \frak{A}_{D'} \models \textbf{H}\oplus\Sigma^{\approx}$ by Proposition \ref{Prop:Product:Pres}. To show that $\frak{A}_D \times \frak{A}_{D'} \not\models \phi \approx \top$, let $\nu'''$ be defined by $\nu'''_D(p) = (\nu_D(p),\nu_{D'}(p))$ for all propositional variables $p \in \mathsf{PROP}$, and
\[
\nu'''_D(\nomi) = \left\{ \begin{array}{ll}
(\bot, s_{\nomi}^{D'}) & \textnormal{ if } s_{\nomi}^{D'} > \bot \\
(\bot, s_{\nomj}^{D'}) & \textnormal{ if } s_{\nomi}^{D'} = \bot
\end{array} \right.
\]
for all nominals $\nomi \in \textsf{NOM}$. Using structural induction, we can show that for all $\mathcal{H}$-formulas $\psi$, $\widetilde{\nu}'''_{D}(\psi) = (\widetilde{\nu}_D(\psi), a'_\psi)$, where $a'_{\psi}$ is some element in $\frak{A}_{D'}$. But then $\widetilde{\nu}'''_D(\phi) = (\widetilde{\nu}_D(\phi), a'_\phi) \neq (\widetilde{\nu}_D(\top), D') = \widetilde{\nu}'''_D(\top)$ since $\widetilde{\nu}_D(\phi) \neq \widetilde{\nu}_D(\top)$.	
\end{proof}

We will now prove the lemmas used in the proof of the above theorem. Unless stated otherwise, in what follows $\mathbf{A}$, $\mathbf{A}^{\delta}$, $\mathbf{A_D}$, $\nu$ and $\nu_D$ will be as in the proof of Theorem \ref{Thm:HA:Comp}. The first lemma we need is that $\mathbf{A}_D$ is an algebra. To prove this, we have to show that $A_D$ is closed under the operations defined in the proof of Theorem \ref{Thm:HA:Comp}. But first we need the following lemma:

\begin{lemma}\label{Lemma:DleqBoxD}
$D \leq \Box D$
\end{lemma}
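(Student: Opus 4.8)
The plan is to use nothing more than the adjunction $\Diamond^{-1} \dashv \Box$ recalled at the end of Section~\ref{sec:prelim}, together with the definition $d_{n+1} = \Diamond^{-1} d_n$. Recall that $\Diamond^{-1} \dashv \Box$ means that for all $a, b \in \mathbf{A}^{\delta}$ we have $\Diamond^{-1} a \leq b$ if and only if $a \leq \Box b$.

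First I would reduce the claim $D \leq \Box D$ to a statement about the individual $d_n$. Since $D = \bigvee_{n \in \mathbb{N}} d_n$ and a join lies below an element precisely when each joinand does, it suffices to prove that $d_n \leq \Box D$ for every $n \in \mathbb{N}$. Note that here I do \emph{not} need $\Box$ to distribute over the join $D$; I only push each $d_n$ through the adjunction separately.

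Next, for a fixed $n$, applying the adjunction $\Diamond^{-1} \dashv \Box$ with $a = d_n$ and $b = D$ gives
\[
d_n \leq \Box D \quad \Longleftrightarrow \quad \Diamond^{-1} d_n \leq D.
\]
By definition $\Diamond^{-1} d_n = d_{n+1}$, and $d_{n+1} \leq D = \bigvee_{m \in \mathbb{N}} d_m$ holds trivially, since $d_{n+1}$ is one of the terms of this join. Hence $d_n \leq \Box D$ for all $n$, and therefore $D = \bigvee_{n \in \mathbb{N}} d_n \leq \Box D$, as required.

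I do not anticipate any real obstacle: the argument is a one-line application of the adjunction once the reduction to the joinands is made. The only point to be careful about is the direction of the adjunction (it is $\Diamond^{-1} \dashv \Box$, so $\Diamond^{-1} a \leq b \iff a \leq \Box b$, \emph{not} the reverse), and the fact that we never need $\Box$ to commute with the infinite join defining $D$.
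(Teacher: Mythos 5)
Your proof is correct and takes essentially the same route as the paper's: both hinge on the adjunction $\Diamond^{-1}\dashv\Box$ together with the fact that $\Diamond^{-1}d_n = d_{n+1}$ is itself one of the joinands of $D$. The only difference is cosmetic --- the paper passes through the monotonicity of $\Box$ and the unit inequality $d_i \leq \Box\Diamond^{-1}d_i$, whereas you apply the adjunction equivalence directly to each $d_n$, which is marginally cleaner since it never manipulates $\Box$ applied to the infinite join.
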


\begin{proof}
By the definition of $D$, $\Box D = \Box \bigvee_{n \in \mathbb{N}} d_n$, and so, since $\Box$ is monotone, $\Box D \geq \bigvee_{n \in \mathbb{N}} \Box d_n$. But by the definition of $d_n$, $\bigvee_{n \in \mathbb{N}} \Box d_n = \left(\bigvee_{n \in \mathbb{N} - \{0\}}\Box\Diamond^{-1}d_{n - 1}\right) \vee \Box d_0$, so $\Box D \geq \left(\bigvee_{n \in \mathbb{N} - \{0\}}\Box\Diamond^{-1}d_{n - 1}\right) \vee \Box d_0$. Hence, $\Box D \geq \left(\bigvee_{n \in \mathbb{N} - \{0\}} \Box  \Diamond^{-1} d_{n - 1}\right)$. Since $\Box$ and $\Diamond^{-1}$ are adjoint, we have $\Box \Diamond^{-1} d_i \geq d_i$, and therefore $\Box D \geq  \bigvee_{n \in \mathbb{N} - \{0\}} d_{n - 1}$. We thus have that $\Box D \geq D$ by the definition of $D$.
\end{proof}

\begin{lemma}\label{Lemma:A_D:closed}
$A_D$ is closed under the operations $\wedge^D$, $\vee^D$, $\neg^D$, and $\Diamond^D$.
\end{lemma}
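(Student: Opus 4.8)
The plan is to check each operation in turn, recalling that $A_D = \{a \wedge D \mid a \in A\}$ and that the operations on $A_D$ are the restrictions of the corresponding operations on $\mathbf{A}^{\delta}$, except for $\neg^D$ and $\Diamond^D$ which are ``relativised'' by intersecting with $D$. The meet $\wedge^D$ is immediate: if $a \wedge D$ and $b \wedge D$ lie in $A_D$, then their meet is $(a \wedge b) \wedge D$, and $a \wedge b \in A$ since $A$ is a subalgebra, so the result is again of the required form. The join $\vee^D$ is equally easy using distributivity in the Boolean algebra $\mathbf{A}^{\delta}$: $(a \wedge D) \vee (b \wedge D) = (a \vee b) \wedge D \in A_D$. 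For $\neg^D$, note that $\neg^D(a \wedge D) = \neg(a \wedge D) \wedge D$; expanding $\neg(a \wedge D) = \neg a \vee \neg D$ and meeting with $D$ gives $(\neg a \wedge D) \vee (\neg D \wedge D) = \neg a \wedge D$, which is in $A_D$ because $\neg a \in A$. (One should also record that $\top^D = D = \top \wedge D \in A_D$ and $\bot^D = \bot = \bot \wedge D \in A_D$, so the nullary operations are covered.)

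The one operation requiring the preceding lemma is $\Diamond^D$. Here I would argue: for $a \wedge D \in A_D$ we have $\Diamond^D(a \wedge D) = \Diamond(a \wedge D) \wedge D$. Since $\Diamond$ is monotone, $\Diamond(a \wedge D) \leq \Diamond a$, so $\Diamond(a \wedge D) \wedge D \leq \Diamond a \wedge D$. For the reverse inclusion I want $\Diamond a \wedge D \leq \Diamond(a \wedge D)$. This is where Lemma \ref{Lemma:DleqBoxD} enters: from $D \leq \Box D$ and the fact that $\mathbf{A}^{\delta}$ is a BAO one has the standard modal inequality $\Diamond a \wedge \Box D \leq \Diamond(a \wedge D)$ (this is an instance of the $\mathbf{K}$-valid $\Diamond a \wedge \Box b \leq \Diamond(a \wedge b)$), whence $\Diamond a \wedge D \leq \Diamond a \wedge \Box D \leq \Diamond(a \wedge D)$. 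Combining the two inequalities yields $\Diamond^D(a \wedge D) = \Diamond(a \wedge D) \wedge D = \Diamond a \wedge D$, which is manifestly in $A_D$ since $\Diamond a \in A$.

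The main (and only real) obstacle is the $\Diamond^D$ case, and specifically verifying the inclusion $\Diamond a \wedge D \leq \Diamond(a \wedge D)$; everything else is a routine Boolean computation. That inclusion is handled cleanly by invoking $D \leq \Box D$ (Lemma \ref{Lemma:DleqBoxD}) together with the basic normal-modal inequality relating $\Diamond$ of a meet to the meet of a $\Diamond$ and a $\Box$; since $\mathbf{A}^{\delta}$ is a BAO, that inequality is available. I would also remark in passing that this computation shows each operation on $A_D$ actually agrees with the ambient operation of $\mathbf{A}^{\delta}$ followed by meeting with $D$, which is precisely what makes the forthcoming map $h\colon \mathbf{A} \to \mathbf{A}_D$, $a \mapsto a \wedge D$, a homomorphism in Lemma \ref{Lemma:h:homomorphism}.
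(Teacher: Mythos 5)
Your proposal is correct and follows essentially the same route as the paper: the Boolean cases are routine, $\neg^D(a\wedge D)$ reduces to $\neg a \wedge D$ by distributivity, and the $\Diamond^D$ case is settled by combining monotonicity of $\Diamond$ with the inequality $\Diamond a \wedge \Box D \leq \Diamond(a\wedge D)$ and Lemma~\ref{Lemma:DleqBoxD}, exactly as in the paper's proof. The only (harmless) differences are that you spell out the $\wedge^D$, $\vee^D$ and nullary cases which the paper dismisses as straightforward.
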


\begin{proof}
The cases for $\wedge^D$ and $\vee^D$ are straightforward, so we consider the cases for $\neg^D$ and $\Diamond^{D}$. Let $a \in A_{D}$. Then $a = a' \wedge D$ for some $a' \in A$. Firstly, $\neg^Da = \neg(a' \wedge D) \wedge D = (\neg a' \vee \neg D) \wedge D = (\neg a' \wedge D) \vee \bot = \neg a' \wedge D$. But $A$ is closed under $\neg$, so $\neg a' \in A$, which means that $\neg^D a \in A_D$.

Next, $\Diamond^Da = \Diamond(a' \wedge D) \wedge D \leq \Diamond a' \wedge D$,
and conversely, $\Diamond^Da = \Diamond(a' \wedge D) \wedge D \geq \Diamond a' \wedge \Box D \wedge D = \Diamond a' \wedge D$,
where the last step follows from Lemma \ref{Lemma:DleqBoxD}. Hence, $\Diamond^Da = \Diamond a' \wedge D$, and so, since $A$ is closed under $\Diamond$, $\Diamond a' \in A$. We thus have that $\Diamond^Da \in A_{D}$.
\end{proof}

To show that the algebra $\mathbf{A}_{D}$ validates the equations in $\mathbf{H}\oplus\Sigma^{\approx}$, we prove that $\mathbf{A}_{D}$ is a homomorphic image of $\mathbf{A}$.

\begin{lemma}\label{Lemma:h:homomorphism}
The map $h$: $A \to A_D$ defined by $h(a) = a \wedge D$ is a surjective homomorphism from $\mathbf{A}$ onto $\mathbf{A}_D$.
\end{lemma}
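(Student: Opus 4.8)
The plan is to verify directly that the map $h(a) = a \wedge D$ respects each operation of the algebra $\mathbf{A}_D$. Surjectivity is immediate from the definition of $A_D = \{a \wedge D \mid a \in A\}$: every element of $A_D$ has the form $a \wedge D = h(a)$ for some $a \in A$. So the substance of the lemma is that $h$ is a homomorphism, i.e. that it commutes with $\bot$, $\top$, $\wedge$, $\vee$, $\neg$, $\Diamond$, and each constant $s_{\nomj}$.

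First I would dispense with the easy cases. We have $h(\bot) = \bot \wedge D = \bot = \bot^D$ and $h(\top) = \top \wedge D = D = \top^D$. For meet, $h(a \wedge b) = (a \wedge b)\wedge D = (a \wedge D)\wedge(b \wedge D) = h(a) \wedge^D h(b)$, using idempotence of $\wedge$ and that $\wedge^D$ is the restriction of $\wedge$. For join, $h(a \vee b) = (a \vee b)\wedge D = (a \wedge D) \vee (b \wedge D) = h(a) \vee^D h(b)$ by distributivity. For the negation, the computation carried out inside the proof of Lemma \ref{Lemma:A_D:closed} already shows $\neg^D(a \wedge D) = \neg a \wedge D$; hence $h(\neg a) = \neg a \wedge D = \neg^D(a \wedge D) = \neg^D h(a)$. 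For each constant, $h(s_{\nomj}) = s_{\nomj} \wedge D = s_{\nomj}^D$ by definition.

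The one step that actually requires Lemma \ref{Lemma:DleqBoxD} — and which I expect to be the only nontrivial point — is the diamond. I would argue exactly as in the $\Diamond^D$ case of Lemma \ref{Lemma:A_D:closed}: on the one hand $\Diamond^D(a \wedge D) = \Diamond(a \wedge D)\wedge D \leq \Diamond a \wedge D$ by monotonicity of $\Diamond$; on the other hand, using $D \leq \Box D$ from Lemma \ref{Lemma:DleqBoxD} together with the normal-modal inequality $\Diamond a \wedge \Box D \leq \Diamond(a \wedge D)$ (valid in every BAO), we get $\Diamond a \wedge D = \Diamond a \wedge \Box D \wedge D \leq \Diamond(a \wedge D) \wedge D = \Diamond^D(a \wedge D)$. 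Combining the two inequalities, $\Diamond^D h(a) = \Diamond^D(a \wedge D) = \Diamond a \wedge D = h(\Diamond a)$. This completes the verification that $h$ is a surjective homomorphism from $\mathbf{A}$ onto $\mathbf{A}_D$; as a byproduct, since homomorphic images preserve validity of equations and $\mathbf{A} \models \mathbf{H}\oplus\Sigma^{\approx}$, we obtain $\mathbf{A}_D \models \mathbf{H}\oplus\Sigma^{\approx}$, which is the use made of this lemma in the proof of Theorem \ref{Thm:HA:Comp}.
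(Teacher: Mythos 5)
Your proof is correct and follows essentially the same route as the paper's: surjectivity is immediate from the definition of $A_D$, the Boolean cases and constants are routine, and the $\Diamond$ case is handled by the same two inequalities, one from monotonicity of $\Diamond$ and one from $D \leq \Box D$ (Lemma \ref{Lemma:DleqBoxD}) combined with the BAO fact $\Diamond a \wedge \Box D \leq \Diamond(a \wedge D)$.
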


\begin{proof}
First, $h$ is clearly surjective. In verifying that $h$ is a homomorphism, all cases except those for $\neg$ and $\Diamond$ are straightforward. The case for $\neg$ is proved as follows:
$h(\neg a) = \neg a \wedge D = (\neg a \wedge D) \vee (\neg D \wedge D) = (\neg a \vee \neg D) \wedge D = \neg(a \wedge D) \wedge D = \neg^{D}h(a)$. The right-to-left inequality for $\Diamond$ is proved as follows:
$h(\Diamond a) = \Diamond a \wedge D \geq \Diamond(a \wedge D) \wedge D = \Diamond h(a) \wedge D = \Diamond^{D}h(a)$,
where the inequality follows from the monotonicity of $\Diamond$. Conversely,
$\Diamond^{D}h(a)  = \Diamond(a \wedge D) \wedge D \geq \Diamond a \wedge \Box D \wedge D = \Diamond a \wedge D = h(\Diamond a)$.
Here the third step follows from Lemma \ref{Lemma:DleqBoxD}.
\end{proof}

Our next goal is to show that the constant interpretation of a nominal is either an atom of $\mathbf{A}_D$ or $\bot$. But first, we need the following result:

\begin{lemma}\label{alD^-1sbiffblDsa: alDisblDsa}
Let $\mathbf{A}$ be a BAO, and let $a$ and $b$ be atoms of the canonical extension $\mathbf{A}^{\delta}$ of $\mathbf{A}$. Then $a \leq (\Diamond^{-1})^{m}b$ iff $b \leq \Diamond^{m}a$.
\end{lemma}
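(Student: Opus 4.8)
The plan is to argue by induction on $m$, with the case $m=1$ carrying all the real content. The key observation is that in $\mathbf{A}^{\delta}$ the operators $\Diamond$ and $\Diamond^{-1}$ are \emph{conjugate}, i.e.\ $\Diamond x \wedge y = \bot$ iff $x \wedge \Diamond^{-1}y = \bot$ for all $x, y$. This is immediate from the adjunction $\Diamond^{-1} \dashv \Box$ together with $\Box = \neg\Diamond\neg$: we have $\Diamond x \wedge y = \bot$ iff $y \leq \neg\Diamond x = \Box\neg x$, iff (by the adjunction) $\Diamond^{-1}y \leq \neg x$, iff $x \wedge \Diamond^{-1}y = \bot$. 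This is just the algebraic shadow of the fact, recalled at the end of Section~\ref{sec:prelim}, that on the ultrafilter frame of $\mathbf{A}$ the operator $\Diamond^{-1}$ is interpreted by the converse of the relation interpreting $\Diamond$.

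For the base case $m=1$, I would use atomicity of $\mathbf{A}^{\delta}$ to pass to the conjugacy statement: since $a$ is an atom and $a \neq \bot$, $a \leq \Diamond^{-1}b$ iff $a \wedge \Diamond^{-1}b \neq \bot$; by conjugacy this holds iff $\Diamond a \wedge b \neq \bot$, and since $b$ is an atom this holds iff $b \leq \Diamond a$. (The case $m = 0$ is trivial, as for atoms $a \leq b$ iff $a = b$ iff $b \leq a$.)

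For the inductive step, assume the claim for $m$ and set $z = (\Diamond^{-1})^{m}b$. Since $\mathbf{A}^{\delta}$ is atomic and $\Diamond^{-1}$ is completely join-preserving (it has $\Box$ as a right adjoint), $\Diamond^{-1}z = \bigvee\{\Diamond^{-1}c \mid c \text{ an atom},\ c \leq z\}$; hence, $a$ being an atom, $a \leq (\Diamond^{-1})^{m+1}b = \Diamond^{-1}z$ iff there is an atom $c \leq z = (\Diamond^{-1})^{m}b$ with $a \leq \Diamond^{-1}c$. By the base case $a \leq \Diamond^{-1}c$ iff $c \leq \Diamond a$, and by the induction hypothesis $c \leq (\Diamond^{-1})^{m}b$ iff $b \leq \Diamond^{m}c$. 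So $a \leq (\Diamond^{-1})^{m+1}b$ iff there is an atom $c$ with $c \leq \Diamond a$ and $b \leq \Diamond^{m}c$. The right-hand side of the lemma unwinds in exactly the same way: $\Diamond^{m}$ is completely join-preserving in $\mathbf{A}^{\delta}$ (a composite of completely join-preserving maps), so $\Diamond^{m+1}a = \Diamond^{m}(\Diamond a) = \bigvee\{\Diamond^{m}c \mid c \text{ an atom},\ c \leq \Diamond a\}$, and therefore, $b$ being an atom, $b \leq \Diamond^{m+1}a$ iff there is an atom $c \leq \Diamond a$ with $b \leq \Diamond^{m}c$ — precisely the condition obtained above. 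This closes the induction.

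I do not expect a serious obstacle here: the proof is essentially bookkeeping. The only point requiring any care is the interplay between atoms and infinite joins when peeling off a layer of $\Diamond^{-1}$ (or of $\Diamond$), which is handled by the atomicity of $\mathbf{A}^{\delta}$ together with the complete additivity of these operators on the canonical extension. (Alternatively, one could bypass the induction by transporting everything to the ultrafilter frame of $\mathbf{A}$, using that there $\Diamond^{-1} = \langle Q_{\Diamond}^{\smile}\rangle$, that $(Q_{\Diamond}^{\smile})^{m} = (Q_{\Diamond}^{m})^{\smile}$, and that atoms of $\mathbf{A}^{\delta}$ correspond to singletons.)
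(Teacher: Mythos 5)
Your proof is correct, but it takes a genuinely different route from the one in the paper. The paper proves the biconditional for arbitrary $m$ in a single non-inductive step: assuming $a \leq (\Diamond^{-1})^{m}b$ and $b \nleq \Diamond^{m}a$, it uses that $b$ is an atom to get $\Diamond^{m}a \leq \neg b$, applies the adjunction $\Diamond \dashv \Box^{-1}$ $m$ times to obtain $a \leq (\Box^{-1})^{m}\neg b = \neg(\Diamond^{-1})^{m}b$, and derives the contradiction $a \leq \neg a$. Your base case $m=1$ is essentially this same computation repackaged as conjugacy, but for general $m$ you instead run an induction that peels off one layer of $\Diamond^{-1}$ (resp.\ $\Diamond$) at a time, decomposing the intermediate element $(\Diamond^{-1})^{m}b$ (resp.\ $\Diamond a$) into atoms. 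This costs you more machinery --- full atomicity of $\mathbf{A}^{\delta}$ and complete additivity of $\Diamond$ and $\Diamond^{-1}$ to justify pulling atoms through infinite joins --- all of which is available since canonical extensions are perfect, so there is no gap; but the paper's argument only uses that $a$ and $b$ themselves are atoms together with the adjunctions and De Morgan duality, and is correspondingly shorter. The payoff of your version is that it makes the relational picture explicit (the intermediate atom $c$ is a witness on the path from $b$ to $a$ in the ultrafilter frame), which is the content of your closing parenthetical remark; the paper keeps everything order-theoretic.
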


\begin{proof}
For the left-to-right implication, let $a$ and $b$ be two atoms in $\mathbf{A}^{\delta}$, and assume \break $a \leq (\Diamond^{-1})^{m}b$. Suppose that $b \nleq \Diamond^{m}a$. Then, since $b$ is an atom, $b \leq \neg\Diamond^{m}a$, and so $\Diamond^{m}a \leq \neg b$. Now applying adjunction $m$ times yields $a \leq (\Box^{-1})^m \neg b$,
which means that $(\Diamond^{-1})^{m}b$ $\leq \neg a$. Hence, by our assumption, $a \leq \neg a$, and so $\bot = a \wedge \neg a = a$, which is a contradiction.

The converse implication is similar.
\end{proof}

\begin{lemma} \label{s^_nomj:atoms:or:bot}
For each $\nomi \in \mathsf{NOM}$, $s_{\nomi}^D$ is either $\bot$ or an atom of $\mathbf{A}^{\delta}$, and hence, of $\mathbf{A}_{D}$.
\end{lemma}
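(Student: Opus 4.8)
### Proof proposal

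The plan is to show that for each nominal $\nomi$, the element $s_\nomi^D = s_\nomi \wedge D$ is either $\bot$ or an atom of $\mathbf{A}^\delta$, from which (being an element of $A_D$) it is automatically $\bot$ or an atom of $\mathbf{A}_D$. The key observation is that $D = \bigvee_{n \in \mathbb{N}} d_n$ where $d_0 = d$ is an atom of $\mathbf{A}^\delta$ and $d_{n+1} = \Diamond^{-1} d_n$. Since $\mathbf{A}^\delta$ is atomic, it suffices to show that any atom $e$ of $\mathbf{A}^\delta$ lying below $s_\nomi \wedge D$ is in fact equal to $s_\nomi \wedge D$; equivalently, that there is \emph{at most one} atom of $\mathbf{A}^\delta$ below $s_\nomi \wedge D$, and if there is one it exhausts $s_\nomi \wedge D$.

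First I would unwind what it means for an atom $e \leq D$. Because $e$ is an atom and $D = \bigvee_n d_n$, atomicity (actually join-irreducibility of atoms in the complete algebra, together with the fact that atoms are \emph{completely} join-prime in a perfect algebra — or one can argue via compactness of the canonical extension) gives that $e \leq d_n$ for some $n$, hence $e = d_n$ since $d_n \leq d_0 = d$ need not hold, but wait: actually I should be careful — the $d_n$ need not be atoms. Let me instead use Lemma \ref{alD^-1sbiffblDsa: alDisblDsa}. The cleaner route: $e \leq D = \bigvee_n (\Diamond^{-1})^n d$, and since $e$ is an atom of the perfect algebra $\mathbf{A}^\delta$ it is completely join-prime, so $e \leq (\Diamond^{-1})^n d$ for some $n \in \mathbb{N}$. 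Now apply Lemma \ref{alD^-1sbiffblDsa: alDisblDsa} with $a = e$, $b = d$, $m = n$: since $d$ is an atom and $e$ is an atom, $e \leq (\Diamond^{-1})^n d$ is equivalent to $d \leq \Diamond^n e$.

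Next I would use this to pin down $e$ when additionally $e \leq s_\nomi$. Suppose $e_1, e_2$ are two atoms of $\mathbf{A}^\delta$ with $e_1, e_2 \leq s_\nomi \wedge D$. By the above, $d \leq \Diamond^{n_1} e_1$ and $d \leq \Diamond^{n_2} e_2$ for suitable $n_1, n_2$. Here is where the axiom $(\mathit{Nom})$ enters: recall $\mathbf{A}^\delta \models \mathbf{H}^\approx$, so in particular $\mathbf{A}^\delta$ validates $\Diamond^n(s_\nomi \wedge a) \leq \Box^m(\neg s_\nomi \vee a)$ for all $n,m$, under the orthodox interpretation. Taking $a = e_1$ (so $s_\nomi \wedge e_1 = e_1$ since $e_1 \leq s_\nomi$), we get $\Diamond^{n_1} e_1 \leq \Box^{n_2}(\neg s_\nomi \vee e_1)$. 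Since $d \leq \Diamond^{n_1} e_1 \leq \Box^{n_2}(\neg s_\nomi \vee e_1)$, adjunction (applied $n_2$ times, passing from $\Box^{n_2}$-statements about $d$ to $\Diamond^{n_2}$-reachability) gives that everything $\Diamond^{n_2}$-reachable from... — more directly: from $d \leq \Box^{n_2}(\neg s_\nomi \vee e_1)$ and $d \leq \Diamond^{n_2} e_2$, we get, by the standard interplay $\Box^m x \wedge \Diamond^m y \neq \bot \Rightarrow x \wedge y' \neq \bot$ for some appropriate $y' \leq y$ — cleaner in the perfect algebra: $\Diamond^{n_2} e_2 \wedge \Box^{n_2}(\neg s_\nomi \vee e_1) \neq \bot$ forces $e_2 \wedge (\neg s_\nomi \vee e_1) \neq \bot$ (this is the contrapositive of: if $e_2 \leq s_\nomi \wedge \neg e_1$ then $\Diamond^{n_2} e_2 \leq \Diamond^{n_2}(\neg(\neg s_\nomi \vee e_1)) \leq \neg \Box^{n_2}(\neg s_\nomi \vee e_1)$). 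Since $e_2 \leq s_\nomi$, this means $e_2 \wedge e_1 \neq \bot$, hence $e_1 = e_2$ as both are atoms. Thus there is at most one atom below $s_\nomi \wedge D$; since $\mathbf{A}^\delta$ is atomic, either $s_\nomi \wedge D = \bot$ or $s_\nomi \wedge D$ equals that unique atom, so it is an atom. As $s_\nomi^D \in A_D$, it is $\bot$ or an atom of $\mathbf{A}_D$ as well.

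The main obstacle I anticipate is the bookkeeping around "completely join-prime" — making rigorous the step $e \leq \bigvee_n (\Diamond^{-1})^n d \Rightarrow e \leq (\Diamond^{-1})^n d$ for some $n$. In a perfect (i.e. complete and atomic, with atoms completely join-prime) BAO this is immediate, and the paper notes $\mathbf{A}^\delta$ is perfect; alternatively one can invoke compactness of the canonical extension applied to $e = \bigwedge\{e\} \leq \bigvee\{(\Diamond^{-1})^n d : n\}$, but one must know $e$ is closed (it is, being an atom it is a join of a single element of... actually atoms of $\mathbf{A}^\delta$ need a short argument, or just use perfection). The second delicate point is getting the orthodox axiom $(\mathit{Nom})$ to interact correctly with iterated $\Box$/$\Diamond$ and adjunction; I would state the needed fact "$\Diamond^m x \wedge \Box^m y \neq \bot$ implies $x \wedge y \neq \bot$ in a perfect BAO" (really its contrapositive) as a one-line sub-observation and apply it, rather than re-deriving it inline.
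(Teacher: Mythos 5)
Your proposal is correct and follows essentially the same route as the paper: reduce to showing there is at most one atom of $\mathbf{A}^{\delta}$ below $s_{\nomi}\wedge D$, use complete join-primeness of atoms to get $e \leq (\Diamond^{-1})^{n}d$, convert this to $d \leq \Diamond^{n}e$ via Lemma \ref{alD^-1sbiffblDsa: alDisblDsa}, and then invoke the validity of $(\mathit{Nom})$ in $\mathbf{A}^{\delta}$ to force the two atoms to coincide. The only (immaterial) difference is in the last step: the paper adjoins $\Box^{m}$ back to $(\Diamond^{-1})^{m}d$ for all $m$ to conclude $D \leq \neg s_{\nomi} \vee (a\wedge b)$, whereas you use the single instance $m=n_2$ together with $\Box^{m}x\wedge\Diamond^{m}y\leq\Diamond^{m}(x\wedge y)$ (the paper's Lemma \ref{Lemma:DiamondBox:leq:Diamond}) to get $e_1\wedge e_2\neq\bot$ directly.
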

\begin{proof}
Suppose that $s_{\nomi}^D \neq \bot$. We show that if $a, b \in At\mathbf{A}^{\delta}$ such that $a, b \leq s_{\nomi}^{D}$, then $a = b$. So let $a, b \in At \mathbf{A}^{\delta}$ such that $a, b \leq s_{\nomi}^{D}$. Then $a, b \leq D$, and so $a \leq (\Diamond^{-1})^{n_{1}}d$ and $b \leq (\Diamond^{-1})^{n_{2}}d$ for some natural numbers $n_1$ and $n_2$. We thus have that $d \leq \Diamond^{n_{1}}a$ and $d \leq \Diamond^{n_{2}}b$ by Lemma \ref{alD^-1sbiffblDsa: alDisblDsa}, and so, since $a, b \leq s_{\nomi}^D$ we find $d \leq \Diamond^{n_{1}}(a \wedge s_{\nomi}^d)$ and $d \leq \Diamond^{n_{2}}(b \wedge s_{\nomi}^D)$. Hence, $d \leq \Diamond^{n_{1}}(a \wedge s_{\nomi} \wedge D) \leq \Diamond^{n_{1}}(a \wedge s_{\nomi})$ and $d \leq \Diamond^{n_{2}}(b \wedge s_{\nomi} \wedge D) \leq \Diamond^{n_{2}}(b \wedge s_{\nomi})$. So, by axiom (Nom), $d \leq \Box^{m}(\neg s_{\nomi} \vee a)$ and $d \leq \Box^{m}(\neg s_{\nomi} \vee b)$ for all natural numbers $m$. Since $\Diamond^{-1}$ and $\Box$ are adjoint, $(\Diamond^{-1})^{m}d \leq \neg s_{\nomi} \vee a$ and $(\Diamond^{-1})^{m}d \leq \neg s_{\nomi} \vee b$, which means that, for all $m \in \mathbb{N}$, it holds that $(\Diamond^{-1})^{m}d \leq (\neg s_{\nomi} \vee a) \wedge (\neg s_{\nomi} \vee b)$. It thus follows that $D \leq (\neg s_{\nomi} \vee a) \wedge (\neg s_{\nomi} \vee b) = \neg s_{\nomi} \vee (a \wedge b)$. Now, if $a \neq b$, $D \leq \neg s_{\nomi}$, so $s_{\nomi} \leq \neg D$. Hence, $s_{\nomi} \wedge D \leq \bot$, i.e., $s_{\nomi}^D = \bot$, contradicting our assumption that $s_{\nomi}^D \neq \bot$.
\end{proof}

\subsection{Algebraic completeness of $\mathbf{H}^{+}\oplus\Sigma$}

As we will now show, the logic $\mathbf{H}^{+}\oplus\Sigma$ is complete with respect to permeated hybrid algebras which validate $\Sigma$. We first state and prove the main theorem, and consequently prove the lemmas needed for the main proof.

\begin{theorem} \label{Thm:PHA:Comp}
For any set $\Sigma$ of $\mathcal{H}$-formulas, the logic $\mathbf{H}^{+}\oplus\Sigma$ is sound and complete with respect to the class of all permeated hybrid algebras which validate $\Sigma$. That is to say, $\vdash_{\mathbf{H}^{+}\oplus\Sigma} \phi$ iff $\models_{\mathsf{PHA}(\Sigma)} \phi \approx \top$.
\end{theorem}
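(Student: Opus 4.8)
The plan is to follow the architecture of the proof of Theorem~\ref{Thm:HA:Comp}, upgrading the completeness direction so that the algebra it produces is a \emph{permeated} hybrid algebra, and, in the soundness direction, accounting for the two extra rules.

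Soundness is routine except for $(\mathit{Name})$ and $(\mathit{Paste})$, which are mirrored exactly by the two clauses defining permeation. If $\frak{A}\in\mathsf{PHA}$, $\nomi$ does not occur in $\phi$, and $\frak{A}\models\nomi\to\phi\approx\top$, then under any assignment the value $a$ of $\phi$ (which does not depend on the value given to $\nomi$) satisfies $x\leq a$ for every $x\in X_A$; were $a\neq\top$, the first permeation clause would furnish an atom $x\in X_A$ below $\neg a$, a contradiction, so $\frak{A}\models\phi\approx\top$: thus $(\mathit{Name})$, and a fortiori $(\mathit{NameLite})$, is sound. For $(\mathit{Paste})$ one argues by contraposition: if $\Diamond^{n}(\nomi\wedge\Diamond\phi)\to\psi$ fails under an assignment sending $\nomi$ to the atom $x$ and $\phi$ to $a$, then $x\leq\Diamond a$ (since $x$ is an atom), and the second permeation clause yields $y\in X_A$ with $y\leq a$ and $x\leq\Diamond y$; reassigning $\nomj$ to $y$ then refutes $\Diamond^{n}(\nomi\wedge\Diamond(\nomj\wedge\phi))\to\psi$, contradicting the hypothesis of the rule. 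Hence $\mathbf{H}^{+}\oplus\Sigma$ is sound for $\mathsf{PHA}(\Sigma)$.

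For completeness I argue the contrapositive, so assume $\nvdash_{\mathbf{H}^{+}\oplus\Sigma}\phi$. Since $\mathbf{H}^{+}\oplus\Sigma$ is closed under sorted substitution only finitely many nominals occur in $\phi$, so we may fix a nominal $\nomi_{0}$ not occurring in $\phi$; by $(\mathit{Name})$, $\nvdash\nomi_{0}\to\phi$, so in the orthodox Lindenbaum--Tarski algebra $\mathbf{A}$ of $\mathbf{H}^{+}\oplus\Sigma$ over $\mathsf{PROP}$ we have $s_{\nomi_{0}}\wedge[\neg\phi]>\bot$. Running the construction of Theorem~\ref{Thm:HA:Comp} in $\mathbf{A}^{\delta}$ from an atom $d\leq s_{\nomi_{0}}\wedge[\neg\phi]$ produces $D$, $\mathbf{A}_{D}$ and $\frak{A}_{D}=(\mathbf{A}_{D}^{-},X_{A_{D}})$; since $d\leq s_{\nomi_{0}}$ and $d\leq D$ we get $s^{D}_{\nomi_{0}}=d\in X_{A_{D}}$, so $X_{A_{D}}\neq\varnothing$, the ``Case~3'' difficulty of the earlier proof disappears, and Lemma~\ref{s^_nomj:atoms:or:bot} remains available. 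Just as before, $\frak{A}_{D}$ — or, when some $s^{D}_{\nomi}$ collapses to $\bot$, the product $\frak{A}_{D}\times\frak{A}_{D}$ as in Case~2 of the earlier proof, which stays permeated by Proposition~\ref{Prop:Perm:preserved:prod} — validates $\Sigma^{\approx}$ (using Proposition~\ref{Prop:Product:Pres} in the product case) and refutes $\phi$.

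The new work, and the main obstacle, is to show $\frak{A}_{D}$ permeated: for the first clause that every $\bot\neq c\in A_{D}$ lies above some designated atom, and for the second that $s^{D}_{\nomi}\leq\Diamond^{D}a$ yields $s^{D}_{\nomj}\leq a$ and $s^{D}_{\nomi}\leq\Diamond^{D}s^{D}_{\nomj}$ for some $\nomj$; each is proved by contraposition, turning a failure into a non-theorem of $\mathbf{H}^{+}\oplus\Sigma$ of the shape to which $(\mathit{Name})$, respectively $(\mathit{Paste})$, applies, producing a theorem incompatible with $d>\bot$. The obstacle is that the orthodox Lindenbaum--Tarski algebra interprets every nominal by a fixed constant, so there are no genuinely fresh nominals inside it and the standard Henkin-witness argument cannot be run verbatim; one circumvents this by using that $\mathsf{NOM}$ is countably infinite to reserve all nominals not occurring in $\phi$ as a supply of witnesses, and then building $d$ — equivalently the ultrafilter $d{\uparrow}$ of $\mathbf{A}^{\delta}$ — by a Lindenbaum-style construction that at each stage uses $(\mathit{Name})$ to keep a named core and $(\mathit{Paste})$ to adjoin a witnessing conjunct $\nomj\wedge\psi$ with $\nomj$ from the reserved supply, so that every ultrafilter lying in $D$ is named and its diamonds are witnessed. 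Once $d$ sits inside such a ``named and pasted'' ultrafilter the two permeation clauses for $\frak{A}_{D}$ follow, and the lemmas establishing the existence of this ultrafilter and the permeation of $\frak{A}_{D}$ would be stated and proved after the theorem, as in Section~\ref{subsec:HSigma:complete}.
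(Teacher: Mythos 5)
Your proposal is correct and follows essentially the same route as the paper: the paper likewise obtains a named and pasted maximal consistent set $\Gamma$ containing $\neg\phi$ (imported from ten Cate's Extended Lindenbaum Lemma, over a language enlarged by a fresh set $\mathsf{NOM}'$ rather than by reserving unused nominals of $\mathsf{NOM}$), takes $d$ to be an atom of $\mathbf{A}^{\delta}$ below $\bigwedge[\Gamma]$ so that $s^{D}_{\nomi_0}=d$ and the ``Case 3'' difficulty disappears, derives permeation of $\frak{A}_{D}$ from the named/pasted properties of $\Gamma$ via auxiliary lemmas, and handles collapsing nominal constants with the product $\frak{A}_{D}\times\frak{A}_{D}$ exactly as in your Case 2. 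Your explicit soundness verification for $(\mathit{Name})$ and $(\mathit{Paste})$ against the two permeation clauses is a correct addition that the paper leaves to the reader.
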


\begin{proof}
Suppose $\nvdash_{\mathbf{H}^{+}\oplus\Sigma} \phi$. We have to find a permeated hybrid algebra $\frak{A}$ and an assignment $v$ such that $\frak{A}, v \not\models \phi \approx \top$. However, as before, we will work with the orthodox interpretation of $\mathcal{H}$ for the purpose of the proof. Let $\mathsf{NOM}'$ be a denumerably infinite set of nominals disjoint from $\mathsf{NOM}$. We know from \cite{TenCate:Phd:Thesis} that $\neg \phi$ is contained in a $\mathbf{H}^{+}\oplus\Sigma$-maximal consistent set of formulas $\Gamma$ in the extended language such that
\begin{enumerate}[(i)]
\item $\Gamma$ contains at least one nominal, say $\nomi_0$, and
\item for each formula of the form $\Diamond^n(\nomi \wedge \Diamond \phi)$ in $\Gamma$, there is a nominal $\nomj$ for which the formula $\Diamond^n(\nomi \wedge \Diamond (\nomj \wedge \phi))$ is in $\Gamma$.
\end{enumerate}

Now, consider the orthodox Lindenbaum-Tarski algebra of $\mathbf{H}^{+}\oplus\Sigma$ over $\mathsf{PROP}$. For simplicity, denote this algebra by $\mathbf{A}$. In the usual way, we can show that $\mathbf{A} \models \mathbf{H}^{+}\oplus\Sigma^{\approx}$ and $\mathbf{A}, \nu \not\models \phi \approx \top$, where $\nu$ is the natural map taking $p$ to $[p]$. By standard propositional reasoning, the set $[\Gamma] = \{[\gamma] \mid \gamma \in \Gamma \}$ is an ultrafilter of $\mathbf{A}$. We can thus use the finite meet property to conclude that for every finite subset $\Gamma' \subseteq \Gamma$, it holds that $\bigwedge [\Gamma'] > \bot$ in $\mathbf{A}$.

Next we consider the orthodox canonical extension $\mathbf{A}^{\delta}$ of $\mathbf{A}$. Note that in $\mathbf{A}^{\delta}$ we have $\bigwedge [\Gamma] > \bot$. To see this, suppose $\bigwedge [\Gamma] \leq \bot$. But
by the compactness of the embedding of $\mathbf{A}$ into $\mathbf{A}^{\delta}$, there is a finite subset $\Gamma' \subseteq \Gamma$ such that $\bigwedge [\Gamma'] \leq \bot$ in $\mathbf{A}$, contradicting the claim above.

Since $\mathbf{A}^{\delta}$ is atomic, there is some atom $d$ in $\mathbf{A}^{\delta}$ such that $d \leq \bigwedge [\Gamma]$. Define, as in the proof of Theorem \ref{Thm:HA:Comp}, a sequence $d_0, d_1, d_2, \ldots$ of elements of $\mathbf{A}^{\delta}$, by setting $d_0 = d$ and $d_{n + 1} = \Diamond^{-1}d_n$. Set $D = \bigvee_{n \in \mathbb{N}}d_n$. Also define $\mathbf{A}_{D}$ as in the proof of Theorem \ref{Thm:HA:Comp}. As before it can be checked that $\mathbf{A}_D$, so defined, is an algebra. Furthermore, using the map $h$: $A \to A_D$ defined by $h(a) = a \wedge D$, we can show in the same way as in the proof of Lemma \ref{Lemma:h:homomorphism} that $\mathbf{A}_D$ is a homomorphic image of $\mathbf{A}$. Hence, $\mathbf{A}_D \models \mathbf{H}^{+}\oplus\Sigma^{\approx}$. To see that $\mathbf{A}_D \not\models \phi \approx \top$, consider the assignment $\nu_D(p)$: $\textsf{PROP} \to A_D$ defined as in the proof of Theorem \ref{Thm:HA:Comp}. Using the fact that $h$ is a homomorphism, we can show by structural induction that $\widetilde{\nu}_D(\psi) = h(\widetilde{\nu}(\psi))$ for all $\mathcal{H}$-formulas $\psi$. Now, since $\bigwedge [\Gamma] \leq \bigwedge [\Gamma']$ for every finite subset $\Gamma' \subseteq \Gamma$, we have $\nu(\bigwedge \Gamma') \geq d$. It thus holds that
$\widetilde{\nu}_D(\bigwedge \Gamma') = h(\widetilde{\nu}(\bigwedge \Gamma')) = \widetilde{\nu}(\bigwedge \Gamma') \wedge D \geq d > \bot$
for every finite subset $\Gamma'$ of $\Gamma$. But $\{\neg\phi\}$ is a finite subset of $\Gamma$, so $\widetilde{\nu}_D(\phi) \neq \top$.

In the same way as in the proof of Lemma \ref{s^_nomj:atoms:or:bot}, we can show that for each $\nomi \in \mathsf{NOM} \cup \mathsf{NOM}'$, $s^D_{\nomi}$ is either $\bot$ or an atom of $\mathbf{A}_D$. So let $\frak{A}_{D} = (\mathbf{A}^-_{D}, X_{A_D})$, where $\mathbf{A}^-_{D}$ is the reduct of $\mathbf{A}_{D}$ obtained by omitting the constant interpretations of nominals and $X_{A_D} = \{s_{\nomi}^D \mid s_{\nomi}^D > \bot \}$. We claim that $X_{A_D} \neq \varnothing$. To see this, recall that $\nomi_0 \in \Gamma$, so $d \leq \bigwedge[\Gamma] \leq [\nomi_0]$. It follows that $d \leq s^D_{\nomi_0}$, and so, at least $s^D_{\nomi_0} > \bot$. In particular, since $s^D_{\nomi_0}$ and $d$ are both atoms, $s^D_{\nomi_0} = d$. Now, from the fact that $\mathbf{H}^{+}\oplus\Sigma$ is closed under $(\mathit{Sorted}$ $\mathit{substitution})$, we have $\frak{A}_{D} \models \mathbf{H}^{+}\oplus\Sigma^{\approx}$. Furthermore, by Lemma \ref{Lemma:A_D:permeated}, $\frak{A}_D$ is permeated. From here we split our reasoning into two cases depending on whether the constant interpretations of all nominals in $\mathbf{A}_{D}$ are atoms or not:

\paragraph{Case 1:} $s_{\nomi}^D > \bot$ for all $\nomi \in \mathsf{NOM} \cup \mathsf{NOM}'$. In this case, consider the assignment $\nu'_D$ which extends $\nu_D$ from $\mathsf{PROP}$ to $\mathsf{PROP} \cup \mathsf{NOM} \cup \mathsf{NOM}'$, obtained by simply setting $\nu'_D(\nomi) = s_{\nomi}^D$ for each $\nomi \in \mathsf{NOM} \cup \mathsf{NOM}'$. It is clear that $\widetilde{\nu}'_D(\psi) = \widetilde{\nu}_D(\psi)$ for all $\mathcal{H}$-formulas $\psi$, and hence, $\widetilde{\nu}'_D(\phi) \neq \widetilde{\nu}'_D(\top)$. This means that $\frak{A}_{D}, \nu'_D \not\models \phi \approx \top$, as required.

\paragraph{Case 2:} $s_{\nomi}^D = \bot$ for some $\nomi \in \mathsf{NOM} \cup \mathsf{NOM}'$. Here we have that $(\mathfrak{A}_D)_0 \models \mathbf{H}^{+}\oplus\Sigma^{\approx}$, and hence, by Proposition \ref{Prop:Product:Pres}, $\frak{A}_D \times \frak{A}_D \models \mathbf{H}^{+}\oplus\Sigma^{\approx}$. By Proposition \ref{Prop:Perm:preserved:prod}, we know that $\frak{A}_D \times \frak{A}_D$ is also permeated. Now, consider the assignment $\nu''_D$ obtained by setting $\nu''_D(p) = (\nu_D(p), \nu_D(p))$ for all propositional variables $p \in \mathsf{PROP}$, and
\[
\nu''_D(\nomj) = \left\{ \begin{array}{ll}
(s_{\nomj}^{D}, \bot) & \textnormal{ if } s_{\nomj}^{D} > \bot \\
(\bot, s_{\nomi_0}^{D}) & \textnormal{ if } s_{\nomj}^{D} = \bot
\end{array} \right.
\]
for all nominals $\nomj \in \mathsf{NOM} \cup \mathsf{NOM}'$. For any $\mathcal{H}$-formula $\psi$, we have $\widetilde{\nu}''_D(\psi) = (\widetilde{\nu}_D(\psi), a_{\psi})$, where $a_{\psi}$ is some element of $\mathfrak{A}_D$. But then $\widetilde{\nu}''_D(\phi) = (\widetilde{\nu}_D(\phi), a_{\phi}) \neq (\widetilde{\nu}_D(\top), D) = \widetilde{\nu}''_D(\top)$.
\end{proof}

Let us now prove the lemmas used in the proof of Theorem \ref{Thm:PHA:Comp}. In what follows, unless stated otherwise, $\Gamma$, $\mathbf{A}$, $\mathbf{A}^{\delta}$, $\mathbf{A}_D$, $\nu$, $\nu_D$ and $\frak{A}_D$ will be as in the proof of Theorem \ref{Thm:PHA:Comp}.


\begin{lemma}\label{inverse>bot:dleqdiamomd}
Let $\mathbf{A}$ be a BAO, $\mathbf{A}^{\delta}$ its canonical extension, and $a, b\in \mathbf{A}^{\delta}$. Then we have $a \wedge (\Diamond^{-1})^nb > \bot$ iff $b \leq \Diamond^na$.
\end{lemma}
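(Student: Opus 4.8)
The plan is to prove the two implications by contraposition, closely mirroring the proof of Lemma~\ref{alD^-1sbiffblDsa: alDisblDsa} but now allowing $a$ to be arbitrary. The only facts about $\mathbf{A}^{\delta}$ that I would use are that it is perfect --- so $\Diamond$ and $\Diamond^{-1}$ are completely additive and we have the adjunctions $\Diamond \dashv \Box^{-1}$ and $\Diamond^{-1} \dashv \Box$ --- together with the De~Morgan identities $\Box c = \neg\Diamond\neg c$ and $\Box^{-1}c = \neg\Diamond^{-1}\neg c$. Iterating the latter gives the identity $(\Box^{-1})^{n}\neg c = \neg(\Diamond^{-1})^{n}c$, which is the computational heart of the argument. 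I would also use that $b$ is an atom (this is how the lemma gets applied in Section~\ref{sec:algebraic:comp}, with $b$ playing the role of the designated atom $d$), so that $b \not\leq c$ is equivalent to $b \leq \neg c$ and $b \neq \bot$.

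For the forward direction, I would suppose $b \not\leq \Diamond^{n}a$, so $b \leq \neg\Diamond^{n}a$ and hence $\Diamond^{n}a \leq \neg b$. Applying the adjunction $\Diamond \dashv \Box^{-1}$ exactly $n$ times --- formally, by induction on $n$, peeling off one $\Diamond$ and introducing one $\Box^{-1}$ at each step --- yields $a \leq (\Box^{-1})^{n}\neg b$, and by the key identity $(\Box^{-1})^{n}\neg b = \neg(\Diamond^{-1})^{n}b$. Thus $a \leq \neg(\Diamond^{-1})^{n}b$, i.e.\ $a \wedge (\Diamond^{-1})^{n}b = \bot$, the negation of the left-hand side. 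Conversely, I would suppose $a \wedge (\Diamond^{-1})^{n}b = \bot$, i.e.\ $a \leq \neg(\Diamond^{-1})^{n}b = (\Box^{-1})^{n}\neg b$; running the same $n$-fold adjunction in the other direction gives $\Diamond^{n}a \leq \neg b$. Were $b \leq \Diamond^{n}a$ to hold, we would then get $b \leq \neg b$, forcing $b = \bot$ and contradicting that $b$ is an atom; hence $b \not\leq \Diamond^{n}a$.

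I do not expect a real obstacle here; the only points that deserve care are the $n$-fold use of the adjunction (cleanest as an explicit induction on $n$, with the base case $n=0$ trivial) and the identity $(\Box^{-1})^{n}\neg c = \neg(\Diamond^{-1})^{n}c$, which is what lets one trade the iterated $\Box^{-1}$ produced by adjunction for an iterated $\Diamond^{-1}$ under a negation. An equivalent but slightly longer route avoids this bookkeeping: use atomicity to write $a = \bigvee\{x \in \At\mathbf{A}^{\delta} \mid x \leq a\}$, push this join through the completely additive operations $\Diamond^{n}$ and $(\Diamond^{-1})^{n}$, and reduce both sides to statements quantifying over atoms $x \leq a$, to which Lemma~\ref{alD^-1sbiffblDsa: alDisblDsa} applies directly (using that an atom of $\mathbf{A}^{\delta}$ is completely join-prime in the Boolean algebra).
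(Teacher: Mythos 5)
Your proof is correct, and it diverges from the paper's in one of the two directions, so a comparison is worthwhile. The right-to-left direction is essentially the paper's argument in mirror image: the paper derives $(\Diamond^{-1})^{n}b \leq \neg a$ and applies the adjunction $\Diamond^{-1} \dashv \Box$ to get $b \leq \Box^{n}\neg a = \neg\Diamond^{n}a$, whereas you keep $a$ on the left and use $\Diamond \dashv \Box^{-1}$ together with the identity $(\Box^{-1})^{n}\neg b = \neg(\Diamond^{-1})^{n}b$; both need $b \neq \bot$ at the last step. For the left-to-right direction the paper does \emph{not} argue by contraposition: it uses atomicity of $\mathbf{A}^{\delta}$ to pick an atom $c \leq a \wedge (\Diamond^{-1})^{n}b$, applies Lemma \ref{alD^-1sbiffblDsa: alDisblDsa} to the pair of atoms $c$, $b$ to get $b \leq \Diamond^{n}c$, and finishes by monotonicity of $\Diamond^{n}$. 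Your contrapositive adjunction argument is self-contained and avoids re-invoking the earlier lemma, at the cost of the $(\Box^{-1})^{n}\neg c = \neg(\Diamond^{-1})^{n}c$ bookkeeping; the alternative you sketch at the end, via joins of atoms below $a$, is in fact the closer match to the paper's route. Finally, you are right to add the hypothesis that $b$ is an atom: as literally stated the lemma fails already at $n=0$ (take $a$, $b$ incomparable with $a \wedge b > \bot$), the paper's own proof tacitly assumes it --- both when it cites Lemma \ref{alD^-1sbiffblDsa: alDisblDsa}, which requires both arguments to be atoms, and when it passes from $b \leq \neg\Diamond^{n}a$ to $b \nleq \Diamond^{n}a$, which requires $b \neq \bot$ --- and in every application in Section \ref{sec:algebraic:comp} the element $b$ is instantiated to the designated atom $d$. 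Identifying and repairing that omission is a point in your proposal's favour rather than a deviation to be penalized.
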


\begin{proof}
For the left-to-right implication, assume $a \wedge (\Diamond^{-1})^nb > \bot$. Then there is a $c \in At\mathbf{A}^{\delta}$ such that $c \leq a \wedge (\Diamond^{-1})^nb$, and so $c \leq a$ and $c \leq (\Diamond^{-1})^nb$. Hence, by Lemma \ref{alD^-1sbiffblDsa: alDisblDsa}, $b \leq \Diamond^{n}c$, which means that $b \leq \Diamond^n a$.

For the converse, suppose $a \wedge (\Diamond^{-1})^nb = \bot$. Then $(\Diamond^{-1})^nb \leq \neg a$, and so, since $\Diamond^{-1}$ and $\Box$ are adjoint, $b \leq \Box^n\neg a = \neg\Diamond^n a$. Hence, $b \nleq \Diamond^n a$.
\end{proof}

\begin{lemma} \label{Lemma:DiamondBox:leq:Diamond}
Let $\mathbf{A}$ be a BAO and $a, b$ elements of $\mathbf{A}$. Then $\Box^ma \wedge \Diamond^m \leq \Diamond^m(a \wedge b)$.
\end{lemma}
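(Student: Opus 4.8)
The plan is to prove the inequality $\Box^{m}a \wedge \Diamond^{m}b \leq \Diamond^{m}(a \wedge b)$ by induction on $m$, the real content being the case $m = 1$, which is the familiar modal principle $\Box a \wedge \Diamond b \leq \Diamond(a \wedge b)$. All of this lives purely at the level of Boolean algebras with a single operator, so we only need normality and additivity of $\Diamond$, together with the monotonicity of operators already recorded in the preliminaries, and the convention $\Box c = \neg\Diamond\neg c$.

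First I would handle the base case. For $m = 0$ there is nothing to prove, since the claim is $a \wedge b \leq a \wedge b$. For $m = 1$, I would start from the Boolean identity $b = (a \wedge b) \vee (\neg a \wedge b)$ and apply additivity of $\Diamond$ to get $\Diamond b = \Diamond(a \wedge b) \vee \Diamond(\neg a \wedge b)$; monotonicity of $\Diamond$ then yields $\Diamond b \leq \Diamond(a \wedge b) \vee \Diamond\neg a$. Meeting both sides with $\Box a = \neg\Diamond\neg a$ and invoking the Boolean law $(x \vee y) \wedge \neg y \leq x$ (with $x = \Diamond(a \wedge b)$ and $y = \Diamond\neg a$), I obtain $\Box a \wedge \Diamond b \leq \Diamond(a \wedge b)$.

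For the inductive step, suppose the inequality holds for $m$. Then
\[
\Box^{m+1}a \wedge \Diamond^{m+1}b \;=\; \Box(\Box^{m}a) \wedge \Diamond(\Diamond^{m}b) \;\leq\; \Diamond(\Box^{m}a \wedge \Diamond^{m}b),
\]
where the inequality is the case $m = 1$ applied to the elements $\Box^{m}a$ and $\Diamond^{m}b$. Applying the induction hypothesis inside $\Diamond$ and using monotonicity once more gives $\Diamond(\Box^{m}a \wedge \Diamond^{m}b) \leq \Diamond\bigl(\Diamond^{m}(a \wedge b)\bigr) = \Diamond^{m+1}(a \wedge b)$, which closes the induction.

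I do not expect any genuine obstacle here; the argument is routine BAO manipulation. The only place calling for a little care is the $m = 1$ base case, namely the bookkeeping around $\Box$ being the defined operation $\neg\Diamond\neg(\cdot)$ and the use of the Boolean step $(x \vee y) \wedge \neg y \leq x$; everything else follows by monotonicity and the additivity of $\Diamond$.
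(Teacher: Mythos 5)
Your proof is correct. It does, however, take a slightly different route from the paper's. The paper proves the inequality in one step, without induction: it invokes the $m$-fold iterated $K$-principle $\Box^m(\neg a \vee \neg b) \leq \neg\Box^m a \vee \Box^m\neg b$ (actually asserted there as an equality, which is too strong in general --- only the $\leq$ direction holds, and only that direction is used) and then contraposes, using De Morgan and the definition $\Diamond^m c = \neg\Box^m\neg c$ to land on $\Box^m a \wedge \Diamond^m b \leq \Diamond^m(a \wedge b)$. You instead reduce everything to the single-modality case $\Box a \wedge \Diamond b \leq \Diamond(a \wedge b)$, which you derive directly from additivity of $\Diamond$ via the splitting $b = (a \wedge b) \vee (\neg a \wedge b)$, and then iterate by induction on $m$ using monotonicity. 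Your version is the more self-contained of the two: the paper's iterated $K$-law would itself require an induction (or an appeal to the logic's theorems being validated by the algebra) if one wanted to justify it from the BAO axioms alone, whereas your argument uses nothing beyond normality, additivity, monotonicity and Boolean reasoning. The trade-off is that the paper's computation is shorter on the page once the iterated $K$-inequality is granted. One small side note: the statement as printed omits the $b$ in $\Diamond^m b$; you read it correctly.
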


\begin{proof}
We know that $\Box^m(\neg a \vee \neg b) = \neg\Box^m a \vee \Box^m\neg b$, so $\neg (\neg\Box^m a \vee \Box^m\neg b) \leq \neg\Box^m(\neg a \vee \neg b)$. But $\neg (\neg\Box^m a \vee \Box^m\neg b) = \Box^ma \wedge \neg\Box^m \neg b = \Box^ma \wedge \Diamond^m b$, which means that $\Box^ma \wedge \Diamond^m b \leq \neg\Box^m(\neg a \vee \neg b)$. Hence, since $\neg\Box^m(\neg a \vee \neg b) = \Diamond^m(a \wedge b)$, we find $\Box^ma \wedge \Diamond^m b \leq \Diamond^m(a \wedge b)$.
\end{proof}

\begin{lemma}\label{dleqdiamond:dleqdiamondd}
For any element $a$ in $\mathbf{A}_{D}$, if $d \leq \Diamond^na$, then $d \leq (\Diamond^D)^na$.
\end{lemma}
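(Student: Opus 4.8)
The plan is to deduce the claim from the single inequality
$(\Diamond^D)^n a \geq \Diamond^n a \wedge D$, valid for every $a \in A_D$ and every $n \in \mathbb{N}$. Granting this, the lemma is immediate: recalling that $d = d_0$ and $D = \bigvee_{k \in \mathbb{N}} d_k$, we have $d \leq D$, so if moreover $d \leq \Diamond^n a$, then $d \leq \Diamond^n a \wedge D \leq (\Diamond^D)^n a$.

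So the only real work is to establish $(\Diamond^D)^n a \geq \Diamond^n a \wedge D$ by induction on $n$. For $n = 0$ this is trivial (indeed $a = a \wedge D$ for $a \in A_D$). For the inductive step, first note that by Lemma \ref{Lemma:A_D:closed} the element $(\Diamond^D)^n a$ genuinely lies in $A_D$, so the operation $\Diamond^D$ applies to it and, by the computation of $\Diamond^D$ in the proof of that lemma, $(\Diamond^D)^{n+1} a = \Diamond\bigl((\Diamond^D)^n a\bigr) \wedge D$. Using the induction hypothesis together with the monotonicity of $\Diamond$ and of $\wedge$, this is $\geq \Diamond(\Diamond^n a \wedge D) \wedge D$. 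Now apply Lemma \ref{Lemma:DiamondBox:leq:Diamond} (with $m = 1$, and with the roles of its two elements played by $D$ and $\Diamond^n a$) to get $\Box D \wedge \Diamond^{n+1} a \leq \Diamond(\Diamond^n a \wedge D)$, and then Lemma \ref{Lemma:DleqBoxD}, i.e. $D \leq \Box D$, to conclude $\Diamond(\Diamond^n a \wedge D) \geq D \wedge \Diamond^{n+1} a$. Intersecting once more with $D$ yields $(\Diamond^D)^{n+1} a \geq \Diamond^{n+1} a \wedge D$, which closes the induction.

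I do not expect any genuine obstacle here, since all the ingredients have already been prepared; the two points that need a little care are keeping track of the fact that $(\Diamond^D)^n a \in A_D$ (so that the next application of $\Diamond^D$ is the operation of $\mathbf{A}_D$ and unfolds as above) and the interplay between $D$, $\Box D$ and $\Diamond$ captured by Lemmas \ref{Lemma:DleqBoxD} and \ref{Lemma:DiamondBox:leq:Diamond}. In fact the same argument, with the reverse inequality supplied directly by monotonicity of $\Diamond$, proves the full equality $(\Diamond^D)^n a = \Diamond^n a \wedge D$, but only the displayed half is needed for the lemma.
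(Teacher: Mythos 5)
Your proof is correct. It reaches the same conclusion by a mildly but genuinely different route: the paper inducts directly on the statement about the atom $d$, peeling a $\Diamond$ off the \emph{inside} of $\Diamond^{k}\Diamond a$ and converting it into $\Diamond^{k}\Diamond^{D}a$, which requires the iterated inequality $d \leq \Box^{k}D$ together with the $m=k$ instance of Lemma \ref{Lemma:DiamondBox:leq:Diamond}, before invoking the inductive hypothesis on $\Diamond^{D}a$. You instead establish the $d$-free inequality $(\Diamond^D)^n a \geq \Diamond^n a \wedge D$ (indeed the equality $(\Diamond^D)^n a = \Diamond^n a \wedge D$) for all $a \in A_D$, peeling the \emph{outermost} $\Diamond^D$, so that only the single-step facts $D \leq \Box D$ and the $m=1$ case of Lemma \ref{Lemma:DiamondBox:leq:Diamond} are needed. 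Your version is slightly more informative: it isolates the conceptual content underlying both arguments, namely that $(\Diamond^D)^n$ is exactly the relativization of $\Diamond^n$ to $D$, and the lemma then follows for any element below $D$, not only for $d$. One cosmetic remark: the unfolding $(\Diamond^D)^{n+1}a = \Diamond\bigl((\Diamond^D)^n a\bigr) \wedge D$ is just the displayed definition of $\Diamond^D$ applied to $(\Diamond^D)^n a$; Lemma \ref{Lemma:A_D:closed} is only needed to know that this element lies in $A_D$ so that the iteration stays inside $\mathbf{A}_D$, which you do note. Both proofs rest on the same two prepared lemmas and are of essentially equal length, so either could stand in the paper.
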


\begin{proof}
The proof is by induction on $n$. For $n = 1$, assume $d \leq \Diamond a$. Then $d \wedge D \leq \Diamond a \wedge D$, and so, since $d \leq D$, we have $d \leq \Diamond^D a$. Now, suppose that for all $a$ in $\mathbf{A}_D$, the claim holds for $n = k$. For $n = k + 1$, assume $d \leq \Diamond^{k + 1}a$. Then $d \leq \Diamond^k\Diamond a$. But by Lemma \ref{Lemma:DleqBoxD}, $d \leq D \leq \Box^kD$, so using Lemma \ref{Lemma:DiamondBox:leq:Diamond} we find that $d \leq \Diamond^{k}\Diamond a \wedge \Box^kD \leq \Diamond^k(\Diamond a \wedge D) = \Diamond^k\Diamond^Da$. Now, since $\Diamond^D a \in A_D$, we can use the inductive hypothesis to get $d \leq (\Diamond^D)^{k}\Diamond^{D} a = (\Diamond^D)^{k+1} a$.
%
\end{proof}

\begin{lemma} \label{Lemma:gammainGamma:dleq}
For all $\mathcal{H}$-formulas $\gamma$, it holds that $\gamma \in \Gamma$ iff $d \leq \widetilde{\nu}(\gamma)$ iff $d \leq \widetilde{\nu}_D(\gamma)$.
\end{lemma}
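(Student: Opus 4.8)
The plan is to reduce the statement to facts already in hand from the proof of Theorem~\ref{Thm:PHA:Comp}, namely: (a) that $\widetilde{\nu}(\gamma) = [\gamma]$ for every $\mathcal{H}$-formula $\gamma$, where $[\gamma]$ denotes its equivalence class in the orthodox Lindenbaum--Tarski algebra $\mathbf{A}$ (with the constant interpretations given by $s_{\nomi} = [\nomi]$); (b) the structural-induction identity $\widetilde{\nu}_D(\gamma) = h(\widetilde{\nu}(\gamma)) = \widetilde{\nu}(\gamma) \wedge D$, noted in the proof of Theorem~\ref{Thm:PHA:Comp} and resting on Lemma~\ref{Lemma:h:homomorphism}; and (c) that $[\Gamma]$ is an ultrafilter of $\mathbf{A}$ with $d$ an atom of $\mathbf{A}^{\delta}$ satisfying $d \leq \bigwedge [\Gamma]$, together with the observation that $d \leq D$ since $d = d_0$ is one of the joinands of $D = \bigvee_{n \in \mathbb{N}} d_n$.

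First I would establish the left-hand biconditional $\gamma \in \Gamma$ iff $d \leq \widetilde{\nu}(\gamma)$. For the forward direction, if $\gamma \in \Gamma$ then $[\gamma] \in [\Gamma]$, so $\bigwedge [\Gamma] \leq [\gamma]$, whence $d \leq \bigwedge [\Gamma] \leq [\gamma] = \widetilde{\nu}(\gamma)$ by (a) and (c). For the converse I would argue contrapositively: if $\gamma \notin \Gamma$ then, by maximal consistency of $\Gamma$, we have $\neg\gamma \in \Gamma$, so by the forward direction $d \leq \widetilde{\nu}(\neg\gamma) = \neg\widetilde{\nu}(\gamma)$; were it also the case that $d \leq \widetilde{\nu}(\gamma)$, we would get $d \leq \widetilde{\nu}(\gamma) \wedge \neg\widetilde{\nu}(\gamma) = \bot$, contradicting that $d$ is an atom.

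Next I would handle the right-hand biconditional $d \leq \widetilde{\nu}(\gamma)$ iff $d \leq \widetilde{\nu}_D(\gamma)$, using (b). If $d \leq \widetilde{\nu}(\gamma)$, then since $d \leq D$ we obtain $d \leq \widetilde{\nu}(\gamma) \wedge D = \widetilde{\nu}_D(\gamma)$; conversely $\widetilde{\nu}_D(\gamma) = \widetilde{\nu}(\gamma) \wedge D \leq \widetilde{\nu}(\gamma)$, so $d \leq \widetilde{\nu}_D(\gamma)$ forces $d \leq \widetilde{\nu}(\gamma)$. Chaining the two biconditionals yields the lemma.

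I do not anticipate a genuine obstacle: the lemma is essentially a repackaging of the Lindenbaum--Tarski and canonical-extension bookkeeping already carried out for Theorem~\ref{Thm:PHA:Comp}. The only points calling for a little care are checking that $\widetilde{\nu}$ and $h$ interact correctly with the constant interpretations of nominals, so that (a) and (b) hold for formulas containing nominals and not merely for purely propositional ones, and recalling that $d \leq D$; both are immediate from the constructions, and if one wished to be fully explicit one could spell out the one-line inductions establishing (a) and (b), but since these duplicate inductions performed earlier I would simply cite them.
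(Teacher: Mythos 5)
Your proposal is correct and follows essentially the same route as the paper's own proof: the forward direction via $d \leq \bigwedge[\Gamma] \leq [\gamma]$, the converse by maximal consistency and the left-to-right direction applied to $\neg\gamma$, and the second biconditional by meeting with $D$ using $d \leq D$ and the identity $\widetilde{\nu}_D(\gamma) = \widetilde{\nu}(\gamma) \wedge D$. No gaps; the points you flag for care (the inductions for $\widetilde{\nu}(\gamma)=[\gamma]$ and $\widetilde{\nu}_D = h \circ \widetilde{\nu}$) are indeed already established in the proof of Theorem \ref{Thm:PHA:Comp}, exactly as you cite them.
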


\begin{proof}
Suppose $\gamma \in \Gamma$. Then $[\gamma] \in [\Gamma]$, and so $\bigwedge[\Gamma] \leq [\gamma]$. But $d \leq \bigwedge[\Gamma]$, so $d \leq [\gamma] = \widetilde{\nu}(\gamma)$. Conversely, assume $\gamma \notin \Gamma$. Since $\Gamma$ is a maximal consistent set of formulas, we have $\neg\gamma \in \Gamma$. Hence, using the left-to-right direction, $d \leq \widetilde{\nu}(\neg\gamma) = \neg\widetilde{\nu}(\gamma)$, and so $d \nleq \widetilde{\nu}(\gamma)$. Now, assume $d \leq \widetilde{\nu}(\gamma)$. Then $d \wedge D \leq \widetilde{\nu}(\gamma) \wedge D$, and so $d \leq \widetilde{\nu}_{D}(\gamma)$. Conversely, assume $d \leq \widetilde{\nu}_{D}(\gamma)$. Then $d \leq \widetilde{\nu}(\gamma) \wedge D$, which gives $d \leq \widetilde{\nu}(\gamma)$.
\end{proof}

\begin{lemma} \label{Lemma:leastm}
Let $a$ be any element in $A_D$ and $n$ a natural number such that $a \wedge (\Diamond^{-1})^{n}d > \bot$. Then there is an $s^D_{\nomj}$ such that $a \wedge s^D_{\nomj} \wedge (\Diamond^{-1})^{n}d > \bot$.
\end{lemma}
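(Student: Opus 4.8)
The plan is to push the two ``$>\bot$'' statements through Lemmas \ref{inverse>bot:dleqdiamomd} and \ref{Lemma:gammainGamma:dleq}, turning them into membership statements about the maximal consistent set $\Gamma$, and then to prove the resulting purely syntactic assertion by induction on $n$, exploiting the two properties of $\Gamma$ coming from the rules $(\mathit{Name})$ and $(\mathit{Paste})$: namely that $\Gamma$ contains a nominal $\nomi_{0}$ and that every formula $\Diamond^{n}(\nomi\wedge\Diamond\phi)\in\Gamma$ is witnessed by some $\Diamond^{n}(\nomi\wedge\Diamond(\nomj\wedge\phi))\in\Gamma$.

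Concretely, I would first write $a=[\chi]\wedge D$ for a suitable $\mathcal{H}$-formula $\chi$ (legitimate, since $A_{D}=\{a'\wedge D\mid a'\in A\}$ and $\mathbf{A}$ is a Lindenbaum--Tarski algebra). Since $(\Diamond^{-1})^{n}d=d_{n}\leq D$, any meet already containing $(\Diamond^{-1})^{n}d$ absorbs the factor $D$; hence $a\wedge(\Diamond^{-1})^{n}d=[\chi]\wedge(\Diamond^{-1})^{n}d$ and $a\wedge s^{D}_{\nomj}\wedge(\Diamond^{-1})^{n}d=[\chi\wedge\nomj]\wedge(\Diamond^{-1})^{n}d$ for any nominal $\nomj$. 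By Lemma \ref{inverse>bot:dleqdiamomd} the hypothesis $a\wedge(\Diamond^{-1})^{n}d>\bot$ becomes $d\leq\Diamond^{n}[\chi]=[\Diamond^{n}\chi]$, i.e.\ (Lemma \ref{Lemma:gammainGamma:dleq}) $\Diamond^{n}\chi\in\Gamma$; and, symmetrically, the conclusion $a\wedge s^{D}_{\nomj}\wedge(\Diamond^{-1})^{n}d>\bot$ is equivalent to $\Diamond^{n}(\nomj\wedge\chi)\in\Gamma$. So it suffices to prove: \emph{for every $\mathcal{H}$-formula $\chi$ and every $n\in\mathbb{N}$, if $\Diamond^{n}\chi\in\Gamma$ then $\Diamond^{n}(\nomj\wedge\chi)\in\Gamma$ for some nominal $\nomj$.}

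I would establish this by induction on $n$. For $n=0$: $\Gamma$ contains a nominal $\nomi_{0}$ by property (i), and since $\Gamma$ is deductively closed, $\chi\in\Gamma$ yields $\nomi_{0}\wedge\chi\in\Gamma$. For the step, assume $\Diamond^{n+1}\chi=\Diamond^{n}(\Diamond\chi)\in\Gamma$; applying the induction hypothesis to the formula $\Diamond\chi$ gives a nominal $\nomk$ with $\Diamond^{n}(\nomk\wedge\Diamond\chi)\in\Gamma$; this has precisely the form required by the witnessing property (ii) (with nominal $\nomk$ and $\phi=\chi$), so there is a nominal $\nomj$ with $\Diamond^{n}(\nomk\wedge\Diamond(\nomj\wedge\chi))\in\Gamma$; finally, monotonicity of $\Diamond$ gives $\vdash_{\mathbf{H}^{+}\oplus\Sigma}\Diamond^{n}(\nomk\wedge\Diamond(\nomj\wedge\chi))\to\Diamond^{n+1}(\nomj\wedge\chi)$, so $\Diamond^{n+1}(\nomj\wedge\chi)\in\Gamma$, as desired.

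The bookkeeping translating the ``$>\bot$'' conditions (via Lemmas \ref{inverse>bot:dleqdiamomd}, \ref{Lemma:gammainGamma:dleq}, and the observation $d_{n}\leq D$) is routine; the one delicate point is the induction step, where one cannot apply property (ii) directly to $\Diamond^{n+1}\chi$ and must first use the induction hypothesis to expose an occurrence of $\Diamond\chi$ guarded by a nominal $\nomk$, then let (ii) insert a nominal one modal step deeper, and finally drop $\nomk$ by monotonicity. This is the algebraic shadow of the usual Henkin-style ``pasting'' argument for hybrid logic.
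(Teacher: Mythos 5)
Your proof is correct, and its engine is the same as the paper's: induction on $n$, with the base case supplied by the nominal $\nomi_0 \in \Gamma$ (equivalently, $d = s^{D}_{\nomi_0}$) and the inductive step supplied by the witnessing property (ii) of $\Gamma$, applied after peeling off one $\Diamond$ and using the induction hypothesis to expose a nominal guard. Where you differ is in the bookkeeping: you perform the algebra-to-syntax translation once, up front, reducing the lemma to the purely syntactic claim that $\Diamond^{n}\chi\in\Gamma$ implies $\Diamond^{n}(\nomj\wedge\chi)\in\Gamma$ for some $\nomj$, and then induct on that. The paper instead runs the induction inside $\mathbf{A}_D$ and shuttles between $\mathbf{A}_D$, $\mathbf{A}^{\delta}$ and $\Gamma$ at every step, which forces it to invoke Lemma \ref{dleqdiamond:dleqdiamondd} (to convert $d\leq\Diamond^{k}(\cdot)$ into $d\leq(\Diamond^{D})^{k}(\cdot)$ so that the element can be recognized as $\widetilde{\nu}_{D}$ of a formula) and to appeal to the surjectivity of $\nu$ and $h$ inside the inductive step. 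Your version avoids both, at the cost of the small observations that $(\Diamond^{-1})^{n}d\leq D$ lets one absorb the factor $D$ and that $\Diamond^{n}[\chi]=[\Diamond^{n}\chi]$ holds in $\mathbf{A}^{\delta}$; both are legitimate, and your final monotonicity step ($\vdash\Diamond^{n}(\nomk\wedge\Diamond(\nomj\wedge\chi))\to\Diamond^{n+1}(\nomj\wedge\chi)$ together with deductive closure of $\Gamma$) is exactly what the paper achieves by weakening $s^{D}_{\nomj}\wedge\Diamond(s^{D}_{\nomk}\wedge a)$ to $\Diamond(s^{D}_{\nomk}\wedge a)$ before reapplying Lemma \ref{inverse>bot:dleqdiamomd}. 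Net effect: the same argument, with a cleaner factorization into one translation lemma plus one syntactic induction.
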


\begin{proof}
Let $a$ be any element of $A_D$ such that $a \wedge (\Diamond^{-1})^{n}d > \bot$. The proof is by induction on $n$. For $n = 0$, we have $a \wedge d > \bot$. But $d = s^D_{\nomi_{0}}$, so we are done. Now, suppose that for every $a \in A_D$, the claim holds for all $n = k$. For $n = k + 1$, assume that $a \wedge (\Diamond^{-1})^{k + 1}d > \bot$. Then $d \leq \Diamond^{k + 1}a = \Diamond^k\Diamond a$ by Lemma \ref{inverse>bot:dleqdiamomd}, and therefore, another application of Lemma \ref{inverse>bot:dleqdiamomd}, but in the opposite direction, yields $\Diamond a \wedge (\Diamond^{-1})^{k}d > \bot$. Hence, since $(\Diamond^{-1})^kd \leq D$, we have $\Diamond a \wedge ((\Diamond^{-1})^{k}d \wedge D) = (\Diamond a \wedge D) \wedge (\Diamond^{-1})^{k}d = \Diamond^{D} a \wedge (\Diamond^{-1})^{k}d > \bot$. Since $\Diamond^Da$ is an element of $\mathbf{A}_D$, we can use the inductive hypothesis to conclude that there is a nominal $\nomj \in \mathsf{NOM} \cup \mathsf{NOM}'$ such that $\Diamond^{D} a \wedge s^{D}_{\nomj} \wedge (\Diamond^{-1})^{k}d > \bot$. But then $d \leq \Diamond^{k}(\Diamond^D a \wedge s^{D}_{\nomj})$ by Lemma \ref{inverse>bot:dleqdiamomd}. Therefore, by Lemma \ref{dleqdiamond:dleqdiamondd}, $d \leq (\Diamond^{D})^{k}(\Diamond^D a \wedge s^{D}_{\nomj})$. Now, since $\nu$ and $h$ are surjective, there is some $\psi$ such that $d \leq (\Diamond^{D})^{k}(\Diamond^{D}\widetilde{\nu}_{D}(\psi) \wedge s^{D}_{\nomj}) = \widetilde{\nu}_{D}(\Diamond^{k}(\Diamond\psi \wedge \nomj))$.
We thus have that $\Diamond^{k}(\nomj \wedge \Diamond\psi) \in \Gamma$ by Lemma \ref{Lemma:gammainGamma:dleq}, which means there is a nominal $\nomk$ in $\mathsf{NOM} \cup \mathsf{NOM}'$ such that $\Diamond^{k}(\nomj \wedge \Diamond(\nomk \wedge \psi)) \in \Gamma$. Hence, by Lemma \ref{Lemma:gammainGamma:dleq}, we get $d \leq \widetilde{\nu}_{D}(\Diamond^{k}(\nomj \wedge \Diamond(\nomk \wedge \psi))) = (\Diamond^{D})^{k}(s^{D}_{\nomj} \wedge \Diamond^D(s^{D}_{\nomk} \wedge a)) = \Diamond^{k}(s^{D}_{\nomj} \wedge \Diamond(s^{D}_{\nomk} \wedge a)) \wedge D \leq \Diamond^{k}(s^{D}_{\nomj} \wedge \Diamond(s^{D}_{\nomk} \wedge a))$. So, by Lemma \ref{inverse>bot:dleqdiamomd}, $\bot < (\Diamond^{-1})^{k}d \wedge s^{D}_{\nomj} \wedge \Diamond(s^{D}_{\nomk} \wedge a) \leq (\Diamond^{-1})^{k}d \wedge \Diamond(s^{D}_{\nomk} \wedge a)$. But then $d \leq \Diamond^{k}\Diamond(s^{D}_{\nomk} \wedge a) = \Diamond^{k + 1}(s^{D}_{\nomk} \wedge a)$, and therefore, $(\Diamond^{-1})^{k + 1}d \wedge s^{D}_{\nomk} \wedge a > \bot$.
\end{proof}

Finally, we are ready to show that $\frak{A}_D$ is permeated.

\begin{lemma} \label{Lemma:A_D:permeated}
$\frak{A}_{D}$ is permeated.
\end{lemma}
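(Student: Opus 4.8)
The plan is to verify the two defining conditions of a permeated hybrid algebra for $\frak{A}_D = (\mathbf{A}^-_D, X_{A_D})$, where $X_{A_D} = \{s^D_\nomi \mid s^D_\nomi > \bot\}$, drawing on the lemmas established above — especially Lemmas \ref{Lemma:gammainGamma:dleq}, \ref{dleqdiamond:dleqdiamondd}, and \ref{Lemma:leastm} — and the key structural fact that $d = s^D_{\nomi_0}$ is the unique atom below $D$ reachable by finitely many applications of $\Diamond^{-1}$ from itself; indeed every atom $c$ of $\mathbf{A}^\delta$ below an element $a \in A_D$ satisfies $c \leq (\Diamond^{-1})^n d$ for some $n$, since $a \leq D = \bigvee_n d_n$ and $c$ is an atom.

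First I would handle the \emph{first} permeation condition: given $\bot \neq a \in A_D$, I must find some $s^D_\nomj \in X_{A_D}$ with $s^D_\nomj \leq a$. Since $a > \bot$ and $\mathbf{A}^\delta$ is atomic, pick an atom $c \leq a$; then $c \leq D$, so $c \leq (\Diamond^{-1})^n d$ for some $n \in \mathbb{N}$, i.e. $a \wedge (\Diamond^{-1})^n d > \bot$. By Lemma \ref{Lemma:leastm} there is an $s^D_\nomj$ with $a \wedge s^D_\nomj \wedge (\Diamond^{-1})^n d > \bot$; in particular $s^D_\nomj > \bot$, so $s^D_\nomj \in X_{A_D}$, and $a \wedge s^D_\nomj > \bot$. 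Since $s^D_\nomj$ is an atom of $\mathbf{A}_D$ (Lemma \ref{s^_nomj:atoms:or:bot}), $a \wedge s^D_\nomj > \bot$ forces $s^D_\nomj \leq a$, as required.

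Next, the \emph{second} condition: suppose $s^D_\nomi \in X_{A_D}$ and $a \in A_D$ with $s^D_\nomi \leq \Diamond^D a$; I must produce $s^D_\nomk \in X_{A_D}$ with $s^D_\nomk \leq a$ and $s^D_\nomi \leq \Diamond^D s^D_\nomk$. The idea is to observe that $s^D_\nomi \leq \Diamond^D a = \Diamond a \wedge D$, so $s^D_\nomi \wedge \Diamond a > \bot$, and then reuse the machinery of Lemma \ref{Lemma:leastm}: since $s^D_\nomi$ is an atom below $D$, it sits below some $(\Diamond^{-1})^n d$, and the Paste condition (ii) on $\Gamma$ — which is exactly what powered Lemma \ref{Lemma:leastm} — guarantees a witnessing nominal $\nomk$. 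Concretely, from $s^D_\nomi \leq \Diamond a$ one gets, via the adjunction and Lemma \ref{inverse>bot:dleqdiamomd}, that $a \wedge \Diamond^{-1} s^D_\nomi > \bot$, hence $a \wedge s^D_\nomk \wedge \Diamond^{-1} s^D_\nomi > \bot$ for some $s^D_\nomk \in X_{A_D}$ by Lemma \ref{Lemma:leastm}; this yields both $s^D_\nomk \leq a$ (as $s^D_\nomk$ is an atom of $\mathbf{A}_D$, arguing as before) and $s^D_\nomi \leq \Diamond s^D_\nomk$ (by Lemma \ref{inverse>bot:dleqdiamomd} again), and intersecting with $D$ and using $s^D_\nomi \leq D$, $s^D_\nomk \leq D$ upgrades this to $s^D_\nomi \leq \Diamond^D s^D_\nomk$.

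The main obstacle I anticipate is the bookkeeping in the second condition: making sure the passage between $\Diamond$ and $\Diamond^D$ is handled correctly (the inequality $\Diamond^D a \leq \Diamond a \wedge D$ is immediate, but one needs Lemma \ref{Lemma:DleqBoxD} and Lemma \ref{dleqdiamond:dleqdiamondd} to go back), and confirming that Lemma \ref{Lemma:leastm} can indeed be invoked at "depth one" with the relevant element in the role of $a$ — this should be fine since Lemma \ref{Lemma:leastm} is stated for arbitrary $a \in A_D$ and arbitrary $n$, but one should double-check that $s^D_\nomi$ being an atom below $D$ gives the required $s^D_\nomi \wedge (\Diamond^{-1})^n d > \bot$ hypothesis. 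Everything else is routine manipulation with adjoints and the fact that the $s^D_\nomj$ are atoms or $\bot$.
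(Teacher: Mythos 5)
Your treatment of the first permeation condition is correct and essentially identical to the paper's: locate an atom of $\mathbf{A}^{\delta}$ below $a$, observe that it lies below some $(\Diamond^{-1})^n d$, invoke Lemma \ref{Lemma:leastm}, and use the fact that the resulting $s^D_{\nomj}$ is an atom to upgrade $a \wedge s^D_{\nomj} > \bot$ to $s^D_{\nomj} \leq a$.

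The second condition, however, has a genuine gap, and it sits exactly at the point you flagged for a double-check. Lemma \ref{Lemma:leastm} is proved only for the elements $(\Diamond^{-1})^n d$ built from the distinguished atom $d \leq \bigwedge[\Gamma]$; its proof extracts the witnessing nominal from the Paste-closure of $\Gamma$ via Lemma \ref{Lemma:gammainGamma:dleq}, which characterises membership in $\Gamma$ specifically as ``$d \leq \widetilde{\nu}_D(\gamma)$''. It therefore cannot be invoked with $\Diamond^{-1} s^D_{\nomi}$ in the role of $(\Diamond^{-1})^n d$ for an arbitrary designated atom: the statement ``if $a \wedge \Diamond^{-1} s^D_{\nomi} > \bot$ then $a \wedge s^D_{\nomk} \wedge \Diamond^{-1} s^D_{\nomi} > \bot$ for some designated $s^D_{\nomk}$'' is, up to the $\Diamond$ versus $\Diamond^D$ bookkeeping, precisely the second permeation condition you are trying to prove. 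The repair you hint at---using $s^D_{\nomi} \leq (\Diamond^{-1})^n d$ to convert the hypothesis into $a \wedge (\Diamond^{-1})^{n+1} d > \bot$ and then applying Lemma \ref{Lemma:leastm} legitimately---does not close the gap either: the witness it returns satisfies only $d \leq \Diamond^{n+1}(a \wedge s^D_{\nomk})$, and nothing forces this $s^D_{\nomk}$ to be a $\Diamond$-successor of $s^D_{\nomi}$ (the length-$(n+1)$ path from $d$ into $a$ need not pass through $s^D_{\nomi}$ at step $n$), so the required $s^D_{\nomi} \leq \Diamond^D s^D_{\nomk}$ is lost. The paper's proof avoids this by keeping $\nomi$ inside the formula throughout: it establishes $d \leq (\Diamond^{D})^{m}(s^{D}_{\nomi} \wedge \Diamond^{D} b)$, uses the surjectivity of $\nu$ and $h$ to write $b = \widetilde{\nu}_{D}(\psi)$, concludes $\Diamond^{m}(\nomi \wedge \Diamond\psi) \in \Gamma$ by Lemma \ref{Lemma:gammainGamma:dleq}, applies the pasted property of $\Gamma$ to obtain $\Diamond^{m}(\nomi \wedge \Diamond(\nomj \wedge \psi)) \in \Gamma$, and then reads off both $s^{D}_{\nomj} \leq b$ and $s^{D}_{\nomi} \leq \Diamond^D s^D_{\nomj}$ from $d \leq (\Diamond^{D})^{m}(s^{D}_{\nomi} \wedge \Diamond^{D}(s^{D}_{\nomj} \wedge b))$. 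Some version of this detour through $\Gamma$, with $\nomi$ carried along inside the formula, is unavoidable.
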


\begin{proof}
For the first condition, let $b \in A_{D}$ such that $b > \bot$. Then $b \wedge (\Diamond^{-1})^{m}d > \bot$ for some $m \in \mathbb{N}$. By Lemma \ref{Lemma:leastm}, there is an $s^D_{\nomj}$ such that $b \wedge s^D_{\nomj} \wedge (\Diamond^{-1})^{m}d > \bot$. Then $d \leq \Diamond^{m}(b \wedge s^D_{\nomj})$ by Lemma \ref{inverse>bot:dleqdiamomd}. Now, $s^D_{\nomj} \neq \bot$, for otherwise, $d \leq \Diamond^{m(b)}\bot = \bot$, a contradiction. Hence, $s^D_{\nomj}$ is an atom by Lemma \ref{s^_nomj:atoms:or:bot}, and furthermore, $s^D_{\nomj} \in X_{A_D}$. We also claim that $s^D_{\nomj} \leq b$, for if not, $b \wedge s^D_{\nomj} = \bot$, giving $d \leq \bot$, a contradiction.

To prove the second condition, let $b \in A_D$ and $s^D_{\nomi} \in X_{A_D}$, and assume that $s^{D}_{\nomi} \leq \Diamond^{D}b$. Then $s^{D}_{\nomi} \leq \Diamond b \wedge D$, which means that $s^{D}_{\nomi} \leq \Diamond b \wedge (\Diamond^{-1})^{m}d$ for some $m \in \mathbb{N}$. Hence, $s^{D}_{\nomi} \wedge \Diamond b \wedge (\Diamond^{-1})^{m}d = s^{D}_{\nomi} > \bot$, and so by Lemma \ref{inverse>bot:dleqdiamomd} $d \leq \Diamond^{m}(s^{D}_{\nomi} \wedge \Diamond b)$. We thus have $d \leq (\Diamond^{D})^{m}(s^{D}_{\nomi} \wedge \Diamond b)$ by Lemma \ref{dleqdiamond:dleqdiamondd}, and therefore, since $d \leq D \leq (\Box^D)^mD$, $d \leq (\Diamond^{D})^{m}(s^{D}_{\nomi} \wedge \Diamond b) \wedge (\Box^{D})^{m}D$. But since $\mathbf{A}_D$ validates the axioms, we can show in the same way as in Lemma \ref{Lemma:DiamondBox:leq:Diamond} that for all $a, b \in A_D$ and all $n \in \mathbb{N}$, $(\Box^D)^na \wedge (\Diamond^D)^ \leq (\Diamond)^D)^n(a \wedge b)$, and thus $d \leq (\Diamond^{D})^{m}(s^{D}_{\nomi} \wedge \Diamond b \wedge D) = (\Diamond^{D})^{m}(s^{D}_{\nomi} \wedge \Diamond^{D} b)$. Now, since $\nu$ and $h$ are both surjective, there is some $\psi$ such that $d \leq (\Diamond^{D})^{m}(s^{D}_{\nomi} \wedge \Diamond^{D}\widetilde{\nu}_{D}(\psi)) = \widetilde{\nu}_{D}(\Diamond^{m}(\nomi \wedge \Diamond\psi))$. Hence, by Lemma \ref{Lemma:gammainGamma:dleq}, $\Diamond^{m}(\nomi \wedge \Diamond\psi) \in \Gamma$. This means that there is a $\nomj \in \mathsf{NOM} \cup \mathsf{NOM}'$ such that $\Diamond^{m}(\nomi \wedge \Diamond(\nomj \wedge \psi)) \in \Gamma$. Another application of Lemma \ref{Lemma:gammainGamma:dleq}, but in the opposite direction, gives $d \leq \widetilde{\nu}_{D}(\Diamond^{m}(\nomi \wedge \Diamond(\nomj \wedge \psi))) = (\Diamond^{D})^{m}(s^{D}_{\nomi} \wedge \Diamond^{D}(s^{D}_{\nomj} \wedge b))$.

Now, first note that $s^D_{\nomj} \neq \bot$, for else $d = \bot$, which is a contradiction. This means that $s^D_{\nomj}$ is an atom and in $X_{A_D}$. Second, $s^{D}_{\nomj} \leq b$, for otherwise, $s^{D}_{\nomj} \wedge b = \bot$, giving $d = \bot$, again a contradiction. Finally, to see that $s^{D}_{\nomi} \leq \Diamond^Ds^D_{\nomj}$, suppose for the sake of a contradiction that it is not. Then $s^{D}_{\nomi} \wedge \Diamond^Ds^D_{\nomj} = \bot$, and so we have that $d \leq (\Diamond^D)^{m}(s^{D}_{\nomi} \wedge \Diamond^D(s^{D}_{\nomj} \wedge b))
\leq (\Diamond^D)^{m}(s^{D}_{\nomi} \wedge \Diamond^D s^{D}_{\nomj}) = (\Diamond^D)^{m}\bot = \bot$,
which is a contradiction.
\end{proof}

\subsubsection{Algebraic completeness of $\mathbf{H}(@)\oplus\Sigma$}

We now turn our attention to the language and logics with the satisfaction operator $@$. In broad strokes, the strategy for proving the completeness of axiomatic extensions of $\mathbf{H}(@)$ with respect to hybrid $@$-algebras is similar to that for axiomatic extensions of $\mathbf{H}$ and hybrid algebras. There are, however, a few significant differences: firstly, in defining the element $D$ from which the algebra $\mathbf{A}_D$ is constructed, instead of closing a singleton set under $\Diamond^{-1}$, we need to include the interpretations of all relevant nominals. Via duality, this can be seen as analogous to including all states named by nominals when forming generated submodels for $\mathcal{H}(@)$. Secondly, the axioms involving the $@$-operator ensure that the constant interpretations of nominals cannot be $\bot$, simplifying the proof significantly.

We first give the statement of the main result together with its proof, and subsequently prove the lemmas needed.

\begin{theorem} \label{Thm:H@A:Comp}
For any set $\Sigma$ of $\mathcal{H}(@)$-formulas, the logic $\mathbf{H}(@)\oplus\Sigma$ is sound and complete with respect to the class of all hybrid @-algebras which validate $\Sigma$. That is to say, $\vdash_{\mathbf{H}(@)\oplus\Sigma} \phi$ iff $\models_{\mathsf{H@A}(\Sigma)} \phi \approx \top$.
\end{theorem}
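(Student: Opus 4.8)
The plan is to mirror the proof of Theorem \ref{Thm:HA:Comp}, adapting each step to accommodate the satisfaction operator. The soundness direction is again routine, so I would concentrate on completeness via the contrapositive: assuming $\nvdash_{\mathbf{H}(@)\oplus\Sigma}\phi$, I need to build a hybrid $@$-algebra validating $\Sigma$ together with an assignment refuting $\phi \approx \top$. As before, I would temporarily treat the nominals as modal constants and work with the orthodox Lindenbaum--Tarski algebra $\mathbf{A}$ of $\mathbf{H}(@)\oplus\Sigma$ over $\mathsf{PROP}$, so that $[\neg\phi] > \bot$ in $\mathbf{A}$ and, passing to the canonical extension $\mathbf{A}^\delta$, there is an atom $d \leq [\neg\phi]$. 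Since the axioms of $\mathbf{H}(@)$ are Sahlqvist under the orthodox interpretation, $\mathbf{A}^\delta \models \mathbf{H}(@)^\approx$ (though the equations in $\Sigma^\approx$ need not survive the passage, which is harmless as they will be reinstated via a homomorphic image closed under sorted substitution).

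The key difference, as the text flags, comes in the definition of $D$. Instead of closing $\{d\}$ under $\Diamond^{-1}$, I would form the join $D = \bigvee_{n\in\mathbb{N}} d_n$ where $d_0$ is taken to be $d$ \emph{together with} the interpretations $s_{\nomi}$ of all nominals $\nomi$ (i.e. $d_0 = d \vee \bigvee_{\nomi}s_{\nomi}$, or one closes the set $\{d\}\cup\{s_{\nomi}\mid\nomi\in\mathsf{NOM}\}$ under $\Diamond^{-1}$), so that every $s_{\nomi}$ lies below $D$. This is the algebraic counterpart of including all named states when forming generated submodels for $\mathcal{H}(@)$, and it is forced by the ($\mathit{Back}$) axiom: $\Diamond @_{\nomj}p \leq @_{\nomj}p$ means that satisfaction statements must be carried along. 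I would then define $\mathbf{A}_D$ exactly as in Theorem \ref{Thm:HA:Comp}, additionally setting $@^D_x a := @_x a \wedge D$ for the binary operator, and establish the analogues of Lemmas \ref{Lemma:DleqBoxD}, \ref{Lemma:A_D:closed} and \ref{Lemma:h:homomorphism}: that $D \leq \Box D$, that $A_D$ is closed under all operations including $@^D$, and that $h(a) = a\wedge D$ is a surjective homomorphism $\mathbf{A}\twoheadrightarrow\mathbf{A}_D$, whence $\mathbf{A}_D \models \mathbf{H}(@)\oplus\Sigma^\approx$. One checks $\mathbf{A}_D \not\models \phi\approx\top$ via the assignment $\nu_D = h\circ\nu$ using $d \leq D \wedge [\neg\phi]$.

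The step that makes the $@$-case genuinely simpler is the analogue of Lemma \ref{s^_nomj:atoms:or:bot}: here the $@$-axioms guarantee that each $s^D_{\nomi}$ is an atom and, crucially, is \emph{never} $\bot$. Indeed $@_{s_{\nomi}}s_{\nomi} = \top$ and ($\mathit{Intro}$) together with Proposition \ref{Lemma:@:BehavesCorrectly:ortho} pin $s_{\nomi}$ down as behaving like an atom, and since $s_{\nomi} \leq D$ by construction of $D$ we get $s^D_{\nomi} = s_{\nomi} > \bot$; atomicity then follows by an argument parallel to Lemma \ref{s^_nomj:atoms:or:bot} (using the ($\mathit{Nom}$)-style consequence $@_{s_{\nomi}}a \in \{\top,\bot\}$). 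Consequently the troublesome Cases 2 and 3 of Theorem \ref{Thm:HA:Comp} disappear entirely: there is no need to take products of hybrid algebras or to chase a second element $D'$. I would simply set $\frak{A}_D = (\mathbf{A}^-_D, X_{A_D})$ with $\mathbf{A}^-_D$ the reduct dropping the constants and $X_{A_D} = \{s^D_{\nomi}\mid \nomi\in\mathsf{NOM}\}$, which is a nonempty set of atoms; by Proposition \ref{Lemma:@:BehavesCorrectly:H@A} the binary operator $@^D$ satisfies ($\mathit{K@}$), ($\mathit{self}$-$\mathit{dual}$), ($\mathit{agree}$), ($\mathit{ref}$), ($\mathit{introduction}$) and ($\mathit{back}$), so $\frak{A}_D$ is a genuine hybrid $@$-algebra; it validates $\Sigma$ since $\mathbf{H}(@)\oplus\Sigma$ is closed under sorted substitution; and extending $\nu_D$ by $\nu'_D(\nomi) = s^D_{\nomi}$ refutes $\phi\approx\top$. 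The main obstacle I anticipate is purely bookkeeping: verifying that $@^D$ restricts well to $A_D$ and that $h$ commutes with $@^D$ (the identity $@_x a \wedge D = @_x(a\wedge D)\wedge D$, which should follow from ($\mathit{back}$) giving $\Diamond@_x a \leq @_x a$ hence a $\Box$-style absorption analogous to Lemma \ref{Lemma:DleqBoxD}), and carefully confirming that putting all the $s_{\nomi}$ into $d_0$ does not disturb the argument that $d \leq \widetilde{\nu}_D(\neg\phi)$ — it does not, since $d$ is still an atom below $[\neg\phi]$ and below $D$.
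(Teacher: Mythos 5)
Your overall architecture matches the paper's --- orthodox Lindenbaum--Tarski algebra, canonical extension, relativization to an element $D$ closed under $\Diamond^{-1}$, and the observation that the $@$-axioms make every $s^D_{\nomi}$ a nonzero atom so that the product constructions of Theorem \ref{Thm:HA:Comp} are unnecessary --- but your definition of $D$ contains a genuine error that undermines the key lemma. You propose $d_0 = d \vee \bigvee_{\nomi} s_{\nomi}$, so that every $s_{\nomi}$ lies below $D$ and hence $s^D_{\nomi} = s_{\nomi}$. But $s_{\nomi} = [\nomi]$ is in general \emph{not} an atom of the Lindenbaum--Tarski algebra or of $\mathbf{A}^{\delta}$: for a propositional variable $p$, neither $\nomi \to p$ nor $\nomi \to \neg p$ is a theorem of $\mathbf{H}(@)\oplus\Sigma$ in general, so $\bot < [\nomi \wedge p] < [\nomi]$; and since $[\nomi \wedge p] \leq [\nomi] \leq D$, all three elements survive in $A_D$, so $s^D_{\nomi}$ fails to be an atom of $\mathbf{A}_D$ as well. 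Then $X_{A_D}$ is not a set of atoms and $\frak{A}_D$ is not a hybrid $@$-algebra. The argument you invoke ``parallel to Lemma \ref{s^_nomj:atoms:or:bot}'' cannot rescue this: it depends on every atom below $D$ lying below $(\Diamond^{-1})^{n}e$ for one of finitely many generating \emph{atoms} $e$, which is exactly the feature your $d_0$ gives up; and the auxiliary claim $@_{s_{\nomi}}a \in \{\top,\bot\}$ is precisely what fails in orthodox interpretations when $s_{\nomi}$ is not an atom (cf.\ the Remark following Proposition \ref{Lemma:@:BehavesCorrectly:H@A}).

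The paper does the opposite of what you propose: for each of the finitely many nominals $\nomi_1,\dots,\nomi_m$ occurring in $\phi$ it first notes $s_{\nomi_i} \neq \bot$ (by ($\mathit{Ref}$), as you do), then \emph{chooses one atom} $d^i_0 \leq s_{\nomi_i}$ of $\mathbf{A}^{\delta}$, closes the atoms $d, d^1_0, \dots, d^m_0$ under $\Diamond^{-1}$, and takes $D$ to be the join of the results. Relativizing to this $D$ \emph{cuts $s_{\nomi}$ down} to a single atom: Lemma \ref{Lemma:@:s^D_i:atoms} shows, via Lemma \ref{alD^-1sbiffblDsa: alDisblDsa} and the axioms ($\mathit{Intro}$), ($\mathit{Back}$), ($K_{@}$) and ($\mathit{Selfdual}$), that any two atoms of $\mathbf{A}^{\delta}$ below $s_{\nomi}\wedge D$ coincide. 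Your definition, by putting all of $s_{\nomi}$ below $D$, defeats the purpose of the relativization. A secondary divergence: you set $@^D_{s^D_{\nomj}}a := @_{s_{\nomj}}a \wedge D$, whereas the paper defines $@^D$ from scratch by cases ($D$ if $s^D_{\nomj}\leq a$, and $\bot$ if $s^D_{\nomj} \leq \neg^D a$), which is well defined precisely because $s^D_{\nomj}$ is an atom; the compatibility of $h$ with $@$ is then checked through Proposition \ref{Lemma:@:BehavesCorrectly:ortho} rather than through a ($\mathit{back}$)-style absorption identity. Once the definition of $D$ is repaired, the remainder of your outline goes through essentially as in the paper.
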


\begin{proof}
Suppose $\nvdash_{\mathbf{H}(@)\oplus\Sigma} \phi$. We need to find a hybrid @-algebra $\frak{A}$ and an assignment $v$ such that $\frak{A}, v \not\models \phi \approx \top$. As in the proof of Theorem \ref{Thm:HA:Comp}, we will work with an orthodox interpretation of $\mathcal{H}(@)$ to begin with, and then construct an appropriate hybrid $@$-algebra from it.

As before, we begin with the orthodox Lindenbaum-Tarski algebra of $\mathbf{H}(@)\oplus\Sigma$ over $\mathsf{PROP}$. For simplicity, denote it by $\mathbf{A}$. In the usual way, we can show that $\mathbf{A}$ validates precisely the theorems of $\mathbf{H}(@)\oplus\Sigma$. Also, $\mathbf{A}, \nu \not\models \phi \approx \top$, where $\nu$ is the natural map taking $p$ to $[p]$. In the same way as in the proof of Theorem \ref{Thm:HA:Comp}, we can also show that $[\neg\phi] > \bot$ in $\mathbf{A}$.

Next, consider the orthodox canonical extension $\mathbf{A}^{\delta}$ of $\mathbf{A}$. Since all axioms of $\mathbf{H}(@)$ are Sahlqvist---fixing the nominal coordinate, $@$ plays the role of a unary diamond---they are canonical and hence valid in $\mathbf{A}^{\delta}$. We know that $[\neg\phi] > \bot$ in $\mathbf{A}^{\delta}$, so since $\mathbf{A}^{\delta}$ is atomic, there is an atom $d \in \mathbf{A}^{\delta}$ such that $d \leq [\neg\phi]$. 
Denote the constant interpretations of the nominals occurring in $\phi$ by $s_{\nomi_1}, s_{\nomi_2}, \ldots, s_{\nomi_m}$. Note that, for each $1 \leq i \leq m$, $s_{\nomi_i} \neq \bot$ in $\mathbf{A}^{\delta}$, for otherwise, since $\mathbf{A}^{\delta}$ validates the axioms of $\mathbf{H}(@)$, $\top = @_{s_{\nomi_i}}s_{\nomi_i} = @_{s_{\nomi_i}}\bot = \bot$, a contradiction. Hence, we also have atoms $d^1_0, d^2_0, \ldots, d^m_0$ in $\mathbf{A}^{\delta}$ such that $d^1_0 \leq s_{\nomi_1}, d^{2}_{0} \leq s_{\nomi_2}, \ldots, d^{m}_{0} \leq s_{\nomi_m}$. Now, let $d^0_0 = d$, and suppose that for each $0 \leq i \leq m$, $d^i_n$ is already defined. For each $0 \leq i \leq m$, let $d^i_{n\; +\; 1} = \Diamond^{-1}d^i_n$ and set
\begin{eqnarray*}
D_i & = & \bigvee_{n\; \in\; \mathbb{N}}d^i_n.
\end{eqnarray*}
Furthermore, let
\begin{displaymath}
D = \bigvee_{0 \leq i \leq m}D_i
\end{displaymath}
and $\mathbf{A}_D  = (A_D, \wedge^D, \vee^D, \neg^D, \bot^D, \top^D, \Diamond^D, @^D, \{s^D_{\nomj}\}_{\nomj \in \mathsf{NOM}})$,
where $A_{D} = \{a \wedge D \mid a \in A\}$, $\wedge^D$ and $\vee^D$ are the restriction of $\wedge$ and $\vee$ to $A_D$,
\begin{align*}
\neg^Da &= \neg a\wedge D & \Diamond^{D}a &= \Diamond a \wedge D & \top^D &= D \\
\bot^D &= \bot & s^D_{\nomj} &= s_{\nomj} \wedge D
\end{align*}
and, for all $\nomj \in \mathsf{NOM}$,
\[
@^{D}_{s^D_\nomj} a =
\left\{ \begin{array}{ll}
D &\mbox{if } s^D_{\nomj} \leq a\\
\bot &\mbox{if } s^D_{\nomj} \leq \neg^Da
\end{array} \right.
\]

By Lemma \ref{Lemma:@:s^D_i:atoms} each $s^D_{\nomj}$ is an atom of $\mathbf{A}^{\delta}$, so it follows that $@^{D}_{s^D_\nomj} a$ is defined for each pair $(s^D_{\nomj}, a)$ where $\nomj$ is a nominal and $a \in A_D$. By Lemma \ref{Lemma:@:A_D:closed} below, $A_D$ is closed under the above operations, so $\mathbf{A}_D$ is an algebra. Furthermore, from Lemma \ref{Lemma:@:h:homomorphism} it follows that $\mathbf{A}_D$ is an orthodox interpretation of $\mathcal{H}(@)$ and that $\mathbf{A}_D \models \Sigma^{\approx}$. To show that $\mathbf{A}_D \not\models \phi \approx \top$, let $\nu_D$: $\mathsf{PROP} \to A_D$ be the assignment given by $\nu_D(p) = h(\nu(p))$, where $h$ is the homomorphism from $\mathbf{A}$ onto $\mathbf{A}_D$ defined in Lemma \ref{Lemma:@:h:homomorphism}. Using the fact that $h$ is a homomorphism, we can show by structural induction that $\widetilde{\nu}_D(\psi) = h(\widetilde{\nu}(\psi))$ for all $\mathcal{H}(@)$-formulas $\psi$ that use variables from $\mathsf{PROP}$. Now, since $d \leq D$ and $d \leq [\neg\phi]$, $\widetilde{\nu}_D(\neg\phi) = h(\widetilde{\nu}(\neg\phi)) = \widetilde{\nu}(\neg\phi) \wedge D = [\neg\phi] \wedge D \geq d > \bot$.
Hence, $\widetilde{\nu}_D(\phi) \neq D$.

Now, if we can find a suitable designated set of atoms in $\mathbf{A}_D$, we can drop the constant interpretations and we would be done. Luckily, unlike for the language $\mathcal{H}$, the @ operator makes things easier for us as it ensures that all $s^D_{\nomj}$ are atoms of $\mathbf{A}_D$ (see Lemma \ref{Lemma:@:s^D_i:atoms}). So let $\frak{A}_D = (\mathbf{A}^{-}_D, X_{A_D})$, where $\mathbf{A}^{-}_D$ is the reduct of $\mathbf{A}_D$ obtained by omitting the constant interpretations of nominals and $X_{A_D} = \{s_{\nomj}^{D} \mid \nomj \in \mathsf{NOM}\}$. Then it follows from the foregoing that $\frak{A}_D$ is a hybrid @-algebra, and since $\Sigma$ is closed under \emph{Sorted substitution}, $\frak{A}_D \models \Sigma^{\approx}$. To show that $\frak{A}_D \not\models \phi \approx \top$, consider the assignment $\nu'_D$ which extends $\nu_D$  from $\mathsf{PROP}$ to $\mathsf{PROP} \cup \mathsf{NOM}$ by simply setting $\nu'_D(\nomj) = s_{\nomj}^{D}$ for each $\nomj \in \mathsf{NOM}$. Clearly, $\widetilde{\nu}'_D(\psi) = \widetilde{\nu}_D(\psi)$ for all $\mathcal{H}(@)$-formulas $\psi$. Hence, $\widetilde{\nu}'_D(\phi) = \widetilde{\nu}_D(\phi) \neq \widetilde{\nu}_D(\top) = \widetilde{\nu}'_D(\top)$.
\end{proof}
	
We now state and prove the lemmas used in the proof of Theorem \ref{Thm:H@A:Comp}. In what follows, $\mathbf{A}$, $\mathbf{A}^{\delta}$, $\mathbf{A}_D$, $\nu$, $\nu_D$ and $\frak{A}_D$ will be as in the proof of Theorem \ref{Thm:H@A:Comp}.	
	
\begin{lemma}\label{Lemma:@:DLeqBoxD}
$D \leq \Box D$
\end{lemma}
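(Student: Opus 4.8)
The statement is the exact analogue, for the $\mathcal{H}(@)$ construction, of Lemma \ref{Lemma:DleqBoxD} from the $\mathcal{H}$ case, so the plan is to replay that argument with the only new ingredient being that here $D$ is a join of several ``cones'' $D_i$ rather than of a single one. Recall that each $D_i = \bigvee_{n\in\mathbb{N}} d^i_n$ with $d^i_0$ a fixed atom and $d^i_{n+1} = \Diamond^{-1} d^i_n$, and $D = \bigvee_{0\le i\le m} D_i = \bigvee_{0\le i\le m}\bigvee_{n\in\mathbb{N}} d^i_n$.

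The key steps, in order: First, use monotonicity of $\Box$ to push the join out: $\Box D = \Box\bigvee_{i,n} d^i_n \ge \bigvee_{i,n}\Box d^i_n$. Second, for each $i$ and each $n\ge 1$ we have $d^i_n = \Diamond^{-1} d^i_{n-1}$, so $\Box d^i_n = \Box\Diamond^{-1} d^i_{n-1}$; since $\Diamond^{-1}\dashv\Box$ (the left adjoint of $\Box$ is $\Diamond^{-1}$ in $\mathbf{A}^\delta$, as recalled at the end of Section \ref{sec:prelim}), the unit of the adjunction gives $d^i_{n-1}\le\Box\Diamond^{-1} d^i_{n-1} = \Box d^i_n$. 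Third, collect these inequalities over all $i$ and all $n\ge 1$: $\bigvee_{0\le i\le m}\bigvee_{n\ge 1}\Box d^i_n \ge \bigvee_{0\le i\le m}\bigvee_{n\ge 1} d^i_{n-1} = \bigvee_{0\le i\le m}\bigvee_{n\in\mathbb{N}} d^i_n = D$. Chaining with the first step, $\Box D \ge \bigvee_{i,n}\Box d^i_n \ge D$, which is the claim. (Note that discarding the $n=0$ terms $\Box d^i_0$ only weakens the lower bound, so this is harmless.)

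There is no real obstacle here — every join involved is a join of elements of $\mathbf{A}^\delta$, which is complete, so all the suprema exist; monotonicity of $\Box$ is immediate and the adjunction unit $d\le\Box\Diamond^{-1} d$ is exactly the fact used in Lemma \ref{Lemma:DleqBoxD}. The only point requiring a sentence of care is the reindexing in the third step, i.e.\ observing that as $n$ ranges over $\mathbb{N}\setminus\{0\}$ the element $d^i_{n-1}$ ranges over all of $\{d^i_n \mid n\in\mathbb{N}\}$, so the double join over $i$ and $n\ge 1$ of the $d^i_{n-1}$ recovers $D$ in full. Thus the proof is essentially a two-line computation parallel to that of Lemma \ref{Lemma:DleqBoxD}, and in the write-up one could even simply say ``the proof is identical to that of Lemma \ref{Lemma:DleqBoxD}, replacing the single sequence $(d_n)$ by the finitely many sequences $(d^i_n)_{0\le i\le m}$ and taking the join of all of them.''
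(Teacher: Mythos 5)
Your proposal is correct and follows essentially the same argument as the paper: push the join inside $\Box$ by monotonicity, apply the adjunction unit $a \leq \Box\Diamond^{-1}a$ to each $d^i_{n-1}$, and reindex to recover $D$. (If anything, your indexing over $0 \leq i \leq m$ is more careful than the paper's, which drops the $i=0$ cone in its displayed joins.)
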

	
\begin{proof}
By the definition of $D$, $\Box D = \Box \bigvee_{1 \leq i \leq m} D_i$, and so, using the definition of $D_i$, we get $\Box D \geq \Box \bigvee_{1 \leq i \leq m}\left(\bigvee_{n \in \mathbb{N}}d^{i}_{n}\right)$. Hence, by the monotonicity of $\Box$, $\Box D \geq \bigvee_{1 \leq i \leq m}\left(\bigvee_{n \in \mathbb{N}} \Box  d^{i}_n \right)$, which gives $\Box D \geq \bigvee_{1 \leq i \leq m}\left[\left(\bigvee_{n \in \mathbb{N} - \{0\}}\Box\Diamond^{-1}d^{i}_{n - 1}\right) \vee \Box d^{i}_0 \right]$ by the definition of $d^{i}_{n}$. But then $\Box D \geq \bigvee_{1 \leq i \leq m}\left(\bigvee_{n \in \mathbb{N} - \{0\}} \Box  \Diamond^{-1} d^{i}_{n - 1}\right)$. Since $\Diamond^{-1}$ and $\Box$ form an adjoint pair, we have that $a \leq \Box \Diamond^{-1} a$ for all $a \in \mathbf{A}^{\delta}$, so $\Box D \geq \bigvee_{1 \leq i \leq m}\left(\bigvee_{n \in \mathbb{N} - \{0\}} d^{i}_{n - 1}\right)$. Hence, by the definition of $D_i$, $\Box D \geq \bigvee_{1 \leq i \leq m} D_{i}$, which means $\Box D \geq D$, by the definition of $D$.
\end{proof}

Using this result, we can prove the following two lemmas:

\begin{lemma} \label{Lemma:@:A_D:closed}
$A_D$ is closed under the operations $\wedge^D$, $\vee^D$, $\neg^D$, $\Diamond^D$ and $@^D$.
\end{lemma}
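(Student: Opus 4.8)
The plan is to handle each of the five operations $\wedge^D$, $\vee^D$, $\neg^D$, $\Diamond^D$ and $@^D$ in turn, showing that applying it to element(s) of $A_D$ again yields an element of $A_D$, i.e.\ something of the form $a \wedge D$ with $a \in A$. For $\wedge^D$ and $\vee^D$ this is immediate: if $a = a' \wedge D$ and $b = b' \wedge D$ with $a', b' \in A$, then $a \wedge^D b = (a' \wedge b') \wedge D$ and $a \vee^D b = (a' \vee b') \wedge D$, and $A$ is closed under $\wedge$ and $\vee$. For $\neg^D$ and $\Diamond^D$ I would simply reuse verbatim the computations from Lemma~\ref{Lemma:A_D:closed}: writing $a = a' \wedge D$ with $a' \in A$, one has $\neg^D a = \neg(a' \wedge D) \wedge D = (\neg a' \vee \neg D) \wedge D = \neg a' \wedge D \in A_D$, and for $\Diamond^D$ one shows $\Diamond^D a = \Diamond a' \wedge D$ by sandwiching: $\Diamond(a' \wedge D) \wedge D \leq \Diamond a' \wedge D$ by monotonicity, and $\Diamond(a' \wedge D) \wedge D \geq \Diamond a' \wedge \Box D \wedge D = \Diamond a' \wedge D$ using Lemma~\ref{Lemma:@:DLeqBoxD} (the $@$-version of $D \leq \Box D$). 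Since $A$ is closed under $\neg$ and $\Diamond$, both results lie in $A_D$.

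The genuinely new case — and the one I expect to be the only real obstacle — is closure under $@^D$. Here I need to check that for every nominal $\nomj$ and every $a \in A_D$, the value $@^D_{s^D_\nomj} a$ (which by definition is either $D$ or $\bot$) actually lies in $A_D$. Since $D = \top^D \in A_D$ (take $a' = \top$) and $\bot = \bot^D \in A_D$ (take $a' = \bot$), the value is automatically in $A_D$ in either case. The subtler point is that the definition of $@^D$ is only well-posed because, by Lemma~\ref{Lemma:@:s^D_i:atoms}, each $s^D_\nomj$ is an atom of $\mathbf{A}^\delta$ (hence of $\mathbf{A}_D$): this guarantees that exactly one of the two clauses $s^D_\nomj \leq a$ and $s^D_\nomj \leq \neg^D a$ holds for each $a \in A_D$, so that $@^D_{s^D_\nomj} a$ is a single well-defined element of $\{D, \bot\} \subseteq A_D$. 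I would remark explicitly that this case therefore reduces to the observation that $D, \bot \in A_D$ together with the well-definedness supplied by Lemma~\ref{Lemma:@:s^D_i:atoms}, rather than requiring any fresh computation.

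Putting these together: $A_D$ is closed under all five operations, which is exactly the statement. If the exposition warrants it, I would note in a sentence that the computations for $\neg^D$ and $\Diamond^D$ are literally those of Lemma~\ref{Lemma:A_D:closed} with $D$ now denoting the element built in the proof of Theorem~\ref{Thm:H@A:Comp}, and that the crucial ingredient in both is $D \leq \Box D$ from Lemma~\ref{Lemma:@:DLeqBoxD}. The only place where this proof differs structurally from the $\mathcal{H}$-case is the extra operation $@^D$, and there the work has been offloaded to Lemma~\ref{Lemma:@:s^D_i:atoms}.
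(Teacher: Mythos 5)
Your proof is correct and follows the paper's argument exactly: the $\neg^D$ and $\Diamond^D$ cases are handled verbatim as in Lemma \ref{Lemma:A_D:closed} using Lemma \ref{Lemma:@:DLeqBoxD}, and the $@^D$ case reduces to observing that the only possible values are $D$ and $\bot$, both in $A_D$. Your added remark on well-definedness via Lemma \ref{Lemma:@:s^D_i:atoms} is a point the paper makes in the proof of Theorem \ref{Thm:H@A:Comp} rather than in the lemma itself, but it is the same observation.
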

	
\begin{proof}
The case for $@^D$ follows since $@^D_{s^D_\nomj}a$ can, by definition, only take the values $D$ and $\bot$. The other cases are proved in the same way as in Lemma \ref{Lemma:A_D:closed} using Lemma \ref{Lemma:@:DLeqBoxD}.
\end{proof}			
			
\begin{lemma} \label{Lemma:@:h:homomorphism}
The map $h$: $A \to A_D$ defined by $h(a) = a \wedge D$ is a surjective homomorphism from $\mathbf{A}$ onto $\mathbf{A}_D$.
\end{lemma}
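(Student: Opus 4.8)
The plan is to verify that $h(a) = a \wedge D$ respects each operation of the signature, proceeding exactly as in Lemma \ref{Lemma:h:homomorphism} for the Boolean and $\Diamond$ parts, and then handling the two new cases, namely the constants $s_\nomj$ and the satisfaction operator $@^D$. Surjectivity is immediate from the definition of $A_D = \{a \wedge D \mid a \in A\}$. For the Boolean connectives, preservation of $\wedge$ is trivial since $D \wedge D = D$; preservation of $\vee$ is immediate; and preservation of negation is the identity $h(\neg a) = \neg a \wedge D = (\neg a \vee \neg D)\wedge D = \neg(a \wedge D)\wedge D = \neg^D h(a)$, using $\neg D \wedge D = \bot$. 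Preservation of $\bot, \top$ is clear from $\bot^D = \bot$ and $\top^D = D = h(\top)$. For $\Diamond$, one direction is monotonicity of $\Diamond$, giving $h(\Diamond a) = \Diamond a \wedge D \geq \Diamond(a \wedge D)\wedge D = \Diamond^D h(a)$; the reverse uses Lemma \ref{Lemma:@:DLeqBoxD}, since $\Diamond^D h(a) = \Diamond(a\wedge D)\wedge D \geq \Diamond a \wedge \Box D \wedge D = \Diamond a \wedge D = h(\Diamond a)$, exactly as in Lemma \ref{Lemma:h:homomorphism}.

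For the constants, we must check $h(s_\nomj) = s^D_\nomj$, which is literally the defining equation $s^D_\nomj = s_\nomj \wedge D$, so this case is immediate. The only genuinely new work is the satisfaction operator: we must show $h(@_{s_\nomi}a) = @^D_{s^D_\nomi}h(a)$ for all $a \in A$ and all nominals $\nomi$. By Lemma \ref{Lemma:@:s^D_i:atoms}, $s^D_\nomi$ is an atom of $\mathbf{A}^\delta$, hence a nonzero element, so for any $b \in A_D$ exactly one of $s^D_\nomi \leq b$ and $s^D_\nomi \leq \neg^D b$ holds, and $@^D_{s^D_\nomi}b$ is well-defined and equals $D$ in the first case and $\bot$ in the second. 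On the orthodox side, Proposition \ref{Lemma:@:BehavesCorrectly:ortho} tells us that in $\mathbf{A}^\delta$ (which validates the $\mathcal{H}(@)$-axioms) we have $@_{s_\nomi}a = \top$ iff $s_\nomi \leq a$, and $@_{s_\nomi}a = \bot$ iff $s_\nomi \leq \neg a$; moreover $@_{s_\nomi}a$ is a fixpoint of $\Box^{-1}$ (via (\textit{back}) and adjunction), which forces $@_{s_\nomi}a \wedge D$ to collapse to $D$ or $\bot$ according to whether $@_{s_\nomi}a = \top$ or $\bot$. So the argument splits into two cases. If $s^D_\nomi \leq a \wedge D = h(a)$, then in particular $s_\nomi \wedge D \leq a$; I would then argue — using $s_\nomi$'s behaviour together with the fact that $s^D_\nomi = s_\nomi \wedge D$ is an atom below $D$ and $D$ is closed downward under $\Diamond^{-1}$, i.e. $D \leq \Box D$ — that this forces $@_{s_\nomi}a \wedge D = D$, hence $h(@_{s_\nomi}a) = D = @^D_{s^D_\nomi}h(a)$. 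If instead $s^D_\nomi \leq \neg^D h(a) = \neg a \wedge D$, the symmetric computation gives $@_{s_\nomi}a \wedge D = \bot$, so both sides are $\bot$.

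The main obstacle is the satisfaction-operator case, and specifically reconciling the \emph{globally defined} constant interpretation $@_{s_\nomi}$ on $\mathbf{A}^\delta$ with the \emph{by-cases} definition of $@^D_{s^D_\nomi}$ on $A_D$: one must be careful that the dichotomy ``$s^D_\nomi \leq a$ or $s^D_\nomi \leq \neg^D a$'' in $A_D$ matches ``$@_{s_\nomi}a = \top$ or $@_{s_\nomi}a = \bot$'' in $\mathbf{A}^\delta$ after relativizing by $D$. The point is that $s^D_\nomi$ being an atom below $D$ makes it comparable to every element of $A_D$, while Lemma \ref{Lemma:@:DLeqBoxD} ($D \leq \Box D$) and the (\textit{back}) axiom guarantee that $@_{s_\nomi}a$, being $\Box^{-1}$-closed (equivalently, $\Diamond^{-1}$-closed from above), restricts cleanly to $D$; these two facts together pin down $h(@_{s_\nomi}a)$ to the correct value. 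Everything else is a routine transcription of Lemma \ref{Lemma:h:homomorphism}.
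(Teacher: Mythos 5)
Your treatment of surjectivity, the Boolean operations, $\Diamond$ and the constants is correct and matches the paper's Lemma \ref{Lemma:h:homomorphism} transcription, and you are right that the only substantive new case is the satisfaction operator. You are also right to organise that case around the exhaustive dichotomy ``$s^D_{\nomi}\leq h(a)$ or $s^D_{\nomi}\leq \neg^D h(a)$'' (exhaustive because $s^D_{\nomi}$ is an atom, by Lemma \ref{Lemma:@:s^D_i:atoms}) rather than around ``$s_{\nomi}\leq a$ or $s_{\nomi}\leq \neg a$'', which need not be exhaustive since $s_{\nomi}$ itself need not be an atom of $\mathbf{A}$. The problem is that the implication you need in each branch --- that $s^D_{\nomi}\leq h(a)$ forces $@_{s_{\nomi}}a\wedge D= D$, and dually --- is exactly where you write ``I would then argue \ldots\ that this forces \ldots'', and no argument is given. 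That implication is the entire mathematical content of the $@$-case, and the ingredients you list do not by themselves deliver it.

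Concretely: from $s^D_{\nomi}\leq h(a)$ you get $s^D_{\nomi}=s_{\nomi}\wedge D\leq s_{\nomi}\wedge a\leq @_{s_{\nomi}}a$ by ($\mathit{introduction}$); since $s^D_{\nomi}$ is an atom below $D=\bigvee_{0\leq j\leq m}D_j$, it lies below $(\Diamond^{-1})^{n}d^{i}_{0}$ for a \emph{single} index $i$ and some $n$, whence $d^{i}_{0}\leq\Diamond^{n}@_{s_{\nomi}}a\leq @_{s_{\nomi}}a$ by Lemma \ref{alD^-1sbiffblDsa: alDisblDsa} and ($\mathit{back}$); and ($\mathit{back}$) together with ($\mathit{self}$-$\mathit{dual}$) gives $@_{s_{\nomi}}a\leq\Box^{k}@_{s_{\nomi}}a$, so that $D_i\leq @_{s_{\nomi}}a$. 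But this controls only the one generator $d^{i}_{0}$ whose $\Diamond^{-1}$-cone contains $s^D_{\nomi}$; nothing in your sketch forces $d^{j}_{0}\leq @_{s_{\nomi}}a$ for the remaining generators $j\neq i$, so you obtain only $@_{s_{\nomi}}a\wedge D\geq D_i$, not $=D$. Since $@_{s_{\nomi}}a$ is in general neither $\top$ nor $\bot$ (Proposition \ref{Lemma:@:BehavesCorrectly:ortho} yields a value only when $s_{\nomi}\leq a$ or $s_{\nomi}\leq\neg a$), your assertion that $@_{s_{\nomi}}a\wedge D$ ``collapses to $D$ or $\bot$'' is precisely what must be proved, and proving it requires relating the several generators $d^{0}_{0},\dots,d^{m}_{0}$ to one another --- via ($\mathit{agree}$) and the way the $d^{j}_{0}$ are chosen below the $s_{\nomi_j}$ --- which your proposal never does. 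The paper's own proof instead argues in one line in each of the two cases $s_{\nomj}\leq a$ and $s_{\nomj}\leq\neg a$ via Proposition \ref{Lemma:@:BehavesCorrectly:ortho}, leaning on Lemma \ref{Lemma:@:s^D_i:atoms} for exhaustiveness; whichever route one prefers, the step you deferred with ``I would then argue'' is the one that carries the weight, so as it stands the proposal does not prove the lemma.
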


\begin{proof}
That $h$ is surjective is obvious. In verifying that $h$ is a homomorphism, all cases except those for $\neg$, $\Diamond$, and $@_\nomj$ are straightforward. The cases for $\neg$ and $\Diamond$ are proved in exactly the same way as in Lemma \ref{Lemma:h:homomorphism}. So we need only check $@$. Since Lemma \ref{Lemma:@:s^D_i:atoms} guarantees that all $s_{\nomj}$ are atoms, the following two cases are exhaustive:

\paragraph{Case 1: $s_{\nomj} \leq a$.} In this case, by Proposition \ref{Lemma:@:BehavesCorrectly:ortho}, $h(@_{s_\nomj} a) = h(\top) = D$. From $s_{\nomj} \leq a$, we have $s_{\nomj} \wedge D \leq a \wedge D$. Hence, $h(s_{\nomj}) \leq h(a)$, which means that $@^D_{h(s_\nomj)}h(a) = D$.

\paragraph{Case 2: $s_{\nomj} \leq \neg a$.} Here $h(@_{s_\nomj} a) = h(\bot) = \bot$. From $s_{\nomj} \leq \neg a$, we have $s_{\nomj} \wedge D \leq \neg a \wedge D$. Hence, $h(s_{\nomj}) \leq h(\neg a) = \neg^Dh(a)$, giving $@^D_{h(s_\nomj)}h(a) = \bot$.
\end{proof}

\begin{lemma} \label{Lemma:@:s^D_i:atoms}
For each $\nomi \in \mathsf{NOM}$, $s_{\nomi}^D$ an atom of $\mathbf{A}^{\delta}$, and hence of $\mathbf{A}_{D}$.
\end{lemma}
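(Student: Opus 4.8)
The plan is to establish, for an arbitrary nominal $\nomi$, the two defining properties of an atom of $\mathbf{A}^{\delta}$ for the element $s_{\nomi}^{D}=s_{\nomi}\wedge D$: that it is nonzero, and that at most one atom of $\mathbf{A}^{\delta}$ lies below it. The argument for the second property is a direct adaptation of the proof of Lemma~\ref{s^_nomj:atoms:or:bot}; the genuinely new work lies in handling nonzero-ness and the fact that $D$ is now a join of the several cones $D_{0},\dots,D_{m}$ rather than the single cone of one atom.

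For nonzero-ness, note first that $\mathbf{A}^{\delta}$ validates the $@$-axioms (as observed in the proof of Theorem~\ref{Thm:H@A:Comp}, these are Sahlqvist, with $@$ playing the role of a unary diamond, hence canonical). Thus $@_{s_{\nomi}}s_{\nomi}=\top$ by $(\mathit{ref})$ while $@_{s_{\nomi}}\bot=\bot$ because $@$ preserves the empty join in its second coordinate, so $s_{\nomi}=\bot$ would force $\top=@_{s_{\nomi}}s_{\nomi}=@_{s_{\nomi}}\bot=\bot$; hence $s_{\nomi}\neq\bot$ in $\mathbf{A}^{\delta}$. When $\nomi$ is one of the nominals $\nomi_{1},\dots,\nomi_{m}$ occurring in $\phi$, the chosen root satisfies $d^{i}_{0}\leq s_{\nomi_{i}}$ and $d^{i}_{0}\leq D_{i}\leq D$, so $s_{\nomi}^{D}\geq d^{i}_{0}>\bot$. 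For a nominal not occurring in $\phi$ one needs its interpretation to be reached somewhere inside $D$; this is the point at which the $@$-axioms $(\mathit{back})$ and $(\mathit{agree})$ must enter, mirroring the relational fact that a generated submodel for $\mathcal{H}(@)$ must contain every named state and that $@$ makes all named states mutually accessible from every point of the submodel.

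For the uniqueness of the atom below $s_{\nomi}^{D}$, the key observation is that, although it is not among the axioms of $\mathbf{H}(@)$, the $(\mathit{Nom})$-type inequality $\Diamond^{n}(s_{\nomi}\wedge a)\leq\Box^{m}(\neg s_{\nomi}\vee a)$ is \emph{derivable} in $\mathbf{H}(@)$: from $(\mathit{Intro})$ and $(\mathit{Selfdual})$ one obtains $\nomi\wedge p\to@_{\nomi}p$ and $@_{\nomi}p\to(\nomi\to p)$, from $(\mathit{Back})$ one obtains $\Diamond^{n}@_{\nomi}p\to@_{\nomi}p$ and $@_{\nomi}p\to\Box^{m}@_{\nomi}p$, and chaining these yields $\Diamond^{n}(\nomi\wedge p)\to\Box^{m}(\nomi\to p)$; hence $\mathbf{A}$ validates the corresponding equation, which being Sahlqvist is also valid in $\mathbf{A}^{\delta}$. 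Now suppose $a,b$ are atoms of $\mathbf{A}^{\delta}$ below $s_{\nomi}^{D}$. Since atoms of a canonical extension are completely join-prime (its Boolean reduct being a complete atomic Boolean algebra), $a$ lies below some $d^{j}_{n}=(\Diamond^{-1})^{n}d^{j}_{0}$; Lemma~\ref{alD^-1sbiffblDsa: alDisblDsa} gives $d^{j}_{0}\leq\Diamond^{n}a$, and since $a\leq s_{\nomi}$ this reads $d^{j}_{0}\leq\Diamond^{n}(s_{\nomi}\wedge a)$, so the derived inequality gives $d^{j}_{0}\leq\Box^{m}(\neg s_{\nomi}\vee a)$ for all $m$, whence $(\Diamond^{-1})^{m}d^{j}_{0}\leq\neg s_{\nomi}\vee a$ for all $m$ by the adjunction $\Diamond^{-1}\dashv\Box$; therefore $D_{j}\leq\neg s_{\nomi}\vee a$ and so $s_{\nomi}\wedge D_{j}=a$. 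Likewise $s_{\nomi}\wedge D_{j'}=b$ for the cone $j'$ with $b\leq D_{j'}$, and if $j=j'$ we are done.

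The main obstacle is exactly the residual case $j\neq j'$, together with the general-nominal part of nonzero-ness: one must show that the cones $D_{0},\dots,D_{m}$ all meet $s_{\nomi}$ in one and the same atom (and meet it at all), so that $s_{\nomi}^{D}$ does not break up into several atoms. I expect this to need more than the $\Diamond^{-1}$-closure used in the definition of $D$: either $D$ should additionally be closed under the (left, say) adjoints of the operators $@_{s_{\nomi}}$, or the roots $d^{i}_{0}$ must be chosen coherently --- e.g.\ as $s_{\nomi_{i}}\wedge D_{0}$ whenever that is nonzero --- after which $(\mathit{back})$ and $(\mathit{agree})$ identify the $\nomi$-witnesses visible from the various roots. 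Once each $s_{\nomi}^{D}$ is shown to be an atom of $\mathbf{A}^{\delta}$, it is automatically an atom of $\mathbf{A}_{D}$, since $s_{\nomi}^{D}=s_{\nomi}\wedge D\in A_{D}$, $A_{D}\subseteq A^{\delta}$, and $\bot^{D}=\bot$, so nothing in $A_{D}$ lies strictly between $\bot$ and $s_{\nomi}^{D}$.
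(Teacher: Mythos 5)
Your proposal correctly reproduces the parts of the argument that carry over from Lemma~\ref{s^_nomj:atoms:or:bot} (the derivation of the $(\mathit{Nom})$-inequality from $(\mathit{Intro})$, $(\mathit{Selfdual})$ and $(\mathit{Back})$ is fine, as is the within-cone uniqueness argument and the final remark that atomicity in $\mathbf{A}^{\delta}$ passes to $\mathbf{A}_{D}$), but it stops short of proving precisely the two points that constitute the new content of this lemma: that $s_{\nomi}^{D}\neq\bot$ for nominals $\nomi$ \emph{not} occurring in $\phi$, and that two atoms $a\leq s_{\nomi}\wedge D_{j}$ and $b\leq s_{\nomi}\wedge D_{j'}$ with $j\neq j'$ coincide. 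You flag both as obstacles and speculate about modifying the construction ($@$-closure of $D$, or a coherent choice of the roots $d^{i}_{0}$), but you do not close either gap, so as it stands this is not a proof of the lemma.

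The paper's route through both points is to use the $@$-operations themselves rather than the derived $(\mathit{Nom})$-inequality. For nonzero-ness: since $h(a)=a\wedge D$ is a homomorphism (Lemma~\ref{Lemma:@:h:homomorphism}), $@^{D}_{s^{D}_{\nomi}}s^{D}_{\nomi}=h(@_{s_{\nomi}}s_{\nomi})=h(\top)=D$, which is incompatible with $s^{D}_{\nomi}=\bot$ because $@^{D}_{s^{D}_{\nomi}}\bot=\bot$; no reachability of a $\nomi$-witness inside $D$ is invoked. For uniqueness: from $d^{j_{1}}_{0}\leq\Diamond^{n_{1}}(s_{\nomi}\wedge a)$ one applies $(\mathit{Intro})$ and then $(\mathit{Back})$ to get $d^{j_{1}}_{0}\leq\Diamond^{n_{1}}@_{s_{\nomi}}a\leq @_{s_{\nomi}}a$, and likewise $d^{j_{2}}_{0}\leq @_{s_{\nomi}}b$; the crucial feature is that $@_{s_{\nomi}}a$ and $@_{s_{\nomi}}b$ are single ``global'' elements of $\mathbf{A}^{\delta}$ not tied to any one cone, and the paper then argues $@_{s_{\nomi}}a=@_{s_{\nomi}}b=\top$, so that $\top=@_{s_{\nomi}}a\wedge @_{s_{\nomi}}b=@_{s_{\nomi}}(a\wedge b)=@_{s_{\nomi}}\bot=\bot$ (using $(K_{@})$ and $(\mathit{Selfdual})$), a contradiction. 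Thus the cross-cone communication is mediated by $@$ itself, with no coherence condition imposed on the roots $d^{i}_{0}$. I will add that your unease about this case is not groundless: the step $@_{s_{\nomi}}a=\top$ amounts, via Proposition~\ref{Lemma:@:BehavesCorrectly:ortho}, to $s_{\nomi}\leq a$, which is not obviously available at that stage of the argument, and your suggestion of choosing each $d^{i}_{0}$ as the unique atom $c$ with $d\leq @_{s_{\nomi_{i}}}c$ is a natural way to make the identification of witnesses across cones fully explicit. Nevertheless, a correct diagnosis of where the difficulty lies is not a proof, and the $@$-based bridging idea that the paper uses is absent from your write-up.
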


\begin{proof}
First, $s_{\nomi}^D \neq \bot$, for otherwise, since $\mathbf{A}_D$ validates the axiom (\emph{Ref}), $D = @^D_{s^D_{\nomi}}s^D_{\nomi} = @^D_{s^D_{\nomi}}\bot = \bot$, which is not possible. Now, let $a, b \in \mathit{At} \mathbf{A}^{\delta}$ such that $a \leq s^D_{\nomi}$ and $b \leq s^D_{\nomi}$. We want to show that $a = b$, so suppose that they are not equal for the sake of a contradiction. From $a \leq s^D_{\nomi}$ and $b \leq s^D_{\nomi}$, we have $a, b \leq s_{\nomi}$ and $a, b \leq D$. This means there are $n_1, n_2 \in \mathbb{N}$ and $0 \leq j_1, j_2 \leq m$ such that $a \leq (\Diamond^{-1})^{n_1}d_{0}^{j_1}$ and $b \leq (\Diamond^{-1})^{n_2}d_{0}^{j_2}$. Hence, by Lemma \ref{alD^-1sbiffblDsa: alDisblDsa}, $d_0^{j_1} \leq \Diamond^{n_1}a$ and $d_{0}^{j_2} \leq \Diamond^{n_2}b$. But since $a, b \leq s_{\nomi}$, $d_0^{j_1} \leq \Diamond^{n_1}(s_\nomi \wedge a)$ and $d_{0}^{j_2} \leq \Diamond^{n_2}(s_{\nomi} \wedge b)$. We thus have $d_0^{j_1} \leq \Diamond^{n_1}(s_\nomi \wedge a) \leq \Diamond^{n_1}@_{s_{\nomi}}a \leq @_{s_{\nomi}}a$
and
$d_0^{j_2} \leq \Diamond^{n_2}(s_\nomi \wedge b) \leq \Diamond^{n_2}@_{s_{\nomi}}b \leq @_{s_{\nomi}}b$, where the second and third inequalities in each case hold by axioms (\emph{Intro}) and (\emph{Back}), respectively.
This gives $@_{s_{\nomi}}a = \top$ and $@_{s_{\nomi}}b = \top$, for otherwise, $d_0^{j_1} = d_0^{j_2} = \bot$, a contradiction. Hence, $@_{s_{\nomi}}a \wedge @_{s_{\nomi}}b = \top$, which, by axioms ($K_@$) and (\emph{Selfdual}), means that $@_{s_{\nomi}}(a \wedge b) = \top$. But $a \neq b$, so $\bot = @_{s_{\nomi}}(a \wedge b) = \top$, which is a contradiction.
\end{proof}

\subsection{Algebraic completeness of $\mathbf{H}^{+}(@)\oplus\Sigma$}

The completeness result in this section in proven with a construction similar to that used in the previous subsection. As before, we first formulate and prove the main result, and afterwards give the lemmas needed in the proof.

\begin{theorem} \label{Thm:H+@:Comp}
For any set $\Sigma$ of $\mathcal{H}(@)$-formulas, the logic $\mathbf{H}^{+}(@)\oplus\Sigma$ is sound and complete with respect to the class of all permeated hybrid @-algebras which validate $\Sigma$. That is to say, $\vdash_{\mathbf{H}^{+}(@)\oplus\Sigma} \phi$ iff $\models_{\mathsf{PH@A}(\Sigma)} \phi \approx \top$.
\end{theorem}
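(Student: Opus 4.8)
\noindent\textit{Proof idea.}
The plan is to combine the device of the proof of Theorem~\ref{Thm:PHA:Comp} for $\mathbf{H}^{+}\oplus\Sigma$ (working from the rich maximal consistent set supplied by the non-orthodox rules, and reading permeation off its saturation) with the $@$-handling of the proof of Theorem~\ref{Thm:H@A:Comp}. Soundness is routine, checked axiom by axiom and rule by rule, with $(\mathit{Name}_{@})$ and $(\mathit{BG}_{@})$ validated by the two permeation conditions just as in the relational semantics. For completeness we argue contrapositively: assume $\nvdash_{\mathbf{H}^{+}(@)\oplus\Sigma}\phi$. In a language extended by a denumerable set $\mathsf{NOM}'$ of fresh nominals, invoke ten Cate's Lindenbaum-type construction for $\mathbf{H}^{+}(@)\oplus\Sigma$ to obtain a maximal consistent $\Gamma$ with $\neg\phi\in\Gamma$ such that (i) some nominal $\nomi_{0}$ lies in $\Gamma$, and (ii) for every nominal $\nomi$ and formula $\chi$, if $@_{\nomi}\Diamond\chi\in\Gamma$ then $@_{\nomi}\Diamond\nomj\in\Gamma$ and $@_{\nomj}\chi\in\Gamma$ for some nominal $\nomj$. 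Form the orthodox Lindenbaum--Tarski algebra $\mathbf{A}$ of $\mathbf{H}^{+}(@)\oplus\Sigma$ over $\mathsf{PROP}$ (nominals as constants, $@$ as a binary operator); then $\mathbf{A}\models(\mathbf{H}^{+}(@)\oplus\Sigma)^{\approx}$, the set $[\Gamma]$ is an ultrafilter (so every finite meet of its elements exceeds $\bot$), and $\mathbf{A},\nu\not\models\phi\approx\top$ for $\nu(p)=[p]$. Passing to $\mathbf{A}^{\delta}$, all $\mathbf{H}(@)$-axioms are Sahlqvist (fixing the nominal coordinate, $@$ plays the role of a unary diamond) hence canonical, and by compactness $\bigwedge[\Gamma]>\bot$, so there is an atom $d\leq\bigwedge[\Gamma]$ in $\mathbf{A}^{\delta}$; since $\nomi_{0}\in\Gamma$, also $d\leq s_{\nomi_{0}}$.

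Next I would build the element $D$. The axiom $(\mathit{ref})$ forces $s_{\nomi}\neq\bot$ in $\mathbf{A}^{\delta}$ for every $\nomi\in\mathsf{NOM}\cup\mathsf{NOM}'$ (otherwise $\top=@_{s_{\nomi}}s_{\nomi}=@_{s_{\nomi}}\bot=\bot$), so, as in the proof of Theorem~\ref{Thm:H@A:Comp} but now for \emph{all} nominals rather than just those occurring in $\phi$, choose an atom below each $s_{\nomi}$, take $d$ itself as the seed for $\nomi_{0}$, close every seed under $\Diamond^{-1}$, and let $D$ be the join of all resulting components. The argument of Lemma~\ref{Lemma:@:DLeqBoxD} still gives $D\leq\Box D$, so defining $\mathbf{A}_{D}$ with the operations of the proof of Theorem~\ref{Thm:H@A:Comp} (in particular $@^{D}_{s^{D}_{\nomj}}a$ taking only the values $D$ and $\bot$) and appealing to the $@$-counterparts of Lemmas~\ref{Lemma:@:A_D:closed}, \ref{Lemma:@:h:homomorphism} and \ref{Lemma:@:s^D_i:atoms}, the map $h\colon a\mapsto a\wedge D$ is a surjective homomorphism of orthodox interpretations, each $s^{D}_{\nomj}$ is an atom of $\mathbf{A}_{D}$, and $\mathbf{A}_{D}\models(\mathbf{H}^{+}(@)\oplus\Sigma)^{\approx}$. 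With $\nu_{D}(p)=h(\nu(p))$ one has $\widetilde{\nu}_{D}=h\circ\widetilde{\nu}$, and a bridge lemma in the style of Lemma~\ref{Lemma:gammainGamma:dleq} (for every $\mathcal{H}(@)$-formula $\gamma$: $\gamma\in\Gamma$ iff $d\leq\widetilde{\nu}(\gamma)$ iff $d\leq\widetilde{\nu}_{D}(\gamma)$) gives $d\leq\widetilde{\nu}_{D}(\neg\phi)$, so $\widetilde{\nu}_{D}(\phi)\neq D=\top^{D}$. Setting $\frak{A}_{D}=(\mathbf{A}^{-}_{D},X_{A_{D}})$ with $X_{A_{D}}=\{s^{D}_{\nomj}\mid\nomj\in\mathsf{NOM}\cup\mathsf{NOM}'\}$ yields a hybrid $@$-algebra validating $\Sigma^{\approx}$ on which $\phi$ is refuted by the assignment $\nu'_{D}$ extending $\nu_{D}$ with $\nu'_{D}(\nomj)=s^{D}_{\nomj}$; note that, since every $s^{D}_{\nomj}$ is already an atom, the product and case split of Theorems~\ref{Thm:HA:Comp} and \ref{Thm:PHA:Comp} are not needed here.

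The crux is to prove that $\frak{A}_{D}$ is permeated, i.e.\ the $@$-analogues of Lemmas~\ref{Lemma:leastm} and \ref{Lemma:A_D:permeated}, and the first of the two permeation clauses is where I expect the work to lie. The second clause is short: if $s^{D}_{\nomi}\leq\Diamond^{D}b$ with $b=\widetilde{\nu}_{D}(\psi)$ (surjectivity of $\widetilde{\nu}_{D}$), then $\widetilde{\nu}_{D}(@_{\nomi}\Diamond\psi)=\top^{D}$ by Proposition~\ref{Lemma:@:BehavesCorrectly:H@A}, so $@_{\nomi}\Diamond\psi\in\Gamma$ by the bridge lemma; saturation (ii) produces $\nomj$ with $@_{\nomi}\Diamond\nomj\in\Gamma$ and $@_{\nomj}\psi\in\Gamma$, which translate back to $s^{D}_{\nomj}\in X_{A_{D}}$, $s^{D}_{\nomj}\leq b$ and $s^{D}_{\nomi}\leq\Diamond^{D}s^{D}_{\nomj}$. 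For the first clause — every $\bot\neq b\in A_{D}$ lies above some member of $X_{A_{D}}$ — one has $b\leq D$, so $b\wedge(\Diamond^{-1})^{n}s^{D}_{\nomi}>\bot$ for some seed $s^{D}_{\nomi}$ and some $n$, whence $s^{D}_{\nomi}\leq\Diamond^{n}b$ by Lemma~\ref{inverse>bot:dleqdiamomd}; converting $\Diamond$ to $\Diamond^{D}$ via the $s^{D}_{\nomi}$-version of Lemma~\ref{dleqdiamond:dleqdiamondd} (using $D\leq(\Box^{D})^{n}D$ and the $\mathbf{A}_{D}$-version of Lemma~\ref{Lemma:DiamondBox:leq:Diamond}) gives $s^{D}_{\nomi}\leq(\Diamond^{D})^{n}b=\widetilde{\nu}_{D}(\Diamond^{n}\psi)$, hence $@_{\nomi}\Diamond^{n}\psi\in\Gamma$; applying saturation (ii) $n$ times down the string of $\Diamond$'s yields nominals $\nomj_{1},\dots,\nomj_{n}$ with $@_{\nomj_{n}}\psi\in\Gamma$, so $s^{D}_{\nomj_{n}}\leq b$ is the desired designated atom below $b$. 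The genuine effort is to package this iteration cleanly as the $@$-counterpart of Lemma~\ref{Lemma:leastm} and to check that the $\Diamond$-to-$\Diamond^{D}$ conversion lemmas survive the presence of $@$ unchanged; everything else is bookkeeping mirroring the proofs of Theorems~\ref{Thm:PHA:Comp} and \ref{Thm:H@A:Comp}.
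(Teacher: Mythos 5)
Your proposal is correct and follows essentially the same route as the paper's proof: the orthodox Lindenbaum--Tarski algebra and its canonical extension, the element $D$ built by closing under $\Diamond^{-1}$ the atom below $\bigwedge[\Gamma]$ together with atoms below nominal interpretations, the relativized algebra $\mathbf{A}_D$ with $@^D$ taking only the values $D$ and $\bot$, and permeation extracted from the saturation property of $\Gamma$ via the bridge lemma. The only (harmless) deviations are that you seed $D$ with atoms below \emph{all} nominal interpretations rather than only those of the nominals occurring in $\phi$ as the paper does, and that you unroll the induction of Lemma~\ref{Lemma:existence:atom} as $n$ successive applications of saturation rather than packaging it as a separate lemma; both variants go through unchanged.
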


\begin{proof}
Suppose $\nvdash_{\mathbf{H}^{+}(@)\oplus\Sigma} \phi$. Let $\mathsf{NOM}'$ be a denumerably infinite set of nominals disjoint from $\mathsf{NOM}$. We know from \cite[Lemma 7.25]{BdRV01} that $\neg \phi$ is contained in a $\mathbf{H}^{+}(@)\oplus\Sigma$-maximal consistent set of formulas $\Gamma$ in the language extended with \textsf{NOM}' such that
\begin{enumerate}[(i)]
\item $\Gamma$ contains at least one nominal, say $\nomi_0$, and
\item for each formula of the form $@_{\nomi}\Diamond\phi \in \Gamma$, there exists a nominal $\nomj$ such that $@_{\nomi}\Diamond\nomj \in \Gamma$ and $@_{\nomj}\phi \in \Gamma$.
\end{enumerate}

Now, consider the orthodox Lindenbaum-Tarski algebra of $\mathbf{H}^{+}(@)\oplus\Sigma$ over $\mathsf{PROP}$. Denote it by $\mathbf{A}$. In the usual way, $\mathbf{A} \models \mathbf{H}^{+}(@)\oplus\Sigma^{\approx}$, while $\mathbf{A}, \nu \not\models \phi \approx \top$, where $\nu$ is the natural map taking $p$ to $[p]$. Standard propositional reasoning shows that $[\Gamma] = \{[\gamma] \mid \gamma \in \Gamma \}$ is an ultrafilter of $\mathbf{A}$, and thus, by the finite meet property, $\bigwedge [\Gamma'] > \bot$ in $\mathbf{A}$ for every finite subset $\Gamma' \subseteq \Gamma$.

Next, consider the orthodox canonical extension $\mathbf{A}^{\delta}$ of $\mathbf{A}$. Then $\bigwedge [\Gamma] > \bot$ in $\mathbf{A}^{\delta}$, for otherwise the compactness of the embedding of $\mathbf{A}$ into $\mathbf{A}^{\delta}$ would yield a finite subset $\Gamma' \subseteq \Gamma$ such that $\bigwedge [\Gamma'] \leq \bot$ in $\mathbf{A}$.

Let $d$ in $At\mathbf{A}^{\delta}$ such that $d \leq \bigwedge [\Gamma]$ and denote it by $d^0_0$. Furthermore, let $s_{\nomi_1}, s_{\nomi_2}, \ldots, s_{\nomi_m}$ be the constant interpretations of the nominals occurring in $\phi$. Since $s_{\nomi_i} \neq \bot$ in $\mathbf{A}^{\delta}$ for each $1 \leq i \leq m$, there are atoms $d^1_0, d^2_0, \ldots, d^m_0$ in $\mathbf{A}^{\delta}$ such that $d^1_0 \leq s_{\nomi_1}, d^{2}_{0} \leq s_{\nomi_2}, \ldots, d^{m}_{0} \leq s_{\nomi_m}$. Now, using $d^0_0, d^1_0, d^2_0, \ldots, d^m_0$ define $D$ and $\mathbf{A}_{D}$ as in the proof of Theorem \ref{Thm:H@A:Comp}. In the same way as in Lemma \ref{Lemma:@:A_D:closed}, we can then show that $\mathbf{A}_{D}$ is an algebra, and furthermore, by Lemma \ref{Lemma:@:h:homomorphism}, we have that $\mathbf{A}_D \models \mathbf{H}^{+}(@)\Sigma^{\approx}$. To see that $\mathbf{A}_D \not\models \phi \approx \top$, consider the assignment $\nu_D$ defined as in the proof of Theorem \ref{Thm:H@A:Comp}. We then get that $\widetilde{\nu}_D(\bigwedge \Gamma') > \bot$ for every finite subset $\Gamma'$ of $\Gamma$. But $\{\neg\phi\}$ is a finite subset of $\Gamma$, so $\widetilde{\nu}_D(\phi) \neq \top$.

In the same way as in the proof of Lemma \ref{Lemma:@:s^D_i:atoms}, we can show that for each $\nomi \in \mathsf{NOM} \cup \mathsf{NOM}'$, $s^D_{\nomi}$ is an atom of $\mathbf{A}_D$. So let $\frak{A}_{D} = (\mathbf{A}^-_{D}, X_{A_D})$, where $\mathbf{A}^-_{D}$ is the reduct of $\mathbf{A}_{D}$ obtained by omitting the constant interpretations of nominals, and $X_{A_D} = \{s_{\nomi}^D \mid \nomi \in \mathsf{NOM} \cup \mathsf{NOM}'\}$. $X_{A_D}$ is thus clearly non-empty. Furthermore, since $\mathbf{H}^{+}(@)\oplus\Sigma$ is closed under (\emph{Sorted substitution}), it follows from the foregoing that $\frak{A}_{D} \models \mathbf{H}(@)^{+}\oplus\Sigma^{\approx}$. To see that $\frak{A}_D \not \models \phi \approx \top$, consider the assignment $\nu'_D$ which extends $\nu_D$ from $\mathsf{PROP}$ to $\mathsf{PROP} \cup \mathsf{NOM} \cup \mathsf{NOM}'$, obtained by simply setting $\nu'_D(\nomi) = s_{\nomi}^D$ for each $\nomi \in \mathsf{NOM} \cup \mathsf{NOM}'$. It is clear that $\widetilde{\nu}'_D(\psi) = \widetilde{\nu}_D(\psi)$ for all $\mathcal{H}(@)$-formulas $\psi$, and hence that $\widetilde{\nu}'_D(\phi) \neq \widetilde{\nu}'_D(\top)$. Finally, by Lemma \ref{Lemma:@:A_D:permeated}, $\frak{A}_D$ is permeated.
\end{proof}

In what follows $\Gamma$, $\mathbf{A}$, $\mathbf{A}^{\delta}$, $\mathbf{A}_D$, $\nu$, $\nu_D$ and $\frak{A}_D$ will be as in the proof of the above theorem. The following lemma is needed to prove that $\frak{A}_D$ is permeated.

\begin{lemma} \label{Lemma:existence:atom}
Let $a$ be an element of $\mathbf{A}_D$. Then $d \leq @_{s^D_{\nomi}}^{D}(\Diamond^{D})^{n}a$ implies there is a nominal $\nomj \in \mathsf{NOM} \cup \mathsf{NOM}'$ such that $s^{D}_{\nomj} \leq a$.
\end{lemma}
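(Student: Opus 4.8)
The plan is to induct on $n$, leaning on the fact that in $\mathbf{A}_D$ the satisfaction operator $@^D$ is two-valued. First I would record the working equivalence: for every $\nomj$ and every $e \in A_D$, $d \leq @^D_{s^D_{\nomj}}e$ if and only if $s^D_{\nomj} \leq e$. Indeed, $s^D_{\nomj}$ is an atom of $\mathbf{A}_D$ by Lemma \ref{Lemma:@:s^D_i:atoms}, and $e \vee^D \neg^D e = \top^D = D$, so $s^D_{\nomj}$ sits below exactly one of $e$ and $\neg^D e$; thus $@^D_{s^D_{\nomj}}e$ is $D$ when $s^D_{\nomj}\leq e$ and is $\bot$ otherwise, and since $d$ is an atom (hence $d>\bot$) the three conditions $d \leq @^D_{s^D_{\nomj}}e$, $@^D_{s^D_{\nomj}}e = D$, $s^D_{\nomj}\leq e$ are equivalent. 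I will also use that $\widetilde{\nu}_D$ maps the $\mathcal{H}(@)$-formulas \emph{onto} $A_D$ (it is $h(\cdot)=(\cdot)\wedge D$ composed with the surjection $\widetilde{\nu}$ onto $A$, and since nominals are interpreted as constants in $\mathbf{A}_D$, $\widetilde{\nu}_D(\gamma)=h(\widetilde{\nu}(\gamma))=[\gamma]\wedge D$ for \emph{all} $\mathcal{H}(@)$-formulas $\gamma$), together with the obvious analogue of Lemma \ref{Lemma:gammainGamma:dleq}, namely $\gamma \in \Gamma$ iff $d \leq \widetilde{\nu}(\gamma)$ iff $d \leq \widetilde{\nu}_D(\gamma)$, which is proved just as there from $d \leq \bigwedge[\Gamma]$ and the maximality of $\Gamma$.

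For $n=0$ the working equivalence immediately gives $s^D_{\nomi}\leq a$, so $\nomj:=\nomi$ works. For the step, assume the statement for $n=k$ uniformly in the element and the nominal, and suppose $d \leq @^D_{s^D_{\nomi}}(\Diamond^D)^{k+1}a$. Set $b:=(\Diamond^D)^k a$; then $b \in A_D$ by Lemma \ref{Lemma:@:A_D:closed}, and the working equivalence gives $s^D_{\nomi}\leq \Diamond^D b$. Pick a formula $\chi$ with $\widetilde{\nu}_D(\chi)=b$ (surjectivity); then $\widetilde{\nu}_D(@_{\nomi}\Diamond\chi)=@^D_{s^D_{\nomi}}\Diamond^D b = D = \top^D$, so $d \leq \widetilde{\nu}_D(@_{\nomi}\Diamond\chi)$, whence $@_{\nomi}\Diamond\chi \in \Gamma$ by the analogue of Lemma \ref{Lemma:gammainGamma:dleq}.

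Now I invoke the Henkin-style property (ii) of $\Gamma$ (secured by the rule ($\mathit{BG}_@$)): there is a nominal $\nomk \in \mathsf{NOM}\cup\mathsf{NOM}'$ with $@_{\nomi}\Diamond\nomk \in \Gamma$ and $@_{\nomk}\chi \in \Gamma$. From $@_{\nomk}\chi\in\Gamma$ and the analogue of Lemma \ref{Lemma:gammainGamma:dleq}, $d \leq \widetilde{\nu}_D(@_{\nomk}\chi)=@^D_{s^D_{\nomk}}b=@^D_{s^D_{\nomk}}(\Diamond^D)^k a$. Applying the induction hypothesis with $\nomk$ in place of $\nomi$ (and the same $a$) yields a nominal $\nomj \in \mathsf{NOM}\cup\mathsf{NOM}'$ with $s^D_{\nomj}\leq a$, as required; the other conjunct $@_{\nomi}\Diamond\nomk\in\Gamma$ is not needed here (it feeds the companion argument that $\frak{A}_D$ is permeated). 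The main thing to be careful about is not conceptual but a matter of bookkeeping: that $\widetilde{\nu}_D$ genuinely commutes with $@$ and $\Diamond$ (this is the homomorphism property of $h$, Lemma \ref{Lemma:@:h:homomorphism}), that the induction hypothesis is phrased uniformly so it can be re-applied to $\nomk$, and that $\mathbf{A}_D$ is a relativization rather than a subalgebra of $\mathbf{A}^\delta$, so all order comparisons are read inside $\mathbf{A}_D$ (equivalently inside $\mathbf{A}^\delta$, since the relevant elements all lie below $D$).
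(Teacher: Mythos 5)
Your proof is correct and follows essentially the same route as the paper's: induction on $n$, using the two-valuedness of $@^D$, the surjectivity of $\widetilde{\nu}_D$, the characterization $\gamma\in\Gamma$ iff $d\leq\widetilde{\nu}_D(\gamma)$, and the Henkin property (ii) of $\Gamma$ exactly once per inductive step. The only difference is organizational: the paper writes $(\Diamond^D)^{k+1}a=(\Diamond^D)^k(\Diamond^D a)$ and applies the inductive hypothesis to the element $\Diamond^D a$ \emph{before} invoking property (ii), whereas you peel off the outermost $\Diamond^D$, invoke property (ii) first to pass to a new nominal $\nomk$, and then apply the inductive hypothesis (correctly stated uniformly in the nominal) to the same element $a$.
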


\begin{proof}
The proof is by induction on $n$. For $n = 0$, suppose that $d \leq @_{s^D_{\nomi}}^{D}a$. This implies that $s^D_{\nomi} \leq a$, for if not, $d \leq @_{s^D_{\nomi}}^{D}a = \bot$, which is a contradiction. Hence, $\nomi$ can be taken as the desired nominal. Now, suppose that for every $a \in A_D$, the claim holds for $n = k$. For $n = k + 1$, assume $d \leq @^{D}_{s^D_{\nomi}}(\Diamond^{D})^{k + 1}a = @^{D}_{s^D_{\nomi}}(\Diamond^{D})^{k}\Diamond^D a$.
By the inductive hypothesis, we have a nominal $\nomk \in \mathsf{NOM} \cup \mathsf{NOM}'$ such that $s^{D}_{\nomk} \leq \Diamond^{D}a$. This measn that that $D = @^{D}_{s^D_{\nomk}}s^{D}_{\nomk} \leq @^{D}_{s^D_{\nomk}}\Diamond^{D}a$, and so $d \leq @^D_{s^D_{\nomk}}\Diamond^Da$. Now, since both $\nu$ and $h$ are surjective, there is a $\psi$ such that $d \leq @^D_{s^D_{\nomk}}\Diamond^D\widetilde{\nu}_D(\psi) = \widetilde{\nu}_D(@_{\nomk}\Diamond\psi)$. In the same way as in Lemma \ref{Lemma:gammainGamma:dleq}, we can show that for every $\mathcal{H}(@)$-formula $\gamma$, $\gamma \in \Gamma$ iff $d \leq \widetilde{\nu}_D(\gamma)$. But then $@_{\nomk}\Diamond\psi \in \Gamma$, which means there is a nominal $\nomj \in \mathsf{NOM} \cup \mathsf{NOM}'$ such that $@_{\nomk}\Diamond\nomj \in \Gamma$ and $@_{\nomj}\psi \in \Gamma$. Hence, $d \leq \widetilde{\nu}_{D}(@_{\nomj}\psi) = @^{D}_{s^D_{\nomj}}\widetilde{\nu}_{D}(\psi) = @^D_{s^D_{\nomj}}a$, and so $D = @^D_{s^D_{\nomi_0}}d \leq @_{s^D_{\nomi_0}}^{D}@^{D}_{s^D_{\nomj}}a \leq @^{D}_{s^D_{\nomj}}a$, where the last inequality follows from axiom (\emph{Agree}). Therefore, $@^{D}_{s^D_{\nomj}}a = D$, which implies that $s_{\nomj}^{D} \leq a$.
\end{proof}

\begin{lemma} \label{Lemma:@:A_D:permeated}
$\frak{A}_{D}$ is permeated.
\end{lemma}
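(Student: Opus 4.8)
The plan is to verify the two defining conditions of a permeated hybrid algebra for $\frak{A}_D$, reusing the bridge lemmas exactly as the earlier permeation proof (Lemma \ref{Lemma:A_D:permeated}) does, but now with the satisfaction operator doing the work that iterated diamonds did before. The key technical input will be Lemma \ref{Lemma:existence:atom}, together with the fact (already established in the proof of Theorem \ref{Thm:H+@:Comp}) that every $s^D_{\nomi}$ is an atom of $\mathbf{A}_D$, lying in $X_{A_D}$, and that $d \leq \widetilde{\nu}_D(\gamma)$ iff $\gamma \in \Gamma$ for all $\mathcal{H}(@)$-formulas $\gamma$. I will also use that $d$ is an atom of $\mathbf{A}^\delta$ with $d \leq D = \top^D$, so that $d$ is the designated atom $s^D_{\nomi_0}$ and $@^D_{s^D_{\nomi_0}} d = D$.

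For the \textbf{first condition}, let $b \in A_D$ with $b > \bot$. Since $b \leq D = \bigvee_{0\leq i \leq m}\bigvee_{n} (\Diamond^{-1})^n d^i_0$ and $b \neq \bot$, there are $0 \leq i \leq m$ and $n \in \mathbb{N}$ with $b \wedge (\Diamond^{-1})^n d^i_0 > \bot$; absorbing the choice of $d^i_0$, write this as $d^i_0 \leq \Diamond^n b$ via (an analogue of) Lemma \ref{inverse>bot:dleqdiamomd}, and then lift to $d^i_0 \leq (\Diamond^D)^n b$ via (an analogue of) Lemma \ref{dleqdiamond:dleqdiamondd}. Since $d^i_0 \leq s^D_{\nomi_{j(i)}}$ for the appropriate nominal $\nomi_{j(i)}$ occurring in $\phi$, we get $D = @^D_{s^D_{\nomi_{j(i)}}} d^i_0 \leq @^D_{s^D_{\nomi_{j(i)}}}(\Diamond^D)^n b$, hence $d \leq @^D_{s^D_{\nomi_{j(i)}}}(\Diamond^D)^n b$. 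Lemma \ref{Lemma:existence:atom} then yields a nominal $\nomj \in \mathsf{NOM}\cup\mathsf{NOM}'$ with $s^D_{\nomj} \leq b$, and $s^D_{\nomj} \in X_{A_D}$; this is exactly the first permeation condition. (One must check the diamond-inversion lemmas hold in the multi-seed setting, which is routine since each $d^i_0$ is an atom below $D$.)

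For the \textbf{second condition}, let $b \in A_D$ and $s^D_{\nomi} \in X_{A_D}$ with $s^D_{\nomi} \leq \Diamond^D b$. Then $D = @^D_{s^D_{\nomi}} s^D_{\nomi} \leq @^D_{s^D_{\nomi}}\Diamond^D b$, so $d \leq @^D_{s^D_{\nomi}}\Diamond^D b$. By surjectivity of $\nu$ and $h$, pick $\psi$ with $\widetilde{\nu}_D(\psi) = b$, so $d \leq @^D_{s^D_{\nomi}}\Diamond^D\widetilde{\nu}_D(\psi) = \widetilde{\nu}_D(@_{\nomi}\Diamond\psi)$, whence $@_{\nomi}\Diamond\psi \in \Gamma$. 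By condition (ii) on $\Gamma$, there is $\nomj$ with $@_{\nomi}\Diamond\nomj \in \Gamma$ and $@_{\nomj}\psi \in \Gamma$. Translating back: $d \leq \widetilde{\nu}_D(@_{\nomj}\psi) = @^D_{s^D_{\nomj}} b$, so $@^D_{s^D_{\nomj}} b = D$, i.e. $s^D_{\nomj} \leq b$; and $d \leq \widetilde{\nu}_D(@_{\nomi}\Diamond\nomj) = @^D_{s^D_{\nomi}}\Diamond^D s^D_{\nomj}$, so $@^D_{s^D_{\nomi}}\Diamond^D s^D_{\nomj} = D \neq \bot$, which by the behaviour of $@^D$ on atoms forces $s^D_{\nomi} \leq \Diamond^D s^D_{\nomj}$. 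Since $s^D_{\nomj} \in X_{A_D}$, this is precisely the second permeation condition.

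The \textbf{main obstacle} is the second condition, and specifically the need to pass cleanly between the algebraic side ($@^D_{s^D_{\nomi}}\Diamond^D b$, the operation $\Diamond^D$ on the quotient $\mathbf{A}_D$) and the syntactic side (the formula $@_{\nomi}\Diamond\psi$ and the Henkin/pasting property (ii) of $\Gamma$): one must make sure that $\widetilde{\nu}_D(@_{\nomi}\Diamond\psi)$ really equals $@^D_{s^D_{\nomi}}\Diamond^D\widetilde{\nu}_D(\psi)$, which rests on $h$ being a homomorphism that commutes with $@$ (Lemma \ref{Lemma:@:h:homomorphism}) and on $\nu'_D(\nomi) = s^D_{\nomi}$, and that the derived fact $d \leq \widetilde{\nu}_D(\gamma) \Leftrightarrow \gamma \in \Gamma$ (proved along the lines of Lemma \ref{Lemma:gammainGamma:dleq}) is available for the extended language. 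Once those bookkeeping points are in place, the argument is a direct transcription of the $\mathcal{H}$-case with $@$ replacing iterated $\Diamond$.
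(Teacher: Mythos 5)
Your proposal is correct and follows essentially the same route as the paper's proof: for the first condition you decompose $D$ into the $(\Diamond^{-1})^n d^i_0$'s, pull $d^i_0 \leq (\Diamond^D)^n b$ through the analogues of Lemmas \ref{inverse>bot:dleqdiamomd} and \ref{dleqdiamond:dleqdiamondd}, identify $d^i_0$ with the atom $s^D_{\nomi_{j(i)}}$, and invoke Lemma \ref{Lemma:existence:atom}; for the second you pass to a formula $\psi$ with $\widetilde{\nu}_D(\psi)=b$, use the Henkin property (ii) of $\Gamma$, and read off $s^D_{\nomj}\leq b$ and $s^D_{\nomi}\leq\Diamond^D s^D_{\nomj}$ from the definition of $@^D$. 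The bookkeeping points you flag (the homomorphism property of $h$, and the equivalence $\gamma\in\Gamma$ iff $d\leq\widetilde{\nu}_D(\gamma)$ for the extended language) are exactly the ones the paper discharges by reference to Lemmas \ref{Lemma:@:h:homomorphism} and \ref{Lemma:gammainGamma:dleq}.
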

\begin{proof}
To prove the first condition, let $b \in \mathbf{A}_{D}$ such that $b > \bot$. Then
\[
b \wedge \bigvee_{0 \leq i \leq m}D_i > \bot,
\]
which means there is a $0 \leq j \leq m$ such that $b \wedge D_j > \bot$. Hence,
\[
b \wedge \bigvee_{n \in \mathbb{N}}(\Diamond^{-1})^nd^j_0 > \bot,
\]
and so $b \wedge (\Diamond^{-1})^{n_j}d^j_{0} > \bot$ for some $n_j \in \mathbb{N}$. We thus know from Lemma \ref{inverse>bot:dleqdiamomd} that $d^j_0 \leq \Diamond^{n_j}b$, and therefore $d^j_0 \leq (\Diamond^D)^{n_j}b$ (the proof of this is similar to that of Lemma \ref{dleqdiamond:dleqdiamondd}). Now, by construction, $d^j_0 \leq s_{\nomi_j}$, and hence $d^j_0 \leq s^D_{\nomi_j}$. But both $d^j_0$ and $s^D_{\nomi_j}$ are atoms, so $d^j_0 = s^D_{\nomi_j}$. Hence, $D = @^D_{s^D_{\nomi_j}}s^D_{\nomi_j} \leq @^D_{s^D_{\nomi_j}}(\Diamond^D)^{n_j}b$, which means that $d \leq @^D_{s^D_{\nomi_j}}\Diamond^{n_j}b$. It thus follows from Lemma \ref{Lemma:existence:atom} that there is a nominal $\nomj$ such that $s^D_{\nomj} \in X_{A_D}$ and $s^D_{\nomj} \leq b$.

For the second condition, let $s^D_{\nomi} \in X_{A_D}$ and $b \in A_D$ such that $s^D_{\nomi} \leq \Diamond^Db$. Then we have $D = @_{s_{\nomi}^{D}}^{D}\Diamond^{D}b$. By the surjectivity of $\nu$ and $h$ there is a formula $\psi$ such that $b = \widetilde{\nu}_{D}(\psi)$, and hence $d \leq D = @_{s^D_{\nomi}}^{D}\Diamond^{D}b = @_{s^D_{\nomi}}^{D}\Diamond^{D}\widetilde{\nu}_{D}(\psi) = \widetilde{\nu}_{D}(@_{\nomi}\Diamond\psi)$. This means that $@_{\nomi}\Diamond\psi \in \Gamma$, and therefore there is a nominal $\nomj \in \mathsf{NOM} \cup \mathsf{NOM}'$ such that $@_{\nomi}\Diamond\nomj \in \Gamma$ and $@_{\nomj}\psi \in \Gamma$. Hence, $d \leq \widetilde{\nu}_{D}(@_\nomi\Diamond\nomj) = @_{s^D_{\nomi}}^{D}\Diamond^{D}s_{\nomj}^{D}$ and $d \leq \widetilde{\nu}_{D}(@_{\nomj}\psi) = @_{s^D_{\nomj}}^{D}b$. It follows from the definition of $@^D$ that $s_{\nomj}^{D} \leq b$ and $s_{\nomi}^{D} \leq \Diamond^{D}s_{\nomj}^{D}$, as desired.
\end{proof}

\section{Conclusion}\label{sec:Conc}

We introduced the notions of hybrid algebras and hybrid $@$-algebras together with their permeated subclasses. We considered some basic truth preserving operations for these structures and explored their duality with two-sorted general frames. We proved general completeness results for axiomatic extensions of the basic hybrid logics with respect to classes of hybrid algebras.

The availability of this semantics, so close to the familiar algebraic semantics for modal logic, has proven extremely useful to the authors in their study of hybrid logic. It has enabled us to adapt existing algebraic techniques for modal logic to obtain new result for hybrid logics, including Sahlqvist-type theorems \cite{ConRob2015} and finite model properties \cite{ConRobBull, Robinson:PhD:Thesis}.

In this paper we have considered only two hybrid languages, and have not mentioned languages containing e.g. the universal modality of the `down-arrow' binder.  In is in fact easy to extend the results obtained to languages like $\mathcal{H}(\mathsf{E})$ with the universal operator, using similar constructions and ideas to the one we have employed. This is worked out in full detail in \cite{Robinson:PhD:Thesis}. As already mentioned, the `down-arrow' binder seems to require a very different treatment (see \cite{Litak06:relmics}).

\bibliographystyle{alpha}
\bibliography{HyComp}

\end{document}